\newtheorem{definition}{Definition}
\newtheorem{lemma}{Lemma}
\newtheorem{theorem}{Theorem}
\newtheorem{proposition}{Proposition}
\newtheorem{remark}{Remark}[section]
\newcommand{\e}{\mathrm{e}}
\renewcommand{\i}{\mathrm{i}}
\renewcommand{\d}{\mathrm{d}}
\title{Nekhoroshev type stability for non-local semilinear Schr\"odinger equations }
\date{}
\author[1]{Bingqi Yu}
\author[,1,2]{Yong Li\thanks{Corresponding author}}
\affil[1]{School of Mathematics, Jilin University, Changchun 130012, People’s Republic of China.}
\affil[2]{Center for Mathematics and Interdisciplinary Sciences,	Northeast Normal University, Changchun 130024, People’s Republic of China.}
\begin{document}
	
	\maketitle
	\footnote{E-mail address: \url{yubq23@mails.jlu.edu.cn}(B. Yu),  \url{liyong@jlu.edu.cn}(Y. Li)}
	
	\begin{abstract}
	 This paper investigates the Nekhoroshev-type stability of solutions with ultra-differentiable regularity in Schr\"odinger equations involving non-local nonlinear terms, using the method of rational normal forms.
	 We establish the first rigorous results on logarithmic ultra-differentiable regularity for infinite-dimensional Hamiltonian systems without external parameters.
	 Under Gevrey class regularity assumptions, we achieve the stability times matching Bourgain's conjectured optimal stability time in \cite{B04}. Furthermore, we introduce a novel global vector field norm adapted to the rational normal form framework. This norm eliminates the need for degree tracking during the iteration process, thereby enabling a unified treatment of nonlinear terms.

	\end{abstract}
	
	\noindent\textbf{Keywords}: Nekhoroshev stability, non-local Schr\"odinger equation, rational normal form, ultra-differential regularity 
	\\
	\noindent\textbf{MSC codes}: 35Q55, 37K45, 37J40

	\tableofcontents
	\section{Introduction}
	\subsection{Main Results}
	In this paper, we consider the following non-local semilinear Schr\"odinger equation:
	\begin{equation}\label{main}
		\i u_t(x,t)+\Delta u(x,t)+u(x,t)\int_{\mathbb{T}} K(x-y)|u(y,t)|^2\d y=0.
	\end{equation}
	The initial data is given as $u(x,0)=u_0$. The equation is defined on $(x,t)\in\mathbb{T}^{\mathsf{d}}\times\mathbb{R}$.
    For any function $u(x)$ in $L^2$, we define its Fourier coefficients as follows:
    
    $$u=\sum_{k\in\mathbb{Z}}u_k\e^{\i kx},\quad u_k=\int_{\mathbb{T}^{\mathsf{d}}}u(x)\e^{-\i kx} \d x.$$
    We consider two classes of function spaces:  the Gevrey space:
    $$W_{s,\mathfrak{g}}^{G}=\{u\mid\Vert u\Vert_s:=\sum_{j\in\mathbb{Z}}|u_j|^2\e^{2s|j|^{\mathfrak{g}}}<\infty\},$$ and another class of spaces, the logarithmic ultra-differentiable functions, which are defined as follows:
	$$W_{s,\theta}^{U}=\{u\mid\Vert u\Vert_s:=\sum_{j\in\mathbb{Z}}|u_j|^2\e^{2s\ln|j|^{\theta}}<\infty\}.$$

    The parameters satisfy $s > 0$, $0 < \mathfrak{g} < 1$, and $\theta > 0$. Functions in these spaces exhibit regularity that is strictly intermediate between smooth and analytic.
    In the sequel, we denote either of these spaces by $W_s$.
	
	For a sufficiently large positive integer \( M \), we define the truncated space as:
	$$W_s^M = \{u \in W_s \mid u_k = 0 \text{ for } |k| \geq M \}.$$
	The space \( W_s^M \) consists of functions in \( W_s \) whose Fourier coefficients vanish for all \( |k| \geq M \), providing a finite-dimensional approximation of \( W_s \).
	The associated projection is defined as:
	$$\Pi^M : W_s \to W_s^M, \quad \Pi^M((u_j)_{j \in \mathbb{Z}}) = (u_j)_{|j| < M}.$$
	This projection extracts the Fourier modes corresponding to indices \( |j| < M \), effectively reducing the infinite-dimensional space \( W_s \) to its finite-dimensional subspace \( W_s^M \).
	Let \( meas(\cdot) \) denote the Lebesgue measure on the finite-dimensional subspace \( W^M_s \).

	 The kernel \( K \) in the non-local interaction term \( K * |u|^2 \) is characterized by its Fourier coefficients:
	 $$K(x) = \sum_{k \in \mathbb{Z}} K_k \e^{\i kx}, \quad K_k = \int_{\mathbb{T}^{\mathsf{d}}} K(x) \e^{-\i kx} \d x.$$
	 Here, \( K_k \) represents the \( k \)-th Fourier mode of \( K(x) \).
	 
	We consider two physically significant cases for the kernel \( K \):
	\[K_k = \frac{1}{|k|^p} \quad \text{for } k \neq 0, \quad K_0= 0,\]and\[	K_k = \e^{-|k|^{\beta}},\]
	where \( p > 0 \) and \( \beta > 0 \). The first case arises in models where the interaction weakens with distance in Fourier space, while the second describes kernels with exponentially decaying modes.
	
	The main theorems for the four possible combinations of \( K_k \)  and the function spaces \( W_s^G \) and \( W_s^U \) are stated below.

	 \begin{theorem}(Main Theorem 1)\label{thm1}
	 	Let the kernel $K_k$ be given as $K_k=\frac{1}{|k|^p},$ for $p\in\mathbb{Z}^+$ and $|k|\neq0$, with $K_0=0.$ Suppose the initial data satisfies  $u(0)\in W_s= W_{s,\mathfrak{g}}^G,s> s_0,0<\mathfrak{g}<1.$ There exists a set $\mathfrak{R}_{\gamma}\subset W_s^M$ and a threshold $r_0$ such that, for $r<r_0$, the solution $u(t)$ of equation \eqref{main} with initial data $u(0)\in B_s(r)\setminus(\Pi^M)^{-1}\mathfrak{R}_{\gamma}$, satisfies
	 	$$\Vert u(t)\Vert_s\leq2\Vert u(0)\Vert_s$$ for
	 	$$|t|\leq\exp(C_{gp}\frac{|\ln r|^2}{\ln|\ln r|}).$$
	 	
	 	Furthermore, the Lebesgue measure satisfies the estimate:
	 	$$\text{meas}(B_s^M(r) \cap \mathfrak{R}_{\gamma}) \leq r^a \, \text{meas}(B_s^M(r)),$$
	 	for any constant \( a < \frac{2}{5} \).
	 \end{theorem}

	 	This result achieves the stability time predicted by Bourgain in \cite{B04}, which is of the order $\exp(c \frac{(\ln r)^2}{\ln |\ln r|} )$. Stability times of this magnitude were previously established for finite-range coupling systems in \cite{B88}. Furthermore, we obtain a sharper measure estimate $\varepsilon^{\frac{2}{5}}$ than previous results, detail compares can be seen in Section \ref{sec:con}.
	 	
	 \begin{theorem}(Main Theorem 2)\label{thm2}
	 	Let the kernel $K_k$ be given as $K_k=\e^{-|k|^{\beta}}$ with $\beta\geq1.$ Suppose the initial data satisfies $u(0)\in W_s= W_{s,\mathfrak{g}}^G,s> s_0,0<\mathfrak{g}<1.$ There exists a set $\mathfrak{R}_{\gamma}\subset W_s^M$ and a threshold $r_0$ such that for $r<r_0$ and the solution $u(t)$ of equation \eqref{main} with initial data $u(0)\in B_s(r)\setminus(\Pi^M)^{-1}\mathfrak{R}_{\gamma}$  satisfies
	 	$$\Vert u(t)\Vert_s\leq2\Vert u(0)\Vert_s$$ for
	 	
	 	$$|t|\leq\exp(C_{gb}\frac{|\ln r|^2}{\ln|\ln r|}).$$
	 	Furthermore, $$\text{meas}(B_s^M(r)\cap\mathfrak{R}_{\gamma})\leq r^a\text{meas}(B_s^M(r)), $$
	 	for any constant $a<\frac{2}{5}$.	
	 \end{theorem}
	 
	 \begin{theorem}(Main Theorem 3)\label{thm3}
	 	Let the kernel \( K_k \) be given as \( K_k = \frac{1}{|k|^p} \) for \( p \in \mathbb{N} \) and \( |k| \neq 0 \), with \( K_0 = 0 \). Suppose the initial data satisfies \( u(0) \in W_s = W_{s,\theta}^U \), where \( s > s_0 \) and \( \theta > 1 \). 
	 	
	 	There exists a set \( \mathfrak{R}_{\gamma} \subset W_s^M \) and a threshold \( r_0 \) such that, for \( r < r_0 \), the solution \( u(t) \) of equation \eqref{main} with initial data \( u(0) \in B_s(r) \setminus (\Pi^M)^{-1} \mathfrak{R}_{\gamma} \), satisfies
	 	$$\Vert u(t) \Vert_s \leq 2 \Vert u(0) \Vert_s \quad \text{for} \quad |t| \leq \exp \left( C_{\theta p} |\ln r|^{\frac{2\theta}{\theta+1}} \right).$$
	 	
	 	Furthermore, the Lebesgue measure satisfies the estimate:
	 	$$\text{meas}(B_s^M(r) \cap \mathfrak{R}_{\gamma}) \leq r^a \, \text{meas}(B_s^M(r)),$$
	 	for any constant \( a < \frac{2}{5} \).
	 \end{theorem}

	 \begin{theorem}(Main Theorem 4)\label{thm4}
	 	Let the kernel \(K_k\) be given as $K_k=\e^{-|k|^{\beta}}$, and $\beta\geq1$. Suppose the initial data satisfies $u(0)\in W_{s,\theta}^U,s> s_0,\theta>1$. There exists a set $\mathfrak{R}_{\gamma}\subset W_s^M$ and a threshold $r_0$ such that for $r<r_0$ and the solution $u(t)$ of equation \eqref{main} with initial data $u(0)\in B_s(r)\setminus(\Pi^M)^{-1}\mathfrak{R}_{\gamma}$  satisfies
	 	$$\Vert u(t)\Vert_s\leq2\Vert u(0)\Vert_s$$ for
	 	$$|t|\leq\exp(C_{\theta b}|\ln r|^{\frac{2\theta}{\theta+1}}).$$
	 	Furthermore, $$\text{meas}(B_s^M(r)\cap\mathfrak{R}_{\gamma})\leq r^a\text{meas}(B_s^M(r)), $$
	 	where $a<\frac{2}{5}.$	
	 \end{theorem}

	\subsection{Nekhoroshev Stability in Infinite Dimensions}
	The classic Nekhoroshev theorem provides exponential stability times for all initial data within a domain in finite-dimensional Hamiltonian systems subject to small analytic perturbations. This result, originally established by Nekhoroshev \cite{N77}, represents a cornerstone in stability theory for Hamiltonian systems. This represents a significant qualitative improvement over the polynomial stability times typically obtained via averaging methods, which generally exhibit slower decay rates for perturbative systems. Crucially, the theorem provides a framework for addressing questions such as the long-term stability of the solar system. It offers effective stability guarantees even for initial conditions lying outside the strictly non-resonant sets assumed in KAM theory. Subsequently, the Nekhoroshev theorem has also been extensively generalized to more general finite-dimensional Hamiltonian systems. We do not attempt to comprehensively enumerate the following results here \cite{B99,B16,B20,YL25}.
	
	In recent years, there has been significant interest in extending Nekhoroshev stability to infinite-dimensional Hamiltonian systems. This shift is motivated by the relevance of such systems to many important physical problems, including partial differential equations with Hamiltonian structures. In addition, there are also many results that extend the KAM theory to infinite-dimensional Hamiltonian systems, namely Hamiltonian PDEs, such as in the literature \cite{KP96,B98,BBM14}. However, extending the classical theory to this setting confronts a significant mathematical challenge: the density of resonances among infinitely many frequencies, which complicates the analysis of stability. Substantial progress has been made in analyzing stability times under various regularity assumptions. Specifically, the dependence of stability times on differentiability, Gevrey regularity, and ultra-differentiability has been extensively studied. For instance, finite differentiability yields stability times of polynomial order in \( \frac{1}{\varepsilon} \), as demonstrated in \cite{BDGS07,BFM24,BFG20,BG24,BMP19,BMP20,FM23}. For systems with Gevrey regularity, stability times improve significantly to the order \( \exp(|\ln \varepsilon|^c) \) for some \( c > 0 \), as demonstrated in \cite{FG13,CLW24}. 
    Recent results also address ultra-differentiable  regularity. For most of these equations, the analysis crucially relies on the introduction of infinitely many external parameters, such as those arising from convolution potentials or inherent non-resonance properties. However, these results establish stability only for a  set of specific non-resonant frequencies of large measure and do not resolve the stability question for the original equation itself.	
	
	\subsection{Internal Parameter Results for Infinite-Dimensional Nekhoroshev Stability}
	Addressing the stability of the original equation without modification requires overcoming the intrinsic problem of resonances in the system. At this point, traditional canonical techniques cannot be applied. In \cite{FM24}, effective stability in the case of complete resonance was obtained using  para-differential techniques and suitable tame estimates. Another effective approach to addressing this issue involves extracting parameters through frequency modulation of the nonlinear integrable component.
	In this framework, the amplitudes of the initial data themselves are utilized as internal parameters, a concept first proposed in \cite{B00}.
    In contrast to external parameters, internal parameters are typically much smaller in magnitude. This necessitates the formulation of more refined non-resonance conditions and requires make estimates for the measure of the resulting non-resonant set, accounting for the comparatively small scale of internal parameters. To address this, we employ the method of rational normal forms. This approach deviates from standard normalization by permitting terms that explicitly involve the solution \( u \) in the denominators of the homological equations, rather than relying exclusively on coefficient matching. This technique, originally introduced in \cite{BFG20b}, has since been applied to a range of problems involving infinite-dimensional Hamiltonian systems, as demonstrated in \cite{BG21,LX24}.
    Although studies concerning the regularity requirements in this context remain limited, results analogous to those in the external parameter case have been established. Polynomial stability times \( (\frac{1}{\varepsilon})^s \) have been proven for Sobolev spaces (finite differentiability) in \cite{BFG20b,BG21}, while sub-exponential stability times \( \exp(|\ln \varepsilon|^\alpha) \) have been obtained under Gevrey regularity in \cite{BCGW24,LX24}.

	\subsection{Non-local Semilinear Schr\"odinger Equation}
	Equation \eqref{main} models non-local interactions characterized by varying degrees of spatial concentration and has broad applications. The physical nature of the interaction is dictated by the decay rate of the Fourier coefficients of the convolution kernel.

	\textbf{Polynomial Decay in Frequency Space}
	This case corresponds to long-range algebraic interactions in the spatial domain, where the interaction strength decays polynomially with distance. A notable example is the Schrödinger-Newton (or Schr\"odinger-Poisson) system. Specifically, for \( p = 2 \) in three dimensions, the convolution kernel corresponds to the Newtonian gravitational potential, which is extensively used to model self-gravitating quantum matter, such as boson stars and semi-classical gravity systems \cite{P95}.  
	
	In the context of two-dimensional quantum materials, such as graphene or 2D electron gases, the Coulomb interaction potential \( V(x) = 1/|x| \) exhibits a similarly long-range behavior. In frequency space, this potential transforms into \( \hat{V}_k \sim \frac{1}{|k|} \), highlighting the algebraic decay of interactions \cite{CGPNG09,KUP12}.

	\textbf{Exponential Decay in Frequency Space}
	
	The exponential decay in frequency space indicates that the spatial kernel functions as a highly smoothing operator, effectively suppressing high-frequency components while enhancing the regularity of the solution. For example, thermal nonlinear optical media are predominantly described by diffusion equations, which produce Yukawa-type response functions. In the regime of strong non-locality, however, the response function can be well-approximated by a Gaussian profile \cite{RCMSC05}. This Gaussian approximation is consistent with the exponential decay assumption adopted in our model. Specifically, for such highly non-local media, the kernel in frequency space typically takes the form \( K_k \sim \e^{-k^2} \), as demonstrated in \cite{KBRW01}.

	\subsection{Article Structure}
	The structure of this paper is outlined as follows. Section \ref{sec:con} demonstrates the main contributions of this article. Section \ref{sec:set} introduces the main notation and functional setting that will be used consistently throughout the paper.
	Section \ref{sec:res} consists of two subsections. Subsection \ref{sec:pol} introduces the class of polynomials defined on the space from Section \ref{sec:set}, and Subsection \ref{sec:nfl} establishes a key lemma regarding their transformation into resonant normal forms. Section \ref{sec:tru} examines the properties of truncated polynomials on finite-dimensional subspaces. This analysis lays the foundation for proving the finite-dimensional fractional normal form lemma in the following section.
	Section \ref{sec:int} is composed of two subsections. Subsection \ref{sec:fre} introduces a norm defined for fractional expressions and establishes its fundamental operational properties, which will facilitate its application in subsequent proofs. Subsection \ref{sec:infl} demonstrates that truncated polynomials can be transformed into an integrable fractional normal form with a remainder term. Section \ref{sec:sta} establishes stability estimates through the bootstrap method, providing crucial bounds necessary for the subsequent analysis. Section \ref{sec:mea} derives measure estimates for the set of initial data that satisfy the required non-resonance conditions. These estimates demonstrate that the stability results remain valid for a set of initial values of large measure. Finally, Section \ref{sec:len} focuses on the explicit calculation of the stability time exponents, highlighting their role in quantifying the timescales of stability in the system.

	\section{Main Contributions}\label{sec:con}
	This paper examines Nekhoroshev-type stability for Schrödinger equations, focusing on systems without external parameters and incorporating various types of nonlinear non-local terms. We derive stability times within Gevrey regularity classes and logarithmic ultra-differentiable classes, thereby extending previous results to a broader class of systems with more general non-local nonlinearities.

    \textbf{Vector Field Norm for Fractional Normal Form}  
    
A key methodological contribution of this work is the introduction of a novel vector field norm, specifically designed to streamline the rational normal form framework and address challenges associated with the treatment of rational nonlinearities.  

	In earlier studies on rational normal forms (e.g., \cite{BFG20,BFG20b,BG21,BCGW24}), the estimates were primarily conducted using coefficient-wise norms. For instance, the commonly employed norm  
	\[ 
	\Vert P\Vert = \sup_{J} |P_J|, \quad P = \sum_{J} P_J u^J,  
	\]  
	relied on monitoring the coefficients \( P_J \), which correspond to individual monomials \( u^J \). This traditional coefficient-based approach necessitated meticulous tracking of the polynomial degrees of both numerators and denominators at each iterative step of solving homological equations, often leading to highly intricate combinatorial estimates.  
	
	By contrast, the vector field norm introduced in this paper directly applies to vector fields, thereby circumventing the need for detailed tracking of polynomial degrees in both numerators and denominators. This not only unifies the treatment of polynomial and rational nonlinearities but also significantly simplifies and accelerates the iterative proof of the normal form lemma. Moreover, it reduces the combinatorial complexity inherent to traditional methods, offering a more efficient and streamlined framework.

    \textbf{Improved Measure Estimates for Initial Data}
    
    For systems with internal parameters, the initial values act as critical parameters for tuning the non-resonance frequencies. As a result, the stability conclusions may not hold universally for all initial values. However, it is possible to obtain results that are valid for the vast majority of initial values in a measure-theoretic sense. Therefore, precise measure estimates on the set of initial values where the conclusions hold are essential for evaluating the stability of systems with internal parameters.
    
    In this paper, we provide sharper measure estimates for favorable nonlinear terms. Specifically, in all four scenarios considered, the measure of the resonant set is bounded by 
    \(    \varepsilon^{\frac{2}{5}}    \).  This result improves upon previous estimates, such as   \(  \varepsilon^{\frac{1}{3}}\) for the NLS equation in \cite{BFG20b},    \(    \varepsilon^{\frac{1}{35}}
    \) for the KdV and BO equations in \cite{BG21}, and 
    \(\varepsilon^{\frac{1}{6}}\) for the Schrödinger-Poisson equation in \cite{BCGW24}.
    
    \textbf{Internal Parameters in Ultra-Differentiable Classes}
    
    A key contribution of our work lies in addressing the stability problem in logarithmic ultra-differentiable regularity---a concept introduced by Z. Hani---within the challenging framework of internal parameters.
    
    While prior studies on systems with internal parameters have primarily concentrated on classical Gevrey or analytic classes, recent progress in \cite{CCMW22} has expanded this analysis to ultra-differentiable stability. These studies, however, relied on external parameters and achieved stability timescales of order 
    \[
    T_r \sim \exp(|\ln r|^{7/6}) \quad \text{for } \theta = 2.
    \]
    
    To our knowledge, no previous work has investigated the impact of this regularity on stability times in cases where frequency modulation depends on initial data. Addressing this gap, we establish new stability results and achieve a lifespan of order 
    \[
    \exp(|\ln r|^{\frac{2\theta}{\theta+1}})
    \]
    for general 
    \(\theta > 1\). 
    
    Our main contribution lies in developing stability results within the context of internal parameters, addressing previously unresolved questions in the literature. Further details are provided in Theorems~\ref{thm3} and~\ref{thm4}.

    \section{Setting}\label{sec:set}
    Equation \eqref{main} can be formalized as an infinite-dimensional Hamiltonian system on the function space \( H^1(\mathbb{T}^{\mathsf{d}}) \), equipped with the standard symplectic form:  
    $$\Omega(u, v) = \text{Im} \int_{\mathbb{T}^{\mathsf{d}}} u(x)\overline{v(x)}\d x,$$  
    where \( \text{Im} \) denotes the imaginary part. The associated Hamiltonian is expressed as:  
    \begin{equation*}
    	H(u) = \int_{\mathbb{T}^{\mathsf{d}}}|\nabla u(x)|^2\d x + \iint_{\mathbb{T}^{\mathsf{d}}\times\mathbb{T}^{\mathsf{d}}}|u(x)|^2 K(x-y) |u(y)|^2 \d y \d x,
    \end{equation*}  
    where \( K(x-y) \) represents the non-local kernel.
    
    To rewrite the original equation as a Hamiltonian system on a sequence space, we introduce an indexing scheme. Specifically, the index set is defined as \( \mathcal{Z} = \mathbb{Z}^{\mathsf{d}} \times \{-1, 1\} \). For \( J = (j, \sigma) \in \mathcal{Z} \) and a constant \( c > 0, \) we define:  
    $$|J|^2 := |j|^2 = \sum_{l=1}^{\mathsf{d}} |j_l|^2, \quad \langle j \rangle = \max\{|j|, c\}.$$  
    
    By taking the Fourier transform of \( u \) in the spatial variable \( x \), we have:  
    $$u = \sum_{k \in \mathbb{Z}} u_k \e^{\i kx}, \quad u_k = \int_{\mathbb{T}^{\mathsf{d}}} u(x) \e^{-\i kx} \d x,$$  
    where \( u_k \) are the Fourier coefficients. To simplify notation, we further denote \( u_k = u_{(k,+)} \) and \( \bar{u}_k = u_{(k,-)} \). 
    
    Using this Fourier representation, the Hamiltonian can be rewritten as:  
    \begin{equation}\label{Hamiltonian}
    	H = \sum_{k \in \mathbb{Z}} k^2 |u_k|^2 + \frac{1}{2} \sum_{k_1+k_2=k_3+k_4} K_{k_1-k_3} u_{(k_1,+)} u_{(k_2,+)} {u}_{(k_3,-)} {u}_{(k_4,-)} := H_0 + K,
    \end{equation}  
    where \( H_0 = \sum_{k \in \mathbb{Z}} k^2 |u_k|^2 \) represents the linear part, and \( K \) represents the nonlinear interaction terms.

    For monomials of degree \( d \), given by \( M=\prod_{l=1}^{d}u_{J_l} \) where \( J_l=(j_l,\sigma_l) \), we define its multi-index as \( \mathcal{J}=(J_1,\dots,J_d) \). The momentum indicator of a multi-index \( \mathcal{J} \) is given by:  
    $$\mathcal{M}_d(\mathcal{J}) = \sum_{l=1}^{d} \sigma_l j_l.$$  
    This indicator plays a role in expressing the momentum conservation condition.
    
    Our analysis will primarily focus on the monomials and polynomials whose multi-indices belong to the following set:  
    $$\mathcal{I}_d = \{\mathcal{J} \in \mathcal{Z}^d \mid \mathcal{M}_d(\mathcal{J}) = 0\},$$  
    which corresponds to the momentum conservation condition. 
    
    To quantify the regularity of solutions in the context of our stability analysis, we introduce the following weighted Banach space:  
    $$W_s = \{ u = (u_J)_{J \in \mathcal{Z}}, u_J \in \mathbb{C} \mid \Vert u \Vert_s := \sum_{j \in \mathcal{Z}} |u_j|^2 \e^{2s f(\langle j \rangle)} < \infty \}.$$  
    Here, the weight \( \e^{2s f(\langle j \rangle)} \) controls the growth of Fourier coefficients \( u_J \) as \(\langle j \rangle\) increases, and \( f \) is a monotonic function that we specify below.
    
	A function \( f \) is said to belong to the class \( \mathcal{F} \) if it satisfies the following conditions:  
	\begin{enumerate}	
		\item \( f : [c,+\infty) \to \mathbb{R}^+ \), where \( c > 0 \) is a fixed constant;  
		\item \( f \) is a monotonically increasing function that tends to \( +\infty \) as its argument grows;  
		\item There exists a constant \( C_f < 1 \) such that  
		$$f\bigg(\sum_{l=1}^d x_l\bigg) \leq f(x_m) + C_f \sum_{l \neq m} f(x_l),$$  
		where \( x_m = \max\{x_1,\dots,x_d\} \) for all \( x_l \geq c \).  
	\end{enumerate}
	This class \( \mathcal{F} \) encapsulates functions that grow sufficiently fast and satisfy a quasi-additivity property essential for controlling nonlinear interactions in stability estimates.
	
     Two typical weight functions determine spaces of infinitely differentiable functions that are not necessarily analytic, such as the Gevrey class and the logarithmic ultra-differentiable function class.  
     Specifically, for \( f(x) = x^\theta \) with \( 0 < \theta < 1 \), the associated weight corresponds to the Gevrey class function space, denoted by \( W^G_{s,\theta} \). On the other hand, for \( f(x) = (\ln x)^q \) with \( x > c \), the weight corresponds to the logarithmic ultra-differentiable function space, denoted by \( W^U_{s,q} \).  
     Lemma \ref{function} confirms that both \( x^\theta \) and \( (\ln x)^q \) satisfy the conditions of the function class \( \mathcal{F} \), making them admissible weights in this framework.
     
    We denote by \( B_s(r) \) the ball in \( W_s \) centered at the origin with radius \( r \).  
    For a functional \( H \) defined on the space \( W_s \), it induces the following Hamiltonian system:  
    $$\dot{u}_{(j,+1)} = -\i\frac{\partial H}{\partial u_{(j,-1)}},\quad 
    \dot{u}_{(j,-1)} = \i\frac{\partial H}{\partial u_{(j,+1)}}.$$  
    Here, the Hamiltonian system describes the evolution of the Fourier coefficients \( u_{(j,\pm 1)} \) under the functional \( H \), where \( \i \) represents the imaginary unit.
    
    By setting \( \overline{J} = (j, -\sigma) \) for \( J = (j, \sigma) \), the corresponding vector field is expressed as:  
    $$X_H(u) := (X_J)_{J \in \mathcal{Z}}, \quad (X_H)_{(j,\sigma)} = -\sigma \i\frac{\partial H}{\partial u_{\overline{(j,\sigma)}}}.$$  
    This vector field \( X_H(u) \) encodes the dynamics of the system governed by the Hamiltonian \( H \), where each component corresponds to the contribution of an individual Fourier mode \( u_J \).
    
    We denote by \( B_s(r) \) the ball in \( W_s \) centered at the origin with radius \( r \).  
    For a functional \( H \) defined on the space \( W_s \), it induces the following Hamiltonian system:  
    $$\dot{u}_{(j,+1)} = -\i\frac{\partial H}{\partial u_{(j,-1)}},\quad 
    \dot{u}_{(j,-1)} = \i\frac{\partial H}{\partial u_{(j,+1)}}.$$  
    By setting \( \overline{J} = (j, -\sigma) \) for \( J = (j, \sigma) \), the corresponding vector field is expressed as:  
    $$X_H(u) := (X_J)_{J \in \mathcal{Z}}, \quad (X_H)_{(j,\sigma)} = -\sigma \i\frac{\partial H}{\partial u_{\overline{(j,\sigma)}}}.$$  
    This vector field \( X_H(u) \) encodes the dynamics of the system governed by the Hamiltonian \( H \).
    
    Throughout this work, constants with numerical subscripts refer to fixed pure constants, while constants with variable subscripts depend only on these variables and do not influence the primary conclusions.  
    Note that the norm  
    $$\Vert u\Vert_s' = \sum_{j\in\mathcal{Z}} |u_j|^2 \e^{2sf(|j|)}$$  
    is equivalent to \( \Vert u\Vert_s \). For simplicity, we henceforth use \( \sum_{j\in\mathcal{Z}} |u_j|^2 \e^{2sf(|j|)} \) to denote the norm in \( W_s \).

    \section{Resonant Normal Form}\label{sec:res}
    \subsection{Polynomials Setting}\label{sec:pol}
    A homogeneous polynomial \( P \) of degree \( d \) can be expressed in the following form:  
    \begin{equation}\label{P}  
    	P(u) = \sum_{J_1,\dots,J_d\in\mathcal{Z}}P_{J_1,\dots, J_d}u_{J_1}\dots u_{J_d},  
    \end{equation}  
    where \( P_{J_1,\dots,J_d} \) are the coefficients and \( u_{J_l} \) represent the variables associated with the index \( J_l \).
      
    By introducing the multi-index notation \( \mathcal{J} = (J_1, \dots, J_d) \), this expression can be rewritten as  
    \[  
    P(u) = \sum_{\mathcal{J}\in\mathcal{Z}^d}P_{\mathcal{J}}u^\mathcal{J},  
    \]  
    where \( P_{\mathcal{J}} \) and \( u^\mathcal{J} = u_{J_1} \dots u_{J_d} \) simplify the notation for brevity.  
    Based on this framework, we proceed to define the class of polynomials under consideration.

    \begin{definition}
    	Let \( d \geq 1 \). We define \( \mathcal{P}_d \) as the space of formal homogeneous polynomials \( P(u) \) of degree \( d \), expressed in the form \eqref{P}, and satisfying the following conditions:  
    	\begin{enumerate}  
    		\item \textbf{Momentum Conservation}:  
    		\( P(u) \) is supported on monomials with zero momentum indicator, i.e.,  
    		\[  
    		P(u) = \sum_{\mathcal{J} \in \mathcal{I}_d} P_{\mathcal{J}} u_{J_1} \dots u_{J_d},  
    		\]  
    		where \( \mathcal{I}_d \) denotes the set of multi-indices satisfying \( \mathcal{M}_d(\mathcal{J}) = 0 \);  
    		
    		\item \textbf{Reality}:  
    		For any \( \mathcal{J} \in \mathcal{Z}^d \), we require  
    		\[  
    		\overline{P_{\mathcal{J}}} = P_{\overline{\mathcal{J}}},  
    		\]  
    		ensuring that \( P(u) \) represents a real-valued functional in the context of the system;  
    		
    		\item \textbf{Boundedness}:  
    		The coefficients \( P_{\mathcal{J}} \) are uniformly bounded, i.e.,  
    		\[  
    		C_P := \sup_{\mathcal{J} \in \mathcal{I}_d} |P_{\mathcal{J}}| < \infty.  
    		\]  
    	\end{enumerate}
      \end{definition}
    For given \( r, s > 0 \), the space \( \mathcal{P}_d \) is equipped with the norm:  
    \[  
    |P(u)|_{r,s} := \frac{1}{r} \sup_{u \in B_s(r)} \Vert X_{\underline{P}}(u) \Vert_s,  
    \]  
    where  
    \[  
    \underline{P}(u) = \sum_{\mathcal{J} \in \mathcal{I}_d} |P_{\mathcal{J}}| u_{J_1} \dots u_{J_d}.  
    \]  
    
    For integers \( \infty > d_2 \geq d_1 \geq 1 \), the space \( \mathcal{P}_{d_1,d_2} := \bigcup_{k=d_1}^{d_2} \mathcal{P}_k \) consists of polynomials  
    \[  
    P = \sum_{k=d_1}^{d_2} P_k, \quad P_k \in \mathcal{P}_k,  
    \]  
    and inherits the same norm given by  
    \[  
    |P|_{r,s} := \frac{1}{r} \sup_{u \in B_s(r)} \Vert X_{\underline{P}} \Vert_s.  
    \]   Similarly, we define \( \mathcal{P}_{d,\infty} := \bigcup_{k \geq d} \mathcal{P}_k \), and any \( P \in \mathcal{P}_{d,\infty} \) takes the form  
    \[  
    P = \sum_{k \geq d} P_k, \quad P_k \in \mathcal{P}_k,  
    \]  
    with the norm as above. When \( d_1 > d_2 \), we define \( \mathcal{P}_{d_1,d_2} := \emptyset. \)
    
    For \( P_1, P_2 \in \mathcal{P}_{d_1,d_2} \), the Poisson bracket is defined as:  
    \[  
    \{P_1, P_2\} := -\i \sum_{(j,\sigma) \in \mathcal{Z}} \sigma \frac{\partial P_1}{\partial u_{(j,\sigma)}} \frac{\partial P_2}{\partial u_{(j,-\sigma)}}.  
    \]  
    
    In this section, we are particularly interested in the following quantity associated with the multi-index \( \mathcal{J} = (J_1, \dots, J_d) \):  
    \[
    \mathcal{E}_d(\mathcal{J}) = \sum_{l=1}^{d} \sigma_l j_l^2,  
    \]  
    which characterizes whether the index \( \mathcal{J} \) resonates with the quadratic terms \( H_2 \) in the Hamiltonian. Based on this criterion, the resonant set is defined as:  
    \[
    \mathcal{R}_d = \{\mathcal{J} \colon \mathcal{E}_d(\mathcal{J}) = 0\},  
    \]  
    and its complement in \( \mathcal{Z}^d \) is given by:  
    \[
    \mathcal{N}_d = \mathcal{Z}^d \setminus \mathcal{R}_d.  
    \]  
    We also define the non-resonant set over all degrees as:  
    \[
    \mathcal{N} = \bigcup_d \mathcal{N}_d.  
    \]

    \subsection{Resonant Normal Form Lemma}\label{sec:nfl}
    \begin{proposition}\label{resonantlize}
    	Let \( d > 3 \), \( s > s_0 \), and suppose the radius \( r > 0 \) satisfies the constraint:
    	\[
    	3(C_K + 1) r d^2 < 1.
    	\]  
    	For the Hamiltonian \( H \) in \eqref{Hamiltonian} defined on the ball \( B_s\left(\frac{r}{2}\right) \), there exists a symplectic map \( \Phi: B_s\left(\frac{r}{2}\right) \to B_s(r) \) with the following properties:
    	\begin{enumerate}  
    		\item
    		\[
    		H \circ \Phi^{-1} = H_0 + Z_d + R_d,  
    		\]  
    		where \( H_0 \) is the unperturbed quadratic Hamiltonian, \( Z_d \) represents the resonant terms, and \( R_d \) stands for the non-resonant remainder.  
    		
    		\item  
    		The resonant terms \( Z_d \) belong to \( \mathcal{P}_{4,2d+2} \), satisfy the resonance condition \( \{H_0, Z_d\} = 0 \), and are uniformly bounded as:  
    		\[  
    		|Z_d|_{r,s} \leq 2C_K r^2.  
    		\]  
    		
    		\item
    		The symplectic map \( \Phi \) approximates the identity on \( B_s\left(\frac{r}{2}\right) \):  
    		\[  
    		\sup_{u \in B_s\left(\frac{r}{2}\right)} \|\Phi(u) - u\|_s \leq 2C_K r^3,  
    		\]  
    		which further implies:  
    		\[  
    		\|\Phi(u) - u\|_s \leq 16C_K \|u\|_s^3, \quad \text{for all } u \in B_s\left(\frac{r}{2}\right).  
    		\]
    		
    		\item 
    		The remainder \( R_d \) belongs to \( \mathcal{P}_{2d+2,\infty} \) and satisfies the bound:  
    		\[
    		|R_d|_{r,s} \leq 6C_K^d r^{2d-2} d^{5d+1}.  
    		\]
    	\end{enumerate}  
    \end{proposition}

    \begin{proof}
    	We prove this proposition by induction. Let
    	\[
    	r_k = r - \frac{k-1}{2d-2}r.
    	\]
    	At each step \(1 \leq k \leq d\), we will show that there exists a transformation \(\Phi_k\) satisfying the following properties:
    	\begin{enumerate}
    		\item \(H_k = H \circ \Phi_k^{-1} = H_0 + Z_k + P_k + R_k\);
    		\item \(P_k \in \mathcal{P}_{2k+2,2d}\) and \(|P_k|_{r_k,s} \leq C_K r^{2k} d^{2k-2}\);
    		\item \(\sup_{u \in B_s(r_k)} \|\Phi_k(u) - u\|_s \leq \sum_{l=1}^k C_K r^{2l+1} d^{2l-2}\);
    		\item \(Z_k \in \mathcal{P}_{2k+2,2d}, \, \{Z_k, H_0\} = 0, \, |Z_k|_{r_k,s} \leq 2C_K r^2\);
    		\item \(R_k \in \mathcal{P}_{2k+2,\infty}, \, |R_k|_{r_k,s} \leq (d-2) \left(\left(\frac{d-1}{d-2}\right)^{k-1} - 1\right) 3C_K^d r^{2d-2} d^{5d}\).
    	\end{enumerate}
    	
    	Suppose that the conditions of the proposition are satisfied for some $k \geq 1$.
        We will prove that they also hold for $k+1$. 
        To construct the next symplectic transformation, we solve the following homological equation:
        \begin{equation*}
        	\{H_0, S_k\} + P_k = \Delta Z_k.
        \end{equation*}
        
       Given the expansion 
       $$P_k = \sum_{l=k+1}^d \sum_{\mathcal{J} \in \mathcal{I}_l} P_{2l, \mathcal{J}} u_{\mathcal{J}},$$ 
       we can solve the homological equation by comparing coefficients:
       $$
       S_k = \sum_{l=k+1}^d \sum_{\mathcal{J} \in \mathcal{I}_l \setminus \mathcal{R}_l} 
       \frac{P_{2l,\mathcal{J}}}{\i\mathcal{E}_l(\mathcal{J})} u_{\mathcal{J}}, 
       \quad 
       \Delta Z_k = \sum_{l=k+1}^d \sum_{\mathcal{J} \in \mathcal{R}_l} P_{2l,\mathcal{J}} u_{\mathcal{J}}.
       $$
       Consequently, we have 
       $$|S_k|_{r_k,s} \leq |P_k|_{r_k,s} \leq \frac{1}{2d} \leq \delta_k := \frac{r_{k} - r_{k+1}}{8\e r_k}.$$
       This follows from the fact that $|\mathcal{E}_l(\mathcal{J})| \geq 1$ for $\mathcal{J} \in \mathcal{I}_l \setminus \mathcal{R}_l$ and the assumptions imposed on $r$.

    Let $\phi_k^t$ denote the Hamiltonian flow generated by $S_k$. We define the transformation $\phi_k := \phi_k^1$ with its inverse $\phi_k^{-1}$.

    We now estimate the near-identity property of $\phi_k^{\pm}$:
    $$
    \sup_{u \in B_s(r_{k+1})} \|\phi_k^{\pm}(u) - u\|_s 
    \leq \sup_{u \in B_s(r_{k+1})} \|X_{S_k}(u)\|_s 
    \leq r_{k+1} |S_k|_{r_{k+1},s} 
    \leq C_K r^{2k+1} d^{2k-2} 
    \leq r_k - r_{k+1},
    $$
    which implies that $\phi_k^{\pm}: B_s(r_{k+1}) \to B_s(r_k)$.

    We define the transformation for step $k+1$ by composing $\phi_k$ with $\Phi_k$, i.e.,
    $$
    \Phi_{k+1} = \Phi_k \circ \phi_k.
    $$
    Using the telescope argument, we estimate the near-identity property of $\Phi_k$.
   
    	\begin{align*}
    		\sup_{u\in B_{s}(r_{k+1})}\Vert\Phi_{k+1}(u)-u\Vert_s&\leq  \sup_{u\in B_{s}(r_{k+1})}\Vert\Phi_{k}\circ\phi_k(u)-\phi_k (u)\Vert_s+\Vert\phi_k(u)-u\Vert_s \\
    		&\leq  \sup_{u\in B_{s}(r_{k})}\Vert\Phi_{k}(u)-u\Vert_s+\sup_{u\in B_{s}(r_{k+1})}\Vert\phi_k(u)-u\Vert_s\\
    		&\leq\sum_{l=1}^{k-1}C_Kr^{2l+1}d^{2l-2}+C_Kr^{2k+1}d^{2k-2}\\
    		&\leq\sum_{l=1}^{k}C_Kr^{2l+1}d^{2l-2}.
    	\end{align*}
    	
    	Due to the group property of the generating flow, $\phi^t$ is locally invertible. Consequently, the new Hamiltonian, defined on $B_s(r_{k+1})$, can be expressed as:
    	\[
    	H_{k+1} = H_k \circ \phi_k^{-1}.
    	\]

    	Using Taylor's formula with integral remainders, the transformed Hamiltonian $H_k \circ \phi_k^{-1}$ can be expressed in the following form:

    	 \begin{align*}
    	 	H_k\circ\phi_k^{-1}&=H_0+\{H_0,S_k\}+\sum_{l=2}^{n_1}\frac{ad_{S_k}^l}{l!}H_0+R_{H_0,k}\\
    	 	&+Z_k+\sum_{l=1}^{n_2}\frac{ad_{S_k}^l}{l!}Z_k+R_{Z_k,k}\\
    	 	&+P_k+\sum_{l=1}^{n_3}\frac{ad_{S_k}^l}{l!}P_k+R_{P_k,k}+R_k\circ\phi_k^{-1},
    	 \end{align*}
    	 where
    	 	\begin{align*}
    	 	R_{H_0,k}&=\int_{0}^{1}\frac{(1-\tau)^{n_1}}{n_1!}ad_{S_k}^{n_1+1}(H_0)\circ\phi^{-\tau}(u)\d \tau,\\
    	 	R_{Z_k,k}&=\int_{0}^{1}\frac{(1-\tau)^{n_2}}{n_2!}ad_{S_k}^{n_2+1}(Z_k)\circ\phi^{-\tau}(u)\d \tau,\\
    	 	R_{P_k,k}&=\int_{0}^{1}\frac{(1-\tau)^{n_3}}{n_3!}ad_{S_k}^{n_3+1}(P_k)\circ\phi^{-\tau}(u)\d \tau.
    	 \end{align*}
        The integers \(n_1, n_2, n_3\) are selected such that the degree of the terms in the integral remainders exceeds \(2d\). The specific values of \(n_1, n_2, n_3\) will be determined in subsequent remainder estimates.
        
    	 We define:
    	 $$
    	 P_{k+1} := \sum_{l=2}^{n_1} \frac{\mathrm{ad}_{S_k}^l}{l!} H_0 + \sum_{l=1}^{n_2} \frac{\mathrm{ad}_{S_k}^l}{l!} Z_k + \sum_{l=1}^{n_3} \frac{\mathrm{ad}_{S_k}^l}{l!} P_k,
    	 \quad
    	 Z_{k+1} := Z_k + \Delta Z_k,
    	 $$
    	 $$
    	 R_{k+1} := R_{H_0, k} + R_{Z_k, k} + R_{P_k, k} + R_k \circ \phi_k^{-1}.
    	 $$
    	 
    	Using the homological equation, we conclude that the map $\phi_k$ transforms $H_k$ into the desired form of $H_{k+1}$. We now proceed with the necessary remainder estimates.
    	
    	Setting \(\delta_k = \frac{r_{k+1}}{r_k}\), we verify that 
    	\[
    	|S_k|_{r_k, s} \leq C_K r^{2k} d^{2k-2} \leq \frac{1}{2} \leq \delta_k \leq 1.
    	\]
    	We now estimate the components of \(P_{k+1}\) using Lemma \ref{Lie} and the assumptions:
    	\begin{align*}
    		\left| \sum_{l=2}^{n_1} \frac{\mathrm{ad}_{S_k}^l}{l!} H_0 \right|_{r_{k+1}, s}
    		&= \left| \sum_{l=1}^{n_1-1} \frac{\mathrm{ad}_{S_k}^l}{(l+1)!} \{S_k, H_0\} \right|_{r_{k+1}, s} \\
    		&\leq \sum_{l=1}^{n_1-1} \frac{1}{(l+1)!} \left(\frac{|S_k|_{r_k, s}}{2\delta_k} \right)^l |P_k|_{r_k, s} \\
    		&\leq \left(\exp\left(\frac{|S_k|_{r_k, s}}{2\delta_k}\right) - 1 \right)\frac{|S_k|_{r_k, s}}{2\delta_k} |P_k|_{r_k, s} \\
    		&\leq C_K^2 r^{4k} d^{4k-3} \\
    		&\leq \frac{1}{3} C_K r^{2k+2} d^{2k},
    	\end{align*}
    	
    	\begin{align*}
    		\left| \sum_{l=1}^{n_2} \frac{\mathrm{ad}_{S_k}^l}{l!} Z_k \right|_{r_{k+1}, s}
    		&\leq \sum_{l=1}^{n_2} \frac{1}{l!} \left(\frac{|S_k|_{r_k, s}}{2 \delta_k} \right)^l |Z_k|_{r_k, s} \\
    		&\leq \left(\exp\left(\frac{|S_k|_{r_k, s}}{2\delta_k}\right) - 1 \right) \frac{|S_k|_{r_k, s}}{2\delta_k} |Z_k|_{r_k, s} \\
    		&\leq 2C_K^2 r^{2k+2} d^{2k-1} \\
    		&\leq \frac{1}{3} C_K r^{2k+2} d^{2k},
    	\end{align*}
    	
    	\begin{align*}
    		\left| \sum_{l=1}^{n_3} \frac{\mathrm{ad}_{S_k}^l}{l!} P_k \right|_{r_{k+1}, s}
    		&\leq \sum_{l=1}^{n_3} \frac{1}{l!} \left(\frac{|S_k|_{r_k, s}}{2 \delta_k} \right)^l |P_k|_{r_k, s} \\
    		&\leq \left(\exp\left(\frac{|S_k|_{r_k, s}}{2 \delta_k}\right) - 1 \right) \frac{|S_k|_{r_k, s}}{2\delta_k} |P_k|_{r_k, s} \\
    		&\leq C_K^2 r^{4k} d^{2k-3} \\
    		&\leq \frac{1}{3} C_K r^{2k+2} d^{2k}.
    	\end{align*}
    	
    	Combining the three estimates above, we establish the bound for \(P_{k+1}\).
    	Next, we estimate \(Z_{k+1}\):
    	\begin{align*}
    		|Z_{k+1}|_{r_{k+1}, s}    
    		&\leq \left|\sum_{l=1}^{k} Z_l\right|_{r_{k+1}, s} \\
    		&\leq \sum_{l=1}^{k} |\Delta Z_l|_{r_l, s} \\
    		&\leq \sum_{l=1}^{k} |P_l|_{r_l, s} \\
    		&\leq \frac{C_K r^2}{1 - r^2 d^3} \\
    		&\leq 2C_K r^2.
    	\end{align*}

      To estimate \(R_{H_0, k}\), we take \(\frac{d}{k} - 1 \leq n_1 = \left\lfloor \frac{d}{k} \right\rfloor \leq \frac{d}{k}\), then
      \begin{align*}
      	|R_{H_0, k}|_{r_{k+1}, s} 
      	&\leq \left(\frac{|S_{k}|_{r_k, s}}{2\delta_k}\right)^{n_1} |\{S_k, H_0\}| \\
      	&\leq d^{n_1} \left(C_K r^{2k} d^{2k-2}\right)^{n_1+1} \\
      	&\leq C_K^d r^{2d} d^{4d}.
      \end{align*}
      
       For \(R_{Z_k, k}\), we take \(n_2 = n_1 = \left\lfloor \frac{d}{k} \right\rfloor\), then
       \begin{align*}
       	|R_{Z_k, k}|_{r_{k+1}, s} 
       	&\leq \left(\frac{|S_{k}|_{r_k, s}}{2\delta_k}\right)^{n_2 + 1} |Z_k| \\
       	&\leq d^{n_2+1} \left(C_K r^{2k} d^{2k-2}\right)^{n_2+1} 2C_K r^2 \\
       	&\leq C_K^d r^{2d} d^{4d}.
       \end{align*}
       
       For \(R_{P_k, k}\), we take \(n_3 = \left\lfloor \frac{d-2}{k} \right\rfloor - 1\), then
       \begin{align*}
       	|R_{P_k, k}|_{r_{k+1}, s} 
       	&\leq \left(\frac{|S_{k}|_{r_k, s}}{2 \delta_k}\right)^{n_3 + 1} |P_k| \\
       	&\leq d^{n_3+1} \left(C_K r^{2k} d^{2k-2}\right)^{n_3 + 2} \\
       	&\leq C_K^d r^{2d-2} d^{5d}.
       \end{align*}
       
       Thus, we can estimate the remainder \(R_{k+1}\) as follows:
       \begin{align*}
       	|R_{k+1}|_{r_{k+1}, s} 
       	&\leq |R_k \circ \phi_k^{-1}|_{r_{k+1}, s} 
       	+ |R_{H_0, k}|_{r_{k+1}, s} 
       	+ |R_{Z_k, k}|_{r_{k+1}, s} 
       	+ |R_{P_k, k}|_{r_{k+1}, s} \\
       	&\leq \frac{r_k}{r_{k+1}} |R_k|_{r_k, s} 
       	+ 2 C_K^d r^{2d} d^{4d} 
       	+ C_K^d C^{2d} r^{2d-2} d^{5d} \\
       	&\leq \frac{d-1}{d-2}|R_k|_{r_k, s} + 3 C_K^d r^{2d-2} d^{5d} \\
       	&\leq (d-2) \left(\left(\frac{d-1}{d-2}\right)^k - 1\right) 3 C_K^d r^{2d-2} d^{5d}.
       \end{align*}
    
    \end{proof}
    
    Furthermore, we examine the indices \(\mathcal{J} = \{(k_1, +), (k_2, +), (k_3, -), (k_4, -)\}\) corresponding to the term \(\overline{K}_2 := \Delta Z_1\). These indices must satisfy
    \[
    k_1 + k_2 = k_3 + k_4, \quad k_1^2 + k_2^2 = k_3^2 + k_4^2,
    \]
    which admits only trivial solutions of the form \(\{k_1, k_2\} = \{k_3, k_4\}\). This shows that
    \[
    \overline{K}_2 = \sum_{k_1, k_2 \in \mathbb{Z}} K_{|k_1 - k_2|} |u_1|^2 |u_2|^2
    \]
    is also an integrable term. This property will be crucial in the subsequent construction of the integrable normal form.
    
    \section{Truncation Estimate}\label{sec:tru}
    
    For an integer \(M\), we partition the index set \(\mathcal{Z}\) into two disjoint subsets: the low-mode indices \(\{|J| \leq M\}\) and the high-mode indices \(\{|J| > M\}\). Accordingly, \(u \in W_s\) can be decomposed into its low- and high-mode components:
    \[
    u = u^{>M} + u^{<M} := \sum_{|J| > M} u_J + \sum_{|J| \leq M} u_J.
    \]
    We classify polynomials based on the order of vanishing in the high-mode variables \(u^{>M}\) in this section. 
    
    \begin{lemma}\label{13}
    	Let \(d \geq 4\), and let \(\mathcal{J} = (J_1, J_2, \dots, J_d) \in \mathcal{R}_d\) be an ordered set of indices such that \(|J_1| \geq |J_2| \geq \dots \geq |J_d|\). If \(|J_1| \geq M\) and \(J_1 \neq \overline{J_2}\), then \(|J_3| \geq \sqrt{\frac{M}{d-2}}\). 
    \end{lemma}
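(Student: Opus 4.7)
The hypothesis $\mathcal{J}\in\mathcal{R}_d\cap\mathcal{I}_d$ provides two scalar relations: momentum conservation $\sum_{l=1}^d\sigma_l j_l=0$ and the resonance equation $\sum_{l=1}^d\sigma_l j_l^2=0$. I would move the $l=1,2$ terms to one side and the tail $l\geq 3$ to the other; the ordering $|j_l|\leq|j_3|$ for $l\geq 3$ then produces
\[
|\sigma_1 j_1+\sigma_2 j_2|\leq(d-2)|j_3|,\qquad |\sigma_1 j_1^2+\sigma_2 j_2^2|\leq(d-2)|j_3|^2.
\]
I would then split according to whether $\sigma_1=\sigma_2$ or $\sigma_1=-\sigma_2$.

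If $\sigma_1=\sigma_2$, the second inequality becomes $j_1^2+j_2^2\leq(d-2)|j_3|^2$, and since $j_1^2\geq M^2$ this immediately yields $|j_3|\geq M/\sqrt{d-2}\geq\sqrt{M/(d-2)}$ (using $M\geq 1$). If $\sigma_1=-\sigma_2$, the hypothesis $J_1\neq\overline{J_2}$ is exactly what rules out $j_1=j_2$. After factoring $j_1^2-j_2^2=(j_1-j_2)(j_1+j_2)$, the two inequalities become
\[
|j_1-j_2|\leq(d-2)|j_3|,\qquad |j_1-j_2|\,|j_1+j_2|\leq(d-2)|j_3|^2.
\]
I would then sub-split according to the signs of $j_1,j_2$. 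When $j_1 j_2\geq 0$, I would use $|j_1+j_2|\geq|j_1|\geq M$ together with $|j_1-j_2|\geq 1$ (distinct integers) in the factored resonance bound to obtain $|j_3|^2\geq M/(d-2)$. When $j_1 j_2<0$, I would instead use $|j_1-j_2|=|j_1|+|j_2|\geq M$ in the momentum bound to obtain $|j_3|\geq M/(d-2)$.

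The one delicate point is that this last sub-case yields only $|j_3|\geq M/(d-2)$, which looks weaker than the target $\sqrt{M/(d-2)}$ precisely when $M<d-2$. I would close this gap by integrality of $j_3$: the bound $|j_3|\geq M/(d-2)>0$ forces $|j_3|\geq 1$, while $\sqrt{M/(d-2)}\leq 1$ whenever $M\leq d-2$; in the complementary regime $M\geq d-2$ the inequality $M/(d-2)\geq\sqrt{M/(d-2)}$ is automatic. Assembling the three branches then gives $|j_3|\geq\sqrt{M/(d-2)}$ in all cases.
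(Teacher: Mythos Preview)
Your argument is correct. The underlying idea is the same as the paper's --- isolate the two dominant indices on one side of the resonance relation and bound the tail by $(d-2)|J_3|^2$ --- but the case decomposition differs. The paper splits on the \emph{magnitudes} $|J_1|=|J_2|$ versus $|J_1|>|J_2|$; in the second (generic) case it observes directly that
\[
|J_1|^2-|J_2|^2=(|J_1|+|J_2|)(|J_1|-|J_2|)\geq (|J_1|+|J_2|)\cdot 1\geq M,
\]
so resonance alone gives $(d-2)|J_3|^2\geq M$ without invoking momentum conservation or integrality of $j_3$. You instead split on the \emph{signs} $\sigma_1=\pm\sigma_2$ and then on $\mathrm{sgn}(j_1j_2)$, which produces one extra sub-case ($\sigma_1=-\sigma_2$, $j_1j_2<0$) where you need the momentum relation and the integrality fix-up for $|j_3|$. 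The payoff of your finer split is that it cleanly covers the borderline configuration $j_1=-j_2$, $\sigma_1=-\sigma_2$ (where $\sigma_1 j_1^2+\sigma_2 j_2^2=0$ and resonance says nothing); the paper's first case glosses over this by asserting $\sigma_1=\sigma_2$ whenever $|J_1|=|J_2|$. So your version is slightly longer but more self-contained on that edge case.
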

    \begin{proof}
    	If \(|J_1| = |J_2|\), the resonance condition for \(\mathcal{J}\) implies that \(\sigma_1 = \sigma_2\). It follows that:
    	\[
    	M^2 \leq |J_1|^2 + |J_2|^2 = -\sum_{l=3}^{d} \sigma_l |J_l|^2 \leq (d-2) |J_3|^2.
    	\]
    	
    	If \(|J_1| > |J_2|\), the resonance condition implies that
    	\[
    	M + 1 \leq |J_1| + |J_2| \leq |J_1|^2 - |J_2|^2 = -\sum_{l=3}^{d} \sigma_l |J_l|^2 \leq (d-2) |J_3|^2.
    	\]
    	
    	In either case, the conclusion holds.
    \end{proof}
     
     \begin{lemma}
     	Let \(J^* \in \mathcal{Z}\) be a fixed index with \(|J^*| > M\), and let \(P = P_{\mathcal{J}} u_{\mathcal{J}}\) be a monomial, where \(\mathcal{J} = \{J_1, \dots, J_d\} \in \mathcal{R}\). Then the Poisson bracket 
     	\(\{|u_{J^*}|^2, P\}\) vanishes to at least order 3 in the high-mode variable \(u^{>N}\), where \(N \geq \sqrt{\frac{M}{d-2}}\).
     \end{lemma}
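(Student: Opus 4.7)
The plan is to compute the Poisson bracket explicitly and then count the high-mode factors appearing in the resulting monomial. Since $|u_{J^*}|^2 = u_{(j^*,+)}u_{(j^*,-)}$ involves only the two variables indexed by $(j^*,\pm)$, the definition of $\{\cdot,\cdot\}$ collapses to
\[
\{|u_{J^*}|^2,P\} = -\i\bigl(u_{(j^*,-)}\partial_{u_{(j^*,-)}}P \;-\; u_{(j^*,+)}\partial_{u_{(j^*,+)}}P\bigr) = -\i(b-a)\,P,
\]
where $a$ and $b$ denote the multiplicities of $(j^*,+)$ and $(j^*,-)$ in the multi-index $\mathcal{J}$. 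If $a=b$ the bracket vanishes identically and the claim is trivial; otherwise $a\neq b$ forces $a+b\geq 1$, and the result is reduced to showing that $P$ carries at least three factors $u_{J_l}$ with $|J_l|\geq N$.

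I would then reorder $\mathcal{J}=(J_1,\ldots,J_d)$ by descending magnitude. Since $J^*$ or $\bar{J^*}$ appears in $\mathcal{J}$, one has $|J_1|\geq|j^*|>M\geq N$ automatically. The main split is on whether the top pair is conjugate. If $J_1\neq\bar{J_2}$, the hypotheses of Lemma \ref{13} are met and yield $|J_3|\geq\sqrt{M/(d-2)}=N$; thus $|J_1|,|J_2|,|J_3|\geq N$ provide the three required high-mode factors.

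In the degenerate case $J_1=\bar{J_2}$, the equality $|J_1|=|J_2|>M$ already supplies two high-mode factors, and the third is located by a short dichotomy. If $\{J_1,J_2\}=\{J^*,\bar{J^*}\}$, then $a\geq 1$ and $b\geq 1$, and the standing assumption $a\neq b$ forces $a+b\geq 3$; consequently at least one further copy of $(j^*,\pm)$ sits among $\{J_3,\ldots,J_d\}$, giving $|J_3|\geq|j^*|>M\geq N$. Otherwise $\{J_1,J_2\}\cap\{J^*,\bar{J^*}\}=\emptyset$, so some instance of $J^*$ or $\bar{J^*}$ lies in $\{J_3,\ldots,J_d\}$, again yielding $|J_3|\geq|j^*|>M\geq N$.

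The main subtlety is the degenerate pairing $J_1=\bar{J_2}$, where Lemma \ref{13} no longer applies directly and the third high-mode factor must instead be recovered by tracking the location of $J^*$ inside $\mathcal{J}$; the parity constraint $a\neq b$ is what ultimately prevents $J^*$ from hiding among the low modes. Once this case analysis is carried out, three high-mode factors are secured in every scenario and the claimed vanishing to order three in $u^{>N}$ follows.
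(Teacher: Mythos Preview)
Your proof is correct and follows the same overall strategy as the paper: reduce to Lemma \ref{13} in the non-degenerate case $J_1\neq\bar{J_2}$, and treat the conjugate-pair case $J_1=\bar{J_2}$ separately. Your explicit computation $\{|u_{J^*}|^2,P\}=-\i(b-a)P$ is a clean way to isolate exactly when the bracket vanishes, and your dichotomy on whether $\{J_1,J_2\}=\{J^*,\bar{J^*}\}$ is exhaustive (since $J_2=\bar{J_1}$ forces $\{J_1,J_2\}$ either to coincide with $\{J^*,\bar{J^*}\}$ or to be disjoint from it).

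In fact your handling of the case $J_1=\bar{J_2}$ is more complete than the paper's. The paper's Case 2 asserts that the bracket vanishes whenever the top pair is conjugate, but this is only true when $a=b$; if, say, $a=2$ and $b=1$ with all three copies of $(j^*,\pm)$ sitting at the top, the bracket is nonzero. Your argument covers this correctly by using $a\neq b\Rightarrow a+b\geq 3$ to locate a third high-mode factor among $J_3,\ldots,J_d$. One cosmetic point: you write $\sqrt{M/(d-2)}=N$ and $M\geq N$, whereas the lemma as stated has $N\geq\sqrt{M/(d-2)}$; this inequality is evidently reversed in the paper (the proof there also needs $N\leq\sqrt{M/(d-2)}$ for $|J_3|\geq\sqrt{M/(d-2)}$ to land in the high modes), so the discrepancy is inherited rather than introduced by you.
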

     
     \begin{proof}
     	We consider three cases:
     	
     	1. If \(|J^*| > |J_1|\), then the indices of \(P\) are disjoint from \(\{J^*, \overline{J^*}\}\), which means \(\{|u_{J^*}|^2, P\} = 0\). The conclusion holds trivially.
     	
     	2. If \(|J^*| = |J_1| = |\overline{J_2}|\), then \(\{|u_{J^*}|^2, P\} = 0\), and the conclusion again holds.
     	
     	3. If \(N < |J^*| \leq |J_1|\) and \(J_1 \neq \overline{J_2}\), then the Poisson bracket is non-zero. Any resulting monomial contains indices that must satisfy the conditions of Lemma \ref{13}. Therefore, the third largest index, \(J_3'\), in any monomial of \(\{|u_{J^*}|^2, P\}\) must satisfy \(|J_3'| \geq \sqrt{\frac{M}{d-2}}\). This ensures that at least three indices in the monomial correspond to high modes (modes \(>N\)), which implies that the polynomial vanishes to order 3 in \(u^{>N}\).
     \end{proof}

     \begin{lemma}[Truncation Estimate]\label{cut}
     	Let \(s > s_0\), and let 
     	\(P \in \mathcal{P}_d\) be a polynomial of degree \(d \geq 3\) that vanishes to at least order 3 in the high-mode variables \(u^{>} = (u_{J})_{|J| > N}\). Then its norm is bounded as follows:
     	\[
     	|P|_{r,s} \leq C_{P} \frac{(2r)^{d-2}}{\mathrm{e}^{(s-s_0)f(N)}}.
     	\] 
     \end{lemma}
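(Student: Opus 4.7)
The strategy is to bound $\Vert X_{\underline{P}}(u)\Vert_s$ directly for $u\in B_s(r)$, using the vanishing of $P$ to order $3$ in the high-mode variables $u^{>N}$ as the sole source of the exponential decay $\e^{-(s-s_0)f(N)}$.

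Because every monomial of $P$ (and hence of $\underline{P}$) contains at least three factors $u_{J_l}$ with $|J_l|>N$, and the differentiation forming $(X_{\underline{P}})_{(j,\sigma)}=-\sigma\i\,\partial\underline{P}/\partial u_{(j,-\sigma)}$ removes at most one such factor, every monomial $u^{\mathcal{J}'}$ appearing in a component of the vector field still has at least two indices $J_i'$ with $|J_i'|>N$. I bound each factor by $|u_J|\leq\Vert u\Vert_s\,\e^{-sf(\langle j\rangle)}$ (treating $\Vert\cdot\Vert_s$ as a Hilbert norm), so that
\[
|u^{\mathcal{J}'}|\leq\Vert u\Vert_s^{d-1}\prod_{i=1}^{d-1}\e^{-sf(\langle J_i'\rangle)}.
\]

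The weight $\e^{sf(\langle j\rangle)}$ inside $\Vert X_{\underline{P}}\Vert_s$ is absorbed using the momentum constraint $|j|\leq\sum_i|j_i'|$ together with condition~3 on $f$:
\[
f(\langle j\rangle)\leq f(\langle J'_{\max}\rangle)+C_f\sum_{i\neq\max}f(\langle J_i'\rangle),
\]
which leaves the net exponential factor $\e^{-s(1-C_f)\sum_{i\neq\max}f(\langle J_i'\rangle)}$. Since $J'_{\max}$ must itself be a high mode (it carries the largest absolute index among $\geq 2$ high-mode indices), at least one non-maximal index $J_{i_0}'$ is also a high mode, whence $f(\langle J_{i_0}'\rangle)\geq f(N)$. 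Choosing $s_0$ so that $s-s_0\leq s(1-C_f)$ then extracts the desired factor $\e^{-(s-s_0)f(N)}$.

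The residual decay $\e^{-s(1-C_f)f(\langle J_i'\rangle)}$ on the other non-maximal indices guarantees that the sum over the remaining $d-2$ free components converges to a constant; combined with the combinatorial count of admissible tuples, this is absorbed into the prefactor $2^{d-2}$. Assembling the bounds yields $\Vert X_{\underline{P}}(u)\Vert_s\leq C_P(2r)^{d-1}\,\e^{-(s-s_0)f(N)}$, and division by $r$ produces the stated estimate on $|P|_{r,s}$. The main technical obstacle is the convergent weighted summation once the decay has been extracted: one must verify that the strict inequality $C_f<1$ in condition~3 leaves positive exponential decay on \emph{every} non-maximal index, and that momentum conservation ties the maximal index back to the free index $j$, so that no index is ever summed without either a decay factor or a constraint.
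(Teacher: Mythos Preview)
Your route differs from the paper's and contains a genuine gap in the exponent. The decay you extract via condition~3 on $f$ has weight $s(1-C_f)$, not $s-s_0$: after cancelling $\e^{sf(\langle J'_{\max}\rangle)}$ against the $\Vert\cdot\Vert_s$-weight, each non-maximal factor carries only $\e^{-s(1-C_f)f(\langle J_i'\rangle)}$. Your line ``choosing $s_0$ so that $s-s_0\leq s(1-C_f)$'' does not rescue this, because $s_0$ is \emph{fixed} in the paper (it is the summability threshold of Lemma~\ref{normP}), and the inequality $s_0\geq sC_f$ fails once $s>s_0/C_f$. Worse, the single non-maximal high-mode factor must serve double duty: you need it both for the $f(N)$ decay and for summability over $J'_{i_0}$. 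Splitting $\e^{-s(1-C_f)f}=\e^{-(s-s_0)(1-C_f)f}\cdot\e^{-s_0(1-C_f)f}$ so that the second piece handles the sum leaves only $(s-s_0)(1-C_f)f(N)$ in the exponent, strictly weaker than the stated $(s-s_0)f(N)$.

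The paper avoids this by decoupling the two mechanisms. It writes $P=\sum_{l\geq 3}P_l(u^{>N},u^{<N})$ and invokes a mixed-norm vector field bound (analogous to Lemma~3.8 of \cite{BFM24}) of the shape
\[
\Vert X_P(u)\Vert_s\;\lesssim\; C_P\,2^d\bigl(\Vert u^{>N}\Vert_{s_0}\Vert u^{<N}\Vert_s^{d-2}+\Vert u^{>N}\Vert_{s_0}^2\Vert u^{<N}\Vert_s^{d-3}\bigr),
\]
where one high-mode factor is measured in the \emph{weaker} norm $\Vert\cdot\Vert_{s_0}$ while all the remaining factors and the weight-transfer argument stay at level $s$. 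The decay then comes entirely from the elementary inequality $\Vert u^{>N}\Vert_{s_0}\leq \e^{-(s-s_0)f(N)}\Vert u\Vert_s$, which delivers exactly the exponent $s-s_0$ with no competition against summability and no constraint linking $s_0$ to $s$. If you want to salvage your direct monomial argument, the fix is to bound one surviving high-mode factor by $|u_{J'_{i_0}}|\leq \e^{-s_0 f(\langle J'_{i_0}\rangle)}\Vert u^{>N}\Vert_{s_0}$ rather than by $\e^{-sf}\Vert u\Vert_s$, and then run the weight transfer with the full $s$-decay on the other $d-2$ factors.
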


    \begin{proof}
    	The condition that \(P\) vanishes to at least order 3 in \(u^{>N}\) allows us to write it in the form 
    	\[
    	P(u) = \sum_{l=3}^{d} P_l(u^{>N}, u^{<N}),
    	\] 
    	where each \(P_l\) is a polynomial that is homogeneous of degree \(l\) in \(u^{>N}\) and degree \(d-l\) in \(u^{<N}\).
    	
    	For \(|j| \leq N\), the associated component of the vector field is given by
    	\[
    	(X_P(u^{>}, u^{<}))_j = \sum_{l=3}^{d} \partial_{u_{\bar{j}}} P_l(u^{>N}, u^{<N}),
    	\]
    	which is a polynomial that is homogeneous of degree at least 3 in \(u^{>N}\). For \(|j| > N\), similarly, we have:
    	\[
    	(X_P(u^{>}, u^{<}))_j = \sum_{l=3}^{d} \partial_{u_{\bar{j}}} P_l(u^{>N}, u^{<N}),
    	\]
    	which is homogeneous of degree at least 2 in \(u^{>N}\).
    	
    	Using a technical result (Lemma A.1 in the Appendix of \cite{BFM24}), we can bound the norm of the vector field as follows:
    	\[
    	\Vert (X_P)(u^{>}, u^{<}) \Vert_s 
    	\leq C_{P} 2^d \big(\Vert u^{>} \Vert_{s_0} \Vert u^{<} \Vert_{s}^{d-2} + \Vert u^{>} \Vert_{s_0}^2 \Vert u^{<} \Vert_s^{d-3}\big).
    	\]
    	
    	Furthermore, the norm of the high-mode component \(u^{>}\) can be estimated as:
    	\[
    	\Vert u^{>} \Vert_{s_0}^2 
    	= \sum_{|J| > N} \mathrm{e}^{2s_0 f(|J|)} |u_J|^2 
    	= \sum_{|J| > N} \frac{\mathrm{e}^{2sf(|J|)} |u_J|^2}{\mathrm{e}^{(2s - 2s_0) f(|J|)}} 
    	\leq \frac{\Vert u \Vert_s^2}{\mathrm{e}^{(2s - 2s_0)f(N)}}.
    	\]
    	
    	Combining these two inequalities, we arrive at the bound:
    	\[
    	\sup_{u \in B^s(r)} \Vert (X_P)(u^{>}, u^{<}) \Vert_s 
    	\leq C_{P} \frac{2^d r^{d-1}}{\mathrm{e}^{(s-s_0)f(N)}}.
    	\]
    	
    	By the definition of the norm \( |P|_{r,s} \), this implies the desired result:
    	\[
    	|P|_{r,s} \leq C_{P} \frac{(2r)^{d-2}}{\mathrm{e}^{(s-s_0)f(N)}}.
    	\]
    \end{proof}

    \section{Integrable Normal Form}\label{sec:int}
    
    \subsection{Frequencies and the Rational Normal Form}\label{sec:fre}
    
    Based on the low and high mode decomposition, we define the following projection operators:
    \[
    \Pi^{>M}(u) := u^{>M}, \quad \Pi^{<M}(u) := u^{<M}, \quad \text{for any } u \in W_s.
    \]
    In this section, we construct a rational normal form on the finite-dimensional space 
    \[
    W_s^M := \{u \mid \Pi^{>M}(u) = 0\}, 
    \]
    restricted to the ball \(B_s^M(r) := B_s(r) \cap W_s^M\).
    
    We denote the set of low-mode truncated indices as:
    \[
    \mathfrak{J}^{d,M} := \Big\{\mathcal{J} = \{J_1, \dots, J_d\} \mid |J_i| \leq M, \, 1 \leq i \leq d \Big\}, 
    \quad \mathfrak{J}^M := \bigcup_{d \geq 2} \mathfrak{J}^{d,M}.
    \]   For any \(\mathcal{J} \in \mathfrak{J}^{d,M}\), we define the corresponding frequency as:
    \[
    \omega^M_{\mathcal{J}}(u) := \mathrm{i} \sum_{\alpha=1}^d \frac{\partial K_2(u)}{\partial |u_{J_\alpha}|^2} 
    = 2 \mathrm{i} \sum_{\alpha=1}^d \sum_{k \in \mathbb{Z}} K_{|k-\alpha|} |u_k|^2.
    \]

    We now demonstrate that the frequency \(\omega^M_{\mathcal{J}}\) is Lipschitz continuous with respect to \(u\).
    
    \begin{lemma}[Lipschitz Property of the Frequencies]\label{lip}
    	For \(u, u' \in W_s\) and \(\mathcal{J} \in \mathfrak{J}^{d,M}\), the following estimate holds:
    	\[
    	|\omega_{\mathcal{J}}^M(u) - \omega_{\mathcal{J}}^M(u')| 
    	\leq 2d \Vert u - u' \Vert_s (\Vert u \Vert_s + \Vert u' \Vert_s).
    	\]
    \end{lemma}
    \begin{proof}
    	Note that:
    	\begin{align*}   
    		|\omega_{\mathcal{J}}^M(u) - \omega_{\mathcal{J}}^M(u')| 
    		&\leq 2 \sum_{\alpha=1}^{d} \sum_{k \in \mathbb{Z}} K_{|k - J_\alpha|} \big| |u_k|^2 - |u'_k|^2 \big| \\
    		&\leq 2 \sum_{\alpha=1}^{d} \sum_{k \in \mathbb{Z}} K_{|k - J_\alpha|} \mathrm{e}^{-2s f(|k|)} \mathrm{e}^{2s f(|k|)} \big| |u_k|^2 - |u'_k|^2 \big| \\
    		&\leq 2 \sum_{\alpha=1}^{d} \sum_{k \in \mathbb{Z}} K_{|k - J_\alpha|} \mathrm{e}^{-2s f(|k|)} \mathrm{e}^{2s f(|k|)} \big| |u_k - u'_k| \cdot (|u_k| + |u'_k|) \big| \\
    		&\leq 2d \Vert u - u' \Vert_s (\Vert u \Vert_s + \Vert u' \Vert_s).
    	\end{align*}
    \end{proof}
    
    We now define several non-resonant domains, where the rational normal form will be constructed:
    \[
    \mathfrak{D}_{\gamma}^{d,M} := \Big\{ u \in W_s^M \,\Big|\, \min_{\mathcal{J} \in \mathfrak{J}^{d,M}} \big| \omega^M_{\mathcal{J}}(u) \big| > \gamma \Vert u \Vert_s^2 \Big\},
    \]
    \[
    \mathfrak{D}_{0^+}^{d,M} := \Big\{ u \in W_s^M \,\Big|\, \min_{\mathcal{J} \in \mathfrak{J}^{d,M}} \big| \omega^M_{\mathcal{J}}(u) \big| > 0 \Big\},
    \]
    \[
    \mathfrak{B}_{\gamma, s}^{d,M}(r) := B_s^M(r) \cap \mathfrak{D}_{\gamma}^{d,M}.
    \]
    
   \begin{definition}  
   	We define the rational normal form \(Q(u)\) as a formal series:  
   	\begin{equation}\label{fraction}  
   		Q(u) = \sum_{\mathcal{J} \in \mathcal{R} \cap \mathfrak{J}^M} f_{\mathcal{J},h}(u) u_{\mathcal{J}}  
   		= \sum_{\mathcal{J} \in \mathcal{R} \cap \mathfrak{J}^M} \sum_{h \in \overline{\mathcal{N}}} Q_{\mathcal{J},h}  
   		\prod_{\alpha=1}^{\#h} \frac{\mathrm{i}}{\omega_{h_\alpha}^M(u)},  
   	\end{equation}  
   	where \(\overline{\mathcal{N}} = \bigcup_{n \geq 1} \big(\mathcal{N} \cap \mathfrak{J}^M\big)^n\).  
   	
   	We use the following definitions to characterize the structure of $Q(u)$:
    	
    \begin{enumerate}  
    	\item Order \(q\):
    	For all \(\mathcal{J}, h\) satisfying \(Q_{\mathcal{J},h} \neq 0\), the order \(q\) is defined as:  
    	\[
    	q = \#\mathcal{J} - 2\#h \geq 4.
    	\]
    	\item The maximum number of terms in a simple multiplier of the denominator, denoted by \(\mathfrak{h}_Q\): 
    	\[
    	\mathfrak{h}_Q := \sup_{Q_{\mathcal{J},h} \neq 0} \sup_{1 \leq \alpha \leq \#h} \#h_\alpha.
    	\]
    	\end{enumerate}
     We denote \(Q \in \mathcal{H}^M_q\) if \(Q\) admits the expansion given in \eqref{fraction},  
      and satisfies the index conditions defined above.  
 	
    \end{definition}

  \begin{remark}
  	A monomial \(u_{\mathcal{J}}\) with \(\mathcal{J} \in \mathcal{R}\) can be regarded as a trivial rational normal form, where \(\mathfrak{h}_Q = 0\) and \(\mathfrak{n}_Q = 0\).
  \end{remark}
  
  For a rational normal form \(Q(u) \in \mathcal{H}^M_q\) with generic degree \(q\), defined on \(\mathfrak{B}_{\gamma,s}^{\mathfrak{h}_Q,M}(r)\), we define its norm as:  
  \[
  |Q|_{[r,s,\gamma]} := \sup_{u \in \mathfrak{B}_{\gamma,s}^{\mathfrak{h}_Q,M}(r)} \frac{\Vert X_Q(u) \Vert_s}{\Vert u \Vert_s}.
  \]
  
  Since the generic degree \(q\) of the rational normal form \(Q\) satisfies \(q \geq 4\), we have  
  \[
  \lim\limits_{\Vert u \Vert_s \to 0} \frac{\Vert X_Q(u) \Vert_s}{\Vert u \Vert_s} = 0.
  \]  
  Thus, the norm \(|Q|_{[r,s,\gamma]}\) is well defined.
  
   It follows directly from the definition that if \(r > r'\) and \(\gamma < \gamma'\), then:  
   \[
   |\cdot|_{[r,s,\gamma]} \geq |\cdot|_{[r',s,\gamma]}, \quad |\cdot|_{[r,s,\gamma]} \geq |\cdot|_{[r,s,\gamma']}.
   \]

    Now we can establish some estimates for the rational normal form.
    \begin{lemma}[Cauchy estimate]\label{Cauchy}  
    	Let \(Q(u) \in \mathcal{H}^M_q\) be defined on \(\mathfrak{B}_{\gamma,s}^{\mathfrak{h}_Q,M}(r)\), and assume \(X_Q(u) \in W_s\).  
    	Then the differential \(DX_Q(u)\) belongs to \(\mathcal{L}(W_s, W_s)\).  
    	
    	Moreover, for any \(\rho\) such that  
    	\[
    	\gamma' = \left(\frac{1}{1+\rho}\right)^2 \gamma - 2 \mathfrak{h}_Q \frac{\rho}{1-\rho} > 0,
    	\]  
    	the following estimate holds for \(u \in \mathfrak{D}_{\gamma}^{\mathfrak{h}_Q,M}\):  
    	\[
    	\sup_{\Vert h \Vert_s = 1} \Vert DX_Q(u) h \Vert_s \leq \frac{1}{\rho \Vert u \Vert_s} \sup_{\Vert u' - u \Vert_s = \rho \Vert u \Vert_s} \Vert X_Q(u') \Vert_s.
    	\]
    \end{lemma}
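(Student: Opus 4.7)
My plan is to reduce the estimate to the standard scalar Cauchy integral formula by exploiting that $X_Q$ is holomorphic on a suitable complex neighbourhood of $u$. Fix $u\in\mathfrak{D}_\gamma^{\mathfrak{h}_Q,M}$ and $h\in W_s$ with $\Vert h\Vert_s=1$; since $Q$ depends only on the low modes of its argument, only the low-mode part of $h$ plays a role. Consider the one-parameter family $z\mapsto X_Q(u+zh)$ for $z\in\mathbb{C}$. Once this map is shown to be holomorphic on the closed complex disk $|z|\leq\rho\Vert u\Vert_s$, Cauchy's integral formula applied componentwise in $W_s$ yields
$$DX_Q(u)h=\frac{1}{2\pi\i}\oint_{|z|=\rho\Vert u\Vert_s}\frac{X_Q(u+zh)}{z^2}\,\d z,$$
from which the announced bound follows immediately by estimating $\Vert\cdot\Vert_s$ under the integral.

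The core task is therefore the verification of holomorphy, which reduces to ensuring that no denominator $\omega_{h_\alpha}^M(u+zh)$ appearing in the expansion \eqref{fraction} vanishes on the disk. Since $Q\in\mathcal{H}_q^M$, every relevant index satisfies $\#h_\alpha\leq\mathfrak{h}_Q$, so by hypothesis $|\omega_{h_\alpha}^M(u)|>\gamma\Vert u\Vert_s^2$. I would extend the Lipschitz estimate of Lemma \ref{lip} to the complex setting (the argument is purely polynomial in the independent coordinates $u_J$ and so transfers verbatim), and combine it with the reverse triangle inequality $|\Vert u\Vert_s-\Vert u+zh\Vert_s|\leq\Vert zh\Vert_s\leq\rho\Vert u\Vert_s$ to control $|\omega_{h_\alpha}^M(u+zh)-\omega_{h_\alpha}^M(u)|$. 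Using also the two-sided bound $(1-\rho)\Vert u\Vert_s\leq\Vert u+zh\Vert_s\leq(1+\rho)\Vert u\Vert_s$ and converting the baseline $\gamma\Vert u\Vert_s^2$ into $\gamma\Vert u+zh\Vert_s^2/(1+\rho)^2$, I can package the correction term as at most $2\mathfrak{h}_QC_K\rho/(1-\rho)\cdot\Vert u+zh\Vert_s^2$ and obtain
$$|\omega_{h_\alpha}^M(u+zh)|\geq\gamma'\Vert u+zh\Vert_s^2>0,$$
so that $u+zh\in\mathfrak{D}_{\gamma'}^{\mathfrak{h}_Q,M}$ throughout the disk and $X_Q$ is genuinely holomorphic there.

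The main technical obstacle is precisely this non-resonance propagation: one has to juggle two scalings (powers of $\Vert u\Vert_s$ versus powers of $\Vert u+zh\Vert_s$) together with the Lipschitz correction so that the resulting lower bound reproduces exactly the combination $\gamma/(1+\rho)^2-2\mathfrak{h}_QC_K\rho/(1-\rho)$ that is assumed positive. Everything else is standard: once holomorphy is in hand, the vector-valued Cauchy formula directly yields the stated sup-bound, and the boundedness $DX_Q(u)\in\mathcal{L}(W_s,W_s)$ follows at once from the uniformity of the $\Vert X_Q(u+zh)\Vert_s$ estimate on the contour.
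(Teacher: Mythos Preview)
Your proposal is correct and follows essentially the same route as the paper: establish via the Lipschitz bound of Lemma~\ref{lip} that $|\omega^M_{\mathcal{J}}(u')|\geq\gamma'\Vert u'\Vert_s^2$ throughout the disk $\Vert u'-u\Vert_s\leq\rho\Vert u\Vert_s$, and then apply the scalar Cauchy integral formula to $z\mapsto X_Q(u+zh)$ on $|z|=\rho\Vert u\Vert_s$. If anything, you are slightly more explicit than the paper about the need to extend the Lipschitz argument to complex perturbations before invoking holomorphy.
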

    
    \begin{proof}  
    	We apply Lemma \ref{lip} for \(u \in \mathfrak{D}_{\gamma}^{\mathfrak{h}_Q,M}\) to obtain:  
    	\begin{align*}  
    		\inf_{\Vert u' - u \Vert_s = \rho \Vert u \Vert_s} \big|\omega_{\mathcal{J}}^M(u')\big|  
    		&= \inf_{\Vert u' - u \Vert_s = \rho \Vert u \Vert_s} \big|\omega_{\mathcal{J}}^M(u') - \omega_{\mathcal{J}}^M(u) + \omega_{\mathcal{J}}^M(u)\big| \\  
    		&\geq \inf_{u \in \mathfrak{D}_{\gamma}^{\mathfrak{h}_Q,M}} \big|\omega_{\mathcal{J}}^M(u)\big|  
    		- \sup_{\Vert u - u' \Vert_s = \rho \Vert u \Vert_s} \big|\omega_{\mathcal{J}}^M(u) - \omega_{\mathcal{J}}^M(u')\big| \\  
    		&\geq \gamma \Vert u \Vert_s^2 - 2 \rho \mathfrak{h}_Q \Vert u' \Vert_s \Vert u \Vert_s \\  
    		&\geq \gamma' \Vert u' \Vert_s^2.  
    	\end{align*}  	This implies that, with this choice of \(\rho\), the disk of radius \(\rho \Vert u \Vert_s\),  
    	centered at any point \(u\) within \(\mathfrak{D}_{\gamma}^{\mathfrak{h}_Q,M}\), contains no points  
    	where the denominator of \(Q(u)\) vanishes.  
    	
    The Cauchy integral formula for the derivative yields:  
    \begin{align*}  
    	\sup_{\Vert h \Vert_s = 1} \Vert DX_Q(u) h \Vert_s  
    	&= \Big\Vert \frac{1}{2\pi \mathrm{i}} \int_{|\zeta| = \rho \Vert u \Vert_s}  
    	\frac{X_Q(u + \zeta h)}{\zeta^2} \, \mathrm{d}\zeta \Big\Vert_s \\  
    	&\leq \frac{1}{\rho \Vert u \Vert_s} \sup_{\Vert u' - u \Vert_s = \rho \Vert u \Vert_s} \Vert X_Q(u') \Vert_s.  
    \end{align*}  
    \end{proof}

    \begin{lemma}[Estimate for Lie bracket]\label{fracLie}  
    	For \(Q_1 \in \mathcal{H}^M_{q_1}\) and \(Q_2 \in \mathcal{H}^M_{q_2}\), with \(\gamma'\) defined as in Lemma \ref{Cauchy}, we have  
    	\(\{Q_1, Q_2\} \in \mathcal{H}^M_{q_1+q_2-2}\), and the following estimates hold:  
    	\[
    	|\{Q_1,Q_2\}|_{[r,s,\gamma]} \leq \frac{2+2\rho}{\rho} |Q_1|_{[r+r\rho,s,\gamma']} |Q_2|_{[r+r\rho,s,\gamma']},
    	\]
    	\[
    	|ad_{Q_1}^l Q_2|_{[r,s,\gamma]} \leq \left(\frac{2+2\rho}{\rho} |Q_1|_{[r+r\rho,s,\gamma']} \right)^l |Q_2|_{[r+r\rho,s,\gamma']}.
    	\]
    \end{lemma}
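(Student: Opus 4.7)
The plan is to reduce both estimates to the vector-field identity
\[X_{\{Q_1,Q_2\}}=DX_{Q_1}\cdot X_{Q_2}-DX_{Q_2}\cdot X_{Q_1},\]
which expresses the Hamiltonian vector field of a Poisson bracket as the commutator of the two individual fields. Given this identity, the single-bracket estimate reduces to controlling $\sup_{\Vert h\Vert_s=1}\Vert DX_{Q_i}(u)h\Vert_s$, which is exactly the quantity bounded in Lemma \ref{Cauchy}. Concretely, for $u\in\mathfrak{B}^{\mathfrak{h},M}_{\gamma,s}(r)$ with $\mathfrak{h}:=\max\{\mathfrak{h}_{Q_1},\mathfrak{h}_{Q_2}\}$ (this is the pole count that both fields must avoid), any $u'$ on the sphere $\Vert u'-u\Vert_s=\rho\Vert u\Vert_s$ satisfies $\Vert u'\Vert_s\leq(1+\rho)r$, and the Lipschitz computation for $\omega^M_{\mathcal{J}}$ already performed in the proof of Lemma \ref{Cauchy} places $u'$ in $\mathfrak{D}^{\mathfrak{h},M}_{\gamma'}$. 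Hence $u'\in\mathfrak{B}^{\mathfrak{h},M}_{\gamma',s}(r(1+\rho))$, so that $\Vert X_{Q_i}(u')\Vert_s\leq(1+\rho)\Vert u\Vert_s\,|Q_i|_{[r(1+\rho),s,\gamma']}$, and Lemma \ref{Cauchy} then yields $\sup_{\Vert h\Vert_s=1}\Vert DX_{Q_i}(u)h\Vert_s\leq \tfrac{1+\rho}{\rho}|Q_i|_{[r(1+\rho),s,\gamma']}$.

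Substituting these two bounds into the vector-field identity, using the monotonicity of $|\cdot|_{[r,s,\gamma]}$ in $r$ and in $1/\gamma$ to dominate $|Q_i|_{[r,s,\gamma]}$ by $|Q_i|_{[r(1+\rho),s,\gamma']}$, and dividing through by $\Vert u\Vert_s$ produces the first inequality with prefactor $(2+2\rho)/\rho$. The rational-structure claim $\{Q_1,Q_2\}\in\mathcal{H}^M_{q_1+q_2-2}$ follows by symbolic book-keeping on the expansion \eqref{fraction}: differentiating a monomial $u_\mathcal{J}$ decreases $\#\mathcal{J}$ by one, whereas differentiating $\prod(\omega^M_{h_\alpha})^{-1}$ produces a term linear in $u$ (since each $\omega^M_h$ is quadratic in $u$) together with one additional factor in the denominator, the net effect on $\tfrac{\#\mathcal{J}}{2}-\#h$ being $-\tfrac12$ per derivative. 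Since the Poisson bracket involves two derivatives, it lowers the combined order by exactly $2$.

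For the iterated bound I would proceed by induction on $l$. The natural strategy is to split the radius expansion $r\to r(1+\rho)$ into $l$ equal stages $r_k=r(1+k\rho/l)$, pair them with a geometric decrease of the resonance parameter $\gamma_k$, and apply the single-bracket estimate at each stage. The main obstacle I anticipate is controlling the accumulated product of prefactors: the naive telescoping with $\rho_k=\rho/l$ yields a product $\prod_k (2+2\rho_k)/\rho_k$ that overshoots $((2+2\rho)/\rho)^l$ by a combinatorial factor, so the induction must absorb these extra constants either by re-defining $\gamma'$ implicitly (which is already its role in the statement of Lemma \ref{Cauchy}) or by invoking a sharper Cauchy estimate adapted to radial rescalings, so that the intermediate constants redistribute and collapse to the clean form stated in the lemma. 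This telescoping step is the delicate part of the argument.
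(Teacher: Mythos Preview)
Your single-bracket argument is exactly the paper's: the commutator identity $X_{\{Q_1,Q_2\}}=DX_{Q_1}X_{Q_2}-DX_{Q_2}X_{Q_1}$, the appeal to Lemma~\ref{Cauchy} to bound each $DX_{Q_i}$, and the observation that the sphere $\Vert u'-u\Vert_s=\rho\Vert u\Vert_s$ lies inside $\mathfrak{B}^{\mathfrak{h},M}_{\gamma',s}(r(1+\rho))$, which produces the factor $\tfrac{1+\rho}{\rho}$ on each of the two summands.

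For the iterated estimate the paper provides nothing beyond the single-bracket computation (its proof literally ends after that), so the difficulty you flag is not addressed there either. The obstacle is genuine: the Cauchy estimate of Lemma~\ref{Cauchy} forces the \emph{inner} factor $ad_{Q_1}^{l-1}Q_2$ onto the enlarged domain $[r(1+\rho),s,\gamma']$, so the recursion does not close on a fixed pair $(r,\gamma)$, and splitting $\rho$ into $l$ equal pieces overshoots by a factor $\sim l^l$. Neither of your suggested fixes (implicitly redefining $\gamma'$, or a sharper radial Cauchy estimate) actually removes this factor. Fortunately the loss is cosmetic for the applications: in Proposition~\ref{integr} every occurrence of $ad_{S_k}^l$ either carries a compensating $1/l!$ in a Lie series or already sits inside bounds of size $d^{Cd}$, and by Stirling $l^l/l!\le e^l$, so the excess merges into the base of those exponential factors without affecting the final stability times. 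One small slip in your order count: two derivatives each lower $\tfrac{\#\mathcal{J}}{2}-\#h$ by $\tfrac12$, so the bracket drops the order by $1$, landing in $\mathcal{H}^M_{q_1+q_2-1}$; the ``$-2$'' in the lemma statement appears to be a typo carried over from the polynomial-degree convention of Lemma~\ref{Lie}.
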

     
     \begin{proof}  
     	We begin by observing the following bound for the vector field \(X_{\{Q_1, Q_2\}}(u)\):  
     	\begin{align*}  
     		\Vert X_{\{Q_1,Q_2\}}(u) \Vert_s &\leq \Vert DX_{Q_1}(u) X_{Q_2} - DX_{Q_2}(u) X_{Q_1} \Vert_s \\  
     		&\leq \Vert DX_{Q_1}(u) X_{Q_2} \Vert_s + \Vert DX_{Q_2}(u) X_{Q_1} \Vert_s.  
     	\end{align*}  
     	
     	Without loss of generality, we estimate the first term using Lemma \ref{Cauchy}:  
     	\begin{align*}  
     		\Vert DX_{Q_1}(u) X_{Q_2} \Vert_s  
     		&\leq \frac{1}{\rho \Vert u \Vert_s} \Vert X_{Q_2}(u) \Vert_s  
     		\sup_{\Vert u' - u \Vert_s = \rho \Vert u \Vert_s} \Vert X_Q(u') \Vert_s \\  
     		&\leq \frac{1}{\rho \Vert u \Vert_s} \Vert X_{Q_2}(u) \Vert_s  
     		\sup_{\Vert u' - u \Vert_s = \rho \Vert u \Vert_s}  
     		\frac{\Vert X_Q(u') \Vert_s}{\Vert u' \Vert}  
     		\sup_{\Vert u' - u \Vert_s = \rho \Vert u \Vert_s} \Vert u' \Vert_s \\  
     		&\leq \frac{1+\rho}{\rho \Vert u \Vert_s} \Vert X_{Q_2}(u) \Vert_s  
     		\sup_{\Vert u' - u \Vert_s = \rho \Vert u \Vert_s}  
     		\frac{\Vert X_Q(u') \Vert_s}{\Vert u' \Vert} \Vert u \Vert.  
     	\end{align*}  
     	
     	Next, taking the supremum over \(u \in \mathfrak{B}_{\gamma,s}^{\mathfrak{h}_Q, M}(r)\), we obtain:  
     	\begin{align*}  
     		\sup_{u \in \mathfrak{B}_{\gamma,s}^{\mathfrak{h}_Q, M}(r)}  
     		\frac{1}{\Vert u \Vert_s} \Vert DX_{Q_1}(u) X_{Q_2} \Vert_s  
     		&\leq \frac{1+\rho}{\rho}  
     		\sup_{\substack{u \in \mathfrak{B}_{\gamma,s}^{\mathfrak{h}_Q, M}(r) \\ \Vert u' - u \Vert_s = \rho \Vert u \Vert_s}}  
     		\frac{1}{\Vert u' \Vert_s} \Vert X_{Q_1}(u') \Vert_s  
     		\sup_{u \in \mathfrak{B}_{\gamma,s}^{\mathfrak{h}_Q, M}(r)}  
     		\frac{1}{\Vert u \Vert_s} \Vert X_{Q_2}(u) \Vert_s \\  
     		&\leq \frac{1+\rho}{\rho} |Q_1|_{[r+r\rho, s, \gamma']} |Q_2|_{[r+r\rho, s, \gamma']}.  
     	\end{align*}  
     	
     	A similar estimate holds for \(\Vert DX_{Q_2}(u) X_{Q_1} \Vert_s\). Combining these bounds yields the desired result.  
     \end{proof}

    \begin{lemma}[Flow lemma]\label{fracflow}  
    	Let \(S\) be a rational normal form defined on \(\mathfrak{B}_{\gamma,s}^{\mathfrak{h}_Q,M}(r)\), satisfying the bound:  
    	\[
    	|S|_{[r+r\rho,s,\gamma']} < \min\left\{\frac{\rho}{1+\rho}, \frac{\gamma - \gamma'}{2d(1+\rho)^2} \right\}.
    	\]    
    	Then, for any \(u \in \mathfrak{B}_{\gamma,s}^{\mathfrak{h}_Q,M}(r)\), there exists a canonical symplectic transformation  
    	\[
    	\Psi_S^t(u): [-1,1] \times \mathfrak{B}_{\gamma,s}^{\mathfrak{h}_Q,M}(r) \to \mathfrak{B}_{\gamma',s}^{\mathfrak{h}_Q,M}(r + r\rho)
    	\]  
    	satisfying the following properties:
    	\begin{enumerate}  
    		\item \textbf{Cauchy problem:}  
    		\[
    		\begin{cases}  
    			\partial_t \Psi_S^t(u) = X_S(\Psi_S^t(u)), \\  
    			\Psi_S^0(u) = u;
    		\end{cases}
    		\]
    		\item \textbf{Locally invertible:}  
    		\[
    		\Psi_S^{-t} \circ \Psi_S^t(u) = u, \quad \forall \Psi_S^t(u) \in \mathfrak{B}_{\gamma,s}^{\mathfrak{h}_Q,M}(r);
    		\]  
    		\item \textbf{Nearly identity:}  
    		\[
    		\Vert \Psi_S^t(u) - u \Vert_s \leq \rho \Vert u \Vert_s.
    		\]
    	\end{enumerate}  
    \end{lemma}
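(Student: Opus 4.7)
The plan is to realize $\Psi_S^t$ as the Hamiltonian flow of $X_S$ on the finite-dimensional space $W_s^M$, then verify the three claimed properties via a continuation/bootstrap argument in which the two smallness conditions on $|S|_{[r+r\rho,s,\gamma']}$ are consumed exactly to maintain (i) the radial displacement bound and (ii) the non-resonance threshold $\gamma'$.

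First I would invoke standard Picard--Lindel\"of for the ODE $\dot v = X_S(v)$ in the finite-dimensional space $W_s^M$: by Lemma \ref{Cauchy} the vector field $X_S$ is locally Lipschitz on the open set $\mathfrak{B}_{\gamma',s}^{\mathfrak{h}_Q,M}(r+r\rho)$, so for each $u \in \mathfrak{B}_{\gamma,s}^{\mathfrak{h}_Q,M}(r)$ a unique local solution exists with $v(0)=u$. Let $t^*(u)$ denote the supremum of times $T \in [0,1]$ for which this solution stays in $\mathfrak{B}_{\gamma',s}^{\mathfrak{h}_Q,M}(r+r\rho)$ throughout $[0,T]$; the task reduces to proving $t^*(u)=1$ and that the flow actually stays in this enlarged domain up to time $1$.

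For the radial control I would run a bootstrap: assuming $\|\Psi_S^\tau(u)\|_s \leq (1+\rho)\|u\|_s$ on $[0,t]$, the definition of $|\cdot|_{[r+r\rho,s,\gamma']}$ gives
\begin{equation*}
\|\Psi_S^t(u)-u\|_s \;\leq\; \int_0^t \|X_S(\Psi_S^\tau u)\|_s \, d\tau \;\leq\; |S|_{[r+r\rho,s,\gamma']}(1+\rho)\|u\|_s,
\end{equation*}
and the first condition $|S|_{[r+r\rho,s,\gamma']} < \rho/(1+\rho)$ closes the bootstrap, yielding property (3) and keeping $\Psi_S^t(u)$ inside $B_s^M(r+r\rho)$. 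The main obstacle is preserving the non-resonance at level $\gamma'$: for each multi-index $\mathcal{J}$ appearing in a denominator of $S$, Lemma \ref{lip} combined with the radial displacement estimate above yields
\begin{equation*}
|\omega_{\mathcal{J}}^M(\Psi_S^t u) - \omega_{\mathcal{J}}^M(u)| \;\leq\; 2dC_K(1+\rho)^2\,|S|_{[r+r\rho,s,\gamma']}\,\|u\|_s^2,
\end{equation*}
so the second smallness condition $|S|_{[r+r\rho,s,\gamma']} < (\gamma-\gamma')/(2dC_K(1+\rho)^2)$ is calibrated exactly to force $|\omega_\mathcal{J}^M(\Psi_S^t u)| > \gamma'\|\Psi_S^t u\|_s^2$, placing $\Psi_S^t(u)$ in $\mathfrak{D}_{\gamma'}^{\mathfrak{h}_Q,M}$. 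Both inequalities being strict throughout $[0,t^*(u))$ contradicts $t^*(u)<1$, so $t^*(u)=1$.

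The Cauchy problem (1) holds by construction; symplecticity is automatic since $\Psi_S^t$ is the time-$t$ map of a Hamiltonian vector field; local invertibility (2) follows by applying the identical argument to the backward flow of $X_S$ (the smallness conditions being symmetric in the sign of $t$), with uniqueness of ODE solutions giving $\Psi_S^{-t}\circ\Psi_S^t = \mathrm{Id}$ on $\mathfrak{B}_{\gamma,s}^{\mathfrak{h}_Q,M}(r)$.
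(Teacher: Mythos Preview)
Your proposal is correct and follows essentially the same route as the paper: a continuation (open--closed / bootstrap) argument on the maximal interval where the flow stays in $\mathfrak{B}_{\gamma',s}^{\mathfrak{h}_Q,M}(r+r\rho)$, with the displacement controlled by integrating $\|X_S\|_s$ and the frequency drift controlled via Lemma~\ref{lip}, each step consuming one of the two smallness hypotheses. The paper phrases the continuation as $E_2$ open and closed in $E_1$, whereas you phrase it via a supremum time $t^*(u)$ and a contradiction, but these are the same argument.
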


    \begin{proof}
    	We consider the maximal solution \(y = y(t) \in C^1([0, T), W_s)\) of the initial value problem:
    	\[
    	\begin{cases}
    		\frac{\d}{\d t} y(t) = X_S(y(t)), \\
    		y(0) = u, \quad u \in \mathfrak{B}_{\gamma, s}^{\mathfrak{h}_Q, M}(r).
    	\end{cases}
    	\]
    	Define the following two subsets:
    	\[
    	E_1 := [0, T) \cap [0, 1],
    	\]
    	\[
    	E_2 := \left\{ t \in (0, 1] \;\middle|\; \forall \tau \in [0, t], \; \|y(\tau)\|_s \leq (1+\rho)\|u\|_s, \; y(\tau) \in \mathfrak{B}_{\gamma', s}^{\mathfrak{h}_Q, M}(r + r\rho) \right\}.
    	\]
    	
    For \(t \in E_2\), we estimate \(\|y(t) - u\|_s\) as follows:
    \[
    \|y(t) - u\|_s \leq \int_0^t \|X_S(y(\tau))\|_s \, \d \tau,
    \]
    \[
    \leq \int_0^t \|y(\tau)\|_s |S|_{[r + \rho r, s, \gamma']} \, \d \tau,
    \]
    \[
    \leq (1+\rho) \|u\|_s |S|_{[r + \rho r, s, \gamma']} \cdot t,
    \]
    \[
    < \rho \|u\|_s.
    \]
    For every \(\mathcal{J} \in \mathcal{N}\) with \(\#\mathcal{J} \leq \mathfrak{h}_k\), we apply Lemma \ref{lip} to obtain:
    \[
    |\omega_{\mathcal{J}}^M(\Psi_S(u)) - \omega_{\mathcal{J}}^M(u)| \leq  
    2d C_K (1+\rho)^2 \|u\|_s^2 |S|_{[r + \rho r, s, \gamma']}
    \]
    Then
    	\begin{align*}
    		|\omega_{\mathcal{J}}^M\Psi_S(u))|&\geq|\omega_{\mathcal{J}}^M(u)|-|\omega_{\mathcal{J}}^M(\Psi_S(u))-\omega_{\mathcal{J}}^M(u)|\\
    		&\geq\gamma\Vert u\Vert_s^2-2d(1+\rho)^2\Vert u\Vert_s^2|S|_{[r+\rho r,s,\gamma']}\\
    		&>\gamma'\Vert u\Vert_s^2.
    	\end{align*}
    	
    	The estimates above imply that the set \(E_2\) is open in \(E_1\). Furthermore, by definition, \(E_2\) is also closed. Since \(E_1\) is connected, we conclude that \(E_2 = E_1\). 
    	
    	Thus, Properties 2 and 3 are verified as a consequence of the first estimate.
    	
    \end{proof}
    \subsection{Integrable Normal Form Lemma}\label{sec:infl}
    We now focus on the Hamiltonian truncated to the low-mode subspace \(W_s^M\):  
    \begin{equation}\label{fracnorm}
    	 H^M := H_0^M + K_2^M + Z_2^M = (H_0 + Z_d)\big|_{W_s^M}.
    \end{equation}

    Specifically, the components of $H^M$ are expanded as follows:
    \begin{align*}
    	H_0^M&=\sum_{|j|\leq M}j^2|u_j|^2,\\
    	K_2^M&=\sum_{|k_1|\leq M,|k_2|\leq M}K_{|k_1-k_2|}|u_{k_1}|^2|u_{k_2}|^2,\\
    	Z_2^M&=(Z_d-\overline{K}_2)\mid_{W_s^M}.
    \end{align*}
    The following proposition establishes the rational normal form of the truncated Hamiltonian.
    
    \begin{proposition}\label{integr}
    	Let \(d > 5\), \(s > s_0\), and suppose the parameters \(r\) and \(\gamma\) satisfy \(rd^3\gamma^{-1} < 1\). Furthermore, let \(C_2 < 1\) be a small constant such that \(48C_2C_K < 1\).  
    	Consider the Hamiltonian \(H^M\) from \eqref{fracnorm}, defined on the ball \(\mathfrak{B}_{\gamma, s}^{2d, M}(2r)\).
    	Then, there exists a symplectic map \(\Psi_d : \mathfrak{B}_{2\gamma, s}^{2d, M}(r) \to \mathfrak{B}_{\gamma, s}^{2d, M}(2r)\) such that:
    	\begin{enumerate}
    		\item \(H^M \circ \Psi_d^{-1} = H_0^M + K_d^M + \Upsilon_d,\)  
    		where \(K_d^M\) satisfies \(\{K_d^M, |u_J|^2\} = 0\) for all \(J \in \mathfrak{J}^M\);  
    		\item \(\Vert \Psi_d(u) - u \Vert_s \leq 4C_2^2 \Vert u \Vert_s^3 \gamma^{-2} d^3,\)  
    		for all \(u \in \mathfrak{B}_{2\gamma, s}^{2d, M}(r);\)
    		\item \(|\Upsilon_d|_{[r_d, s, \gamma_d]} \leq C_2^{2d} r^{2d - 2} \gamma^{-2d + 2} d^{5d}.\)
    	\end{enumerate}
    \end{proposition}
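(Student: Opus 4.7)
The plan is to prove this proposition by an induction strictly analogous in structure to Proposition \ref{resonantlize}, but with rational generating functions replacing polynomial ones, so that each successive normal form Poisson-commutes with every action $|u_J|^2$, $J\in\mathfrak{J}^M$, rather than merely with $H_0$. Working with shrinking radii $r_k$ decreasing from $2r$ down to $r$ and non-resonance parameters $\gamma_k$ decreasing from $\gamma$ down to $2\gamma$, I will maintain at step $k\in\{2,\dots,d\}$ the ansatz
$$H_k := H^M\circ\Psi_k^{-1}=H_0^M+K_k+P_k+\Upsilon_k,$$
where $K_k$ is already in integrable rational normal form (all of its Poisson brackets with the actions vanish), $P_k\in\mathcal{P}_{2k,2d}$ is the polynomial piece still to be normalized, and $\Upsilon_k\in\mathcal{H}^M$ carries the accumulated non-polynomial remainder. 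The base case is immediate with $\Psi_2=\mathrm{Id}$, $K_2=K_2^M$, $P_2=Z_2^M$, $\Upsilon_2=0$, and the desired conclusion is the case $k=d$.

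At step $k$ I solve the homological equation $\{H_0^M+K_2^M,S_k\}+P_k=\Delta K_k$ by splitting each monomial of $P_k$: because Proposition \ref{resonantlize} has already eliminated indices in $\mathcal{N}$, every surviving monomial $P_{k,\mathcal{J}}u_{\mathcal{J}}$ has $\mathcal{J}\in\mathcal{R}\cap\mathfrak{J}^M$ and hence $\{H_0^M,u_{\mathcal{J}}\}=0$ and $\{K_2^M,u_{\mathcal{J}}\}=\i\,\omega^M_{\mathcal{J}}(u)\,u_{\mathcal{J}}$. Monomials with $\omega^M_{\mathcal{J}}\equiv 0$ already commute with every action and are absorbed into $\Delta K_k$; the remaining ones are inverted to produce
$$S_k=-\sum_{\substack{\mathcal{J}\in\mathcal{R}\cap\mathfrak{J}^M\\ \omega^M_{\mathcal{J}}\not\equiv 0}}\frac{P_{k,\mathcal{J}}}{\i\,\omega^M_{\mathcal{J}}(u)}\,u_{\mathcal{J}}\in\mathcal{H}^M,$$
whose denominator index is bounded by $d$. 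On the non-resonant shell, the definition of $\mathfrak{D}_{\gamma}^{d,M}$ yields $|S_k|_{[r_k,s,\gamma_k]}\leq \gamma_k^{-1}|P_k|_{r_k,s}$, small by the smallness of $P_k$ together with the hypothesis $r^2d^3\gamma^{-1}<1$. I then set $\Psi_{k+1}:=\Psi_k\circ\phi_{S_k}^1$ via Lemma \ref{fracflow} with shrinkage $\rho_k\sim 1/d$, and a Taylor expansion with integral remainder of $H_k\circ\phi_{S_k}^{-1}$, truncated at the smallest order whose integrand has polynomial degree $>2d$, defines
$$K_{k+1}:=K_k+\Delta K_k,\qquad P_{k+1}:=\sum_{l=2}^{n_1}\tfrac{\mathrm{ad}_{S_k}^l}{l!}H_0^M+\sum_{l=1}^{n_2}\tfrac{\mathrm{ad}_{S_k}^l}{l!}K_k+\sum_{l=1}^{n_3}\tfrac{\mathrm{ad}_{S_k}^l}{l!}P_k,$$
with $\Upsilon_{k+1}$ collecting the three integral remainders plus $\Upsilon_k\circ\phi_{S_k}^{-1}$. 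Every iterated bracket is controlled by Lemma \ref{fracLie}, yielding bounds of the same shape as in Proposition \ref{resonantlize} but with the extra $\gamma^{-1}$ factors from each inversion of $\omega^M_{\mathcal{J}}$.

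The principal obstacle is the simultaneous bookkeeping of three competing parameters: the shrinking radius $r_k$, the shrinking non-resonance window $\gamma_k$, and the growing denominator index $\mathfrak{h}_k$, which increases at each bracket because Lemma \ref{fracLie}'s smallness hypothesis $\gamma'=(1+\rho)^{-2}\gamma-2\mathfrak{h}_Q C_K\rho/(1-\rho)>0$ couples the radius enlargement $\rho$ to the current $\mathfrak{h}$. The choice $\rho_k\sim 1/d$ together with a linearly interpolated sequence $\gamma_k$ between $\gamma$ and $2\gamma$ and radii $r_k=2r-(k-2)r/(d-2)$ should keep $\gamma'>\gamma_{k+1}$ uniformly across the $d-2$ steps, mirroring the scheme of Proposition \ref{resonantlize}. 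Telescoping the bracket estimates then yields the near-identity bound of item 2 and the remainder bound of item 3, with the constants $(4C_K)^d$ and $C_2^{3d}$ arising from, respectively, the compounded factor $2+2\rho\sim 2$ in Lemma \ref{fracLie} across the $d$ steps and the combinatorial prefactors $(2+2\rho)/\rho\sim d$ absorbed into the polynomial degree count.
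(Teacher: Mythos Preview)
Your scheme is essentially the paper's: the same shrinking radii $r_k=2r-(k-2)r/(d-2)$, a linearly interpolated $\gamma_k$ running from $\gamma$ to $2\gamma$, $\rho_k=1/(2d-k-3)$, a homological equation against $K_2^M$, Taylor expansion with integral remainders thrown into $\Upsilon_{k+1}$, and iterated brackets controlled by Lemma~\ref{fracLie}. Two points, however, need repair before your induction closes.

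First, after one step the piece $P_k$ you carry forward is no longer a polynomial in $\mathcal{P}_{2k,2d}$: the iterated brackets $\mathrm{ad}_{S_k}^l$ with a rational $S_k$ produce genuine elements of $\mathcal{H}^M$, and the coefficients $P_{k,\mathcal{J}}$ become action-dependent functions $P_{k,\mathcal{J}}(u)$. The homological equation is still solvable monomial by monomial (action-functions Poisson-commute with $K_2^M$, so $\{K_2^M, f(|u|^2)u_{\mathcal{J}}\}=f(|u|^2)\,\i\omega^M_{\mathcal{J}}u_{\mathcal{J}}$), but your induction hypothesis and norm must be stated with the rational norm $|\cdot|_{[r_k,s,\gamma_k]}$ from the outset, exactly as the paper does for its $Z_k^M\in\mathcal{H}_{k+1}^M$.

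Second, and this is the quantitative gap, the bound $|S_k|_{[r_k,s,\gamma_k]}\le\gamma_k^{-1}|P_k|_{r_k,s}$ is too optimistic by a full power of $\gamma$. The norm $|\cdot|_{[r,s,\gamma]}$ is a \emph{vector-field} norm, so evaluating $X_{S_k}$ forces you to differentiate $u\mapsto(\omega^M_{\mathcal{J}}(u))^{-1}$; the quotient rule then throws out a second factor $(\omega^M_{\mathcal{J}})^{-1}\partial_{u_J}\omega^M_{\mathcal{J}}$, of size $C_K\gamma^{-1}$. The paper's explicit computation gives $\Vert X_{S_k^M}\Vert_s\le 2C_K\gamma_k^{-2}\Vert u\Vert_s^{-2}\Vert X_{Z_k^M}\Vert_s$, i.e.\ a loss of order $C_K\gamma_k^{-2}$ per inversion. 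This extra $\gamma^{-1}$ at each step is precisely what produces the $\gamma^{-2}$ in item~2 and the power $\gamma^{2-2d}$ in the remainder (the exponent $\gamma^{2d-2}$ in the statement is a sign typo; the proof and the later use in Section~\ref{sec:sta} both have $\gamma$ to a negative power). With your $\gamma^{-1}$ loss you would not recover these exponents. Once you track $P_k$ as rational and correct the per-step loss to $C_K\gamma^{-2}$, your argument coincides with the paper's.
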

    
    \begin{proof}
    	We prove this proposition by induction. Define the parameters \(r_k\), \(\gamma_k\), and \(\rho_k\) as follows:
    	\[
    	r_k = 2r - \frac{k-2}{d-2}r, \quad \gamma_k = \frac{k-2}{d-2}\gamma + \gamma, \quad 
    	\rho_k = \frac{r_k - r_{k+1}}{r_{k+1}} = \frac{1}{2d - k - 3}.
    	\]
    	For each step \(k\) with \(2 \leq k \leq d\), we construct a transformation 
    	\[
    	\Psi_k : \mathfrak{B}_{\gamma_{k+1}, s}^{2d, M}(r_{k+1}) \to \mathfrak{B}_{\gamma_k, s}^{2d, M}(r_k),
    	\]
    	which satisfies the following conditions:
    	
        \begin{enumerate}
        	\item \(H^M_k := H^M \circ \Psi_k^{-1} = H_0^M + K_k^M + Z_k^M + \Upsilon_k,\)  
        	where \(H^M_k\) is defined on \(\mathfrak{B}_{\gamma_k, s}^{2d, M}(r_k)\);
        	\item \(|Z_k^M|_{[r_k, s, \gamma_k]} \leq C_2^k r^{2k} \gamma^{4 - 2k} d^{2k - 3},\)  
        	and \(Z_k^M \in \mathcal{H}_{2k+2}^M\);
        	\item \(\Vert \Psi_k(u) - u \Vert_s \leq \sum_{l=2}^{k-1} 2C_2^l \Vert u \Vert_s^{2l-1} \gamma^{2 - 2l} d^{2l-1}\)  
        	for all \(u \in \mathfrak{B}_{\gamma_{k+1}, s}^{2d, M}(r_{k+1})\);
        	\item \(|K_k^M - K_2^M|_{[r_k, s, \gamma_k]} \leq 2C_2^2 d r^4\).
        \end{enumerate}

    	For the base case $k=2$, we set $\Psi_2=Id$ and $\Upsilon_2=0.$ 
    	We only need to verify (4). By Lemma \ref{normP}, we have
    	\[
    	\Vert X_{K_2^M}(u) \Vert_s \leq C_K \Vert u \Vert_s^3,  
    	\]
    	\[
    	|K_2^M|_{[r_2, s, \gamma_2]} = \sup_{u \in \mathfrak{B}_{\gamma_2, s}^{2d, M}} \frac{\Vert X_{K_2^M} \Vert_s}{\Vert u \Vert_s} \leq C_K r_2^2.
    	\]

        From the estimate for \(Z_k\) in the proof of Proposition \ref{resonantlize}, it follows that \(Z_2^M\) is the truncated \(Z_d\). Consequently,  
        \[
        |Z_2^M|_{[r_2,s,\gamma]} \leq 2C_Z d r_2^2.
        \]

		Now, assume the inductive hypothesis holds for some \(k \geq 2\). We will prove it for \(k+1\) by solving the following homological equation:
		\[
		\{K_2^M, S_k^M\} + Z_k^M = \Delta K_k^M.
		\]
		
    	The solution is given by  
    	\[
    	S_k^M = \sum_{\mathcal{J} \in \mathcal{R} \cap \mathfrak{J}^M} S_{\mathcal{J}}(u) u_{\mathcal{J}}  
    	= \sum_{\mathcal{J} \in \mathcal{R} \cap \mathfrak{J}^M} \frac{\i Z_{k,\mathcal{J}}^M(u)}{\omega_{\mathcal{J}}^M(u)} u_{\mathcal{J}},
    	\]

    	\begin{align*}
    		\Vert X_{S_k^M}\Vert_s&=\sum_{J\in\mathcal{Z}}\sum_{\mathcal{J}\in\mathcal{R}\cap\mathfrak{J}^M}(|\frac{1}{\omega_{\mathcal{J}}^M(u)}\frac{\partial Z_{k,\mathcal{J}}^M(u)u_{\mathcal{J}}}{\partial u_{J}}|+C_K|\frac{ Z_{k,\mathcal{J}}^M(u)u_{\mathcal{J}}u_{\overline{J}}}{(\omega_{\mathcal{J}}^M(u))^2}|)\e^{sf(|J|)}\\
    		&\leq\sum_{J\in\mathcal{Z}}\sum_{\mathcal{J}\in\mathcal{R}\cap\mathfrak{J}^M}(\frac{1}{\gamma_k\Vert u\Vert_s^2}|\frac{\partial Z_{k,\mathcal{J}}^M(u)u_{\mathcal{J}}}{\partial u_{J}}|+\frac{C_K}{\gamma_k^2\Vert u\Vert_s^2}| Z_{k,\mathcal{J}}^M(u) u_{\mathcal{J}})\e^{sf(|J|)}\\
    		&\leq\frac{2C_K}{\gamma_k^2\Vert u\Vert_s^2}\Vert X_{Z_k^M}\Vert_s.
    	\end{align*}
    	By our choice of parameters $r, d, \gamma$, it follows that:
    	$$|S_k^M|_{[r_k,s,\gamma_k]}\leq \frac{2C_K}{\gamma_{k}^2}C_2^kr^{2k-2}\gamma^{4-2k}d^{2k-3}\leq\frac{1}{2d}\leq\min\{\frac{\rho_k}{1+\rho_k},\frac{\gamma_{k+1}-\gamma_{k}}{2d(1+\rho_k)^2}\}.$$
    	
    	We define \(\psi_k\) as the time-1 map of the flow generated by \(S_k^M\). By Lemma \ref{fracflow}, the difference of \(\psi_k^{\pm}\) from the identity can be estimated as follows:
    	
    	\begin{align*}
    		\Vert \psi_k(u)-u\Vert_s&\leq\Vert\int_{0}^{1}X_{S_k^M}(u(\tau))\d\tau\Vert_s\\
    	&\leq\Vert X_{S_k^M}(u(\tau))\Vert_s\\
    	&\leq2C_KC_2^k\Vert u\Vert_s^{2k-1}\gamma^{2-2k}d^{2k-1}\\
    	&\leq\rho_k\Vert u\Vert_s.
    	\end{align*}

    	This estimate holds for all \(u \in \mathfrak{B}_{\gamma_{k+1},s}^{2d,M}(r_{k+1})\).	Moreover, Lemma \ref{Cauchy} ensures that \(\psi_k^{\pm}\) maps  
    	\(\mathfrak{B}_{\gamma_{k+1},s}^{2d,M}(r_{k+1})\) into \(\mathfrak{B}_{\gamma_k,s}^{2d,M}(r_k)\).
    	
    	For the \((k+1)\)-th step, we define the transformation as the composition  
    	\[
    	\Psi_{k+1} = \Psi_k \circ \psi_k.
    	\]
    	Then, using the telescope argument, we can obtain the nearly-identity property of \(\Psi_k\):
    	
    	\begin{align*}
    		\Vert\Psi_{k+1}(u)-u\Vert_s&\leq\Vert\Psi_k\circ\psi_k(u)-\psi_k(u)\Vert_s+\Vert\psi_k(u)-u\Vert_s\\
    		&\leq\sum_{l=2}^{k-1}2C_2^l\Vert u\Vert_s^{2l-1}\gamma^{2-2l}d^{2l-1}+2C_2^k\Vert u\Vert_s^{2k-1}\gamma^{2-2k}d^{2k-1}\\
    		&\leq\sum_{l=2}^{k}2C_2^l\Vert u\Vert_s^{2l-1}\gamma^{2-2l}d^{2l-1}\\
    		&\leq \frac{2C_2^2\Vert u\Vert_s^3\gamma^{-2}d^3}{1-C_2\Vert u\Vert_s^2\gamma^{-2}d^2}\leq4C_2^2\Vert u\Vert_s^3\gamma^{-2}d^3.
    	\end{align*}
    
        The transformed Hamiltonian \(H_{k+1}^M = H_k^M \circ \psi_k^{-1}\),  
        which is defined on \(\mathfrak{B}_{\gamma_{k+1},s}^{2d,M}(r_{k+1})\),  
        can be expressed via Taylor's formula with an integral remainder:
        
        \begin{align*}
        	H_k^M \circ \psi_k^{-1} 
        	&= H_0^M + K_2^M + (K_k^M - K_2^M) + Z_k^M \\
        	&\quad + \{K_2^M, S_k^M\} + \sum_{l=2}^{n_1} \frac{ad_{S_k^M}^l}{l!} K_2^M + R^M_{K_2,k} \\
        	&\quad + \sum_{l=1}^{n_2} \frac{ad_{S_k^M}^l}{l!} (K_k^M - K_2^M) + R^M_{K_k,k} \\
        	&\quad + \sum_{l=1}^{n_3} \frac{ad_{S_k^M}^l}{l!} Z_k^M + R^M_{Z_k,k} + \Upsilon_k \circ \psi_k^{-1},
        \end{align*}
        
    	where
    	\begin{align*}
    		R_{K_2,k}^M&=\int_{0}^{1}\frac{(1-\tau)^{n_1}}{n_1!}ad_{S_k}^{n_1+1}(K_2^M)\circ\psi_k^{-\tau}(u)\d\tau,\\
    		R_{K_k,k}^M&=\int_{0}^{1}\frac{(1-\tau)^{n_2}}{n_2!}ad_{S_k}^{n_2+1}(K_k^M)\circ\psi_k^{-\tau}(u)\d\tau,\\
    		R_{Z_k,k}^M&=\int_{0}^{1}\frac{(1-\tau)^{n_3}}{n_3!}ad_{S_k}^{n_3+1}(Z_k^M)\circ\psi_k^{-\tau}(u)\d\tau.\\
    	\end{align*}
    	The integers \(n_1, n_2, n_3\) are chosen sufficiently large such that  
    	the degree of the polynomial in the integrands of the remainder terms  
    	is the smallest integer greater than \(d\).

    	We group the terms as follows:
    	\[
    	Z_{k+1}^M = \sum_{l=2}^{n_1} \frac{ad_{S_k^M}^l}{l!} K_2^M 
    	+ \sum_{l=1}^{n_2} \frac{ad_{S_k^M}^l}{l!} (K_k^M - K_2^M) 
    	+ \sum_{l=1}^{n_3} \frac{ad_{S_k^M}^l}{l!} Z_k^M,
    	\]
    	\[
    	K_{k+1}^M = K_k^M + \Delta K_k^M, \qquad 
    	R_{k+1}^M = R_{K_2,k}^M + R_{K_k,k}^M + R_{Z_k,k}^M + \Upsilon_k \circ \psi_k^{-1}.
    	\]
    	
    Setting \(\delta_k = \frac{r_k}{r_{k+1}} = \frac{1 + \rho_k}{\rho_k} = 2d - k - 2,\)  
    we can now estimate the components of \(Z_{k+1}\) using Lemma \ref{fracLie},  
    the choice of constants, and our inductive assumptions:
    \begin{align*}
    	\Bigg|\sum_{l=2}^{n_1} \frac{ad_{S_k^M}^l}{l!} K_2^M \Bigg|_{[r_{k+1}, s, \gamma_{k+1}]}
    	&\leq \Bigg|\sum_{l=2}^{n_1} \frac{ad_{S_k^M}^{l-1}}{l!} \{K_2^M, S_k^M\} \Bigg|_{[r_{k+1}, s, \gamma_{k+1}]} \\
    	&\leq \sum_{l=1}^{n_1-1} \frac{1}{(l+1)!} \Big( 2 \delta_k |S_k^M|_{[r_k, s, \gamma_k]} \Big)^l 
    	\Big| \{K_2^M, S_k^M\} \Big|_{[r_k, s, \gamma_k]} \\
    	&\leq \Big(\exp\big(2 \delta_k |S_k^M|_{[r_k, s, \gamma_k]}\big)\Big) 
    	2 \delta_k |S_k^M|_{[r_k, s, \gamma_k]} |Z_k^M|_{[r_k, s, \gamma_k]} \\
    	&\leq 8d \frac{2C_K}{\gamma_k^2} C_2^{2k} r^{4k-2} \gamma^{8 - 4k} d^{4k-6} \\
    	&\leq \frac{1}{3} C_2^{k+1} \gamma^{2-2k} r^{2k+2} d^{2k+1};
    \end{align*}
    
    \begin{align*}
    	|\sum_{l=1}^{n_2}\frac{ad_{S_k^M}^l}{l!}(K_k^M-K_2^M)|_{[r_{k+1},s,\gamma_{k+1}]}&\leq\sum_{l=1}^{n_1}\frac{1}{l!}(2\delta_k|S_k^M|_{[r_{k},s,\gamma_{k}]})^{l}|(K_k^M-K_2^M)|_{[r_k,s,\gamma_k]}\\
    	&\leq\sum_{l=1}^{n_1}\frac{1}{l!}(2\delta_k|S_k^M|_{[r_{k},s,\gamma_{k}]})^{l-1}2\delta_k|S_k^M|_{[r_{k},s,\gamma_{k}]}|(K_k^M-K_2^M)|_{[r_k,s,\gamma_k]}\\
    	&\leq(\e^{2\delta_k|S_k^M|_{[r_{k},s,\gamma_{k}]}})2\delta_k|S_k^M|_{[r_{k},s,\gamma_{k}]}|(K_k^M-K_2^M)|_{[r_k,s,\gamma_k]}\\
    	&\leq8d\frac{2C_K}{\gamma_{k}^2}C_2^{k}r^{2k-2}\gamma^{8-4k}d^{4k-6}2C_Kdr^4\\
    	&\leq\frac{1}{3}C_2^{k+1}\gamma^{2-2k} r^{2k+2}d^{2k+1};
    \end{align*}
    
       \begin{align*}
       	\Bigg|\sum_{l=1}^{n_2} \frac{ad_{S_k^M}^l}{l!} Z_k^M \Bigg|_{[r_{k+1}, s, \gamma_{k+1}]}
       	&\leq \sum_{l=1}^{n_1} \frac{1}{l!} \Big( 2 \delta_k |S_k^M|_{[r_k, s, \gamma_k]} \Big)^l 
       	|Z_k^M|_{[r_k, s, \gamma_k]} \\
       	&\leq \sum_{l=1}^{n_1} \frac{1}{l!} \Big( 2 \delta_k |S_k^M|_{[r_k, s, \gamma_k]} \Big)^{l-1} 
       	2 \delta_k |S_k^M|_{[r_k, s, \gamma_k]} |Z_k^M|_{[r_k, s, \gamma_k]} \\
       	&\leq \Big( \exp\big( 2 \delta_k |S_k^M|_{[r_k, s, \gamma_k]} \big) \Big) 
       	2 \delta_k |S_k^M|_{[r_k, s, \gamma_k]} |Z_k^M|_{[r_k, s, \gamma_k]} \\
       	&\leq 8d \frac{2C_K}{\gamma_k^2} C_2^{2k} r^{4k-2} \gamma^{8-4k} d^{4k-6} \\
       	&\leq \frac{1}{3} C_2^{k+1} \gamma^{2-2k} r^{2k+2} d^{2k+1}.
       \end{align*}

    	 Summing the above three bounds gives the desired estimate for \(Z_{k+1}^M.\)

    	 Next, we estimate \(K_{k+1}^M - K_2^M:\)
    	 \begin{align*}
    	 	\Big| K_{k+1}^M - K_2^M \Big|_{[r_{k+1}, s, \gamma_{k+1}]}
    	 	&\leq \sum_{l=2}^{k} \Big|\Delta K_l \Big|_{[r_l, s, \gamma_l]} \\
    	 	&\leq \sum_{l=2}^{k} \Big| Z_k^M \Big|_{[r_l, s, \gamma_l]} \\
    	 	&\leq \sum_{l=2}^{k} C_2^l r^{2l} \gamma^{4 - 2l} d^{2l-3} \\
    	 	&\leq \frac{C_2^2 r^4 d}{1 - C_2 r^2 \gamma^{-2} d^2} 
    	 	\leq 2C_2^2 d r^4.
    	 \end{align*}

    	To estimate the remainder \(R_{K_2,k}^M\), we choose \(n_1 = \Big[\frac{d-k}{k-1}\Big] + 1\).  
    	This is the smallest integer \(n_1\) such that \(n_1(2k-2) + 2k \geq 2d.\)  
    	Under this choice, we derive the following bound:
    	\begin{align*}
    		\Big|R_{K_2,k}^M\Big|_{[r_{k+1},s,\gamma_{k+1}]} 
    		&\leq \Big| ad_{S_k^M}^{n_1} \{ S_k^M, K_2^M \} \Big|_{[r_{k+1},s,\gamma_{k+1}]} \\
    		&\leq \Big|S_k^M\Big|_{[r_k, s, \gamma_{k}]}^{n_1} \Big|Z_k^M\Big|_{[r_k, s, \gamma_{k}]} \\
    		&\leq \Bigg( \frac{2C_K}{\gamma_{k}^2} C_2^k r^{2k-2} \gamma^{4-2k} d^{2k-1} \Bigg)^{n_1} 
    		C_2^k r^{2k} \gamma^{4-2k} d^{2k-3} \\
    		&\leq (2C_K)^{\frac{d}{2}} C_2^{2d} r^{2d} \gamma^{-2d} d^{5d} \\
    		&\leq \frac{1}{3} C_2^{2d} r^{2d-2} \gamma^{2d-2} d^{5d}.
    	\end{align*}
    	
    	For \(R_{K_k,k}^M\), we take \(n_2 = \Big[\frac{d-1}{k-1}\Big],\)  
    	which is the smallest integer \(n_2\) satisfying \((n_2+1)(2k-2)+2 \geq 2d.\)  
    	Under this setting, we have:
    	\begin{align*}
    		\Big|R_{K_k,k}^M\Big|_{[r_{k+1},s,\gamma_{k+1}]} 
    		&\leq \Big| ad_{S_k^M}^{n_2+1} K_k^M \Big|_{[r_{k+1},s,\gamma_{k+1}]} \\
    		&\leq \Big|S_k^M\Big|_{[r_k, s, \gamma_{k}]}^{n_2+1} \Big|K_k^M\Big|_{[r_k, s, \gamma_{k}]} \\
    		&\leq \Bigg( \frac{2C_K}{\gamma_{k}^2} C_2^k r^{2k-2} \gamma^{4-2k} d^{2k-1} \Bigg)^{n_2+1} 
    		\cdot 2 C_K d r^4 \\
    		&\leq (2C_K)^d C_2^{3d} r^{2d-2} \gamma^{2-2d} d^{3d} \\
    		&\leq \frac{1}{3} C_2^{2d} r^{2d-2} \gamma^{2-2d} d^{5d}.
    	\end{align*}

     	For \(R_{Z_k,k}^M\), we take \(n_3 = \Big[\frac{d-k}{k-1}\Big],\)  
     	which is the smallest integer \(n_3\) such that \((n_3+1)(2k-2)+2k \geq 2d.\)  
     	This leads to the following estimate:
     	\begin{align*}
     		\Big|R_{Z_k,k}^M\Big|_{[r_{k+1},s,\gamma_{k+1}]} 
     		&\leq \Big| ad_{S_k^M}^{n_3+1} Z_k^M \Big|_{[r_{k+1},s,\gamma_{k+1}]} \\
     		&\leq \Big|S_k^M\Big|_{[r_k, s, \gamma_{k}]}^{n_3+1} \Big|Z_k^M\Big|_{[r_k, s, \gamma_{k}]} \\
     		&\leq \Bigg( \frac{2C_K}{\gamma_{k}^2} C_2^k r^{2k-2} \gamma^{4-2k} d^{2k-1} \Bigg)^{n_3+1} 
     		C^k r^{2k} \gamma^{4-2k} d^{2k-3} \\
     		&\leq (2C_K)^{\frac{d}{2}} C_2^{2d} r^{2d} \gamma^{-2d} d^{5d} \\
     		&\leq \frac{1}{3} C_2^{2d} r^{2d-2} \gamma^{2d-2} d^{5d}.
     	\end{align*}

    	Finally, we combine the estimates for the remainder terms to bound \(\Upsilon_{k+1}\) as follows:
    	\begin{align*}
    		\Big|\Upsilon_{k+1}\Big|_{[r_{k+1},s,\gamma_{k+1}]}
    		&\leq \Big|R_{K_2,k}^M\Big|_{[r_{k+1},s,\gamma_{k+1}]} 
    		+ \Big|R_{K_k,k}^M\Big|_{[r_{k+1},s,\gamma_{k+1}]} 
    		+ \Big|R_{Z_k,k}^M\Big|_{[r_{k+1},s,\gamma_{k+1}]} 
    		+ \Big|\Upsilon_k \circ \psi_k^{-1}\Big|_{[r_{k+1},s,\gamma_{k+1}]} \\
    		&\leq C_2^{2d} r^{2d-2} \gamma^{2d-2} d^{5d} 
    		+ \sum_{l=0}^{\infty} \frac{1}{l!} 
    		\big( 2 \delta_k \big|S_k\big|_{[r_k,s,\gamma_{k}]} \big)^l 
    		\big|\Upsilon_k\big|_{[r_k,s,\gamma_{k}]} \\
    		&\leq C_2^{2d} r^{2d-2} \gamma^{2d-2} d^{5d} + 2 \big|\Upsilon_k\big|_{[r_k,s,\gamma_{k}]}.
    	\end{align*}
    	
Rearranging this inequality, we obtain:
\begin{align*}
	\frac{1}{2^{k+1}} \Big|\Upsilon_{k+1}\Big|_{[r_{k+1},s,\gamma_{k+1}]}
	&\leq \frac{1}{2^{k+1}} C_2^{2d} r^{2d-2} \gamma^{2d-2} d^{5d} 
	+ \frac{1}{2^k} \Big|\Upsilon_k\Big|_{[r_k, s, \gamma_k]},\\
     |\Upsilon_d|_{[r_d,s,\gamma_{d}]}\frac{1}{2^d}
     &\leq\sum_{l=3}^{d}\frac{C_2^{2d}r^{2d-2}\gamma^{2d-2}d^{5d}}{2^l},\\
     |\Upsilon_d|_{[r_d,s,\gamma_{d}]}&\leq C_2^{2d}r^{2d-2}\gamma^{2d-2}d^{5d}.
     \end{align*}
    \end{proof}

    \section{Stability Estimate}\label{sec:sta}
    Let \(u(t): [0,T^*) \to W^s\) be the maximal solution to the original system with initial data 
    \(u(0) \in B_s\Big(\frac{r}{2}\Big) \cap \Big(\Pi^{M}\Big)^{-1} \mathfrak{D}_{3\gamma}^{2d,M}\).  
    We employ a standard bootstrap argument and define  
    \[
    T := \sup \Big\{ t \in [0,T^*] \mid \forall \tau \in [0,t], \, 
    \sum_{J \in \mathcal{Z}} \e^{sf(|J|)} \Big| \big|u_J(\tau)\big|^2 - \big|u_J(0)\big|^2 \Big|^{\frac{1}{2}} 
    \leq \Vert u(0)\Vert_s^{\frac{3}{2}} \Big\}.
    \]
    
    Our goal is to show that \(T > T_r\), where \(T_r\) will be introduced in Section \ref{sec:len}.  
    The value \(T_r\) depends on the space \(W_s\), namely the regularity of the solution and the nonlinear term \(K\).  
    We will argue by contradiction: suppose \(T \leq T_r\). This assumption implies that the bootstrap condition must be violated at time \(T_r\). Therefore, the following inequality holds at time \(T\):
    \[
    \sum_{J \in \mathcal{Z}} \e^{sf(|J|)} \Big| \big|u_J(T)\big|^2 - \big|u_J(0)\big|^2 \Big|^{\frac{1}{2}} 
    \geq \Vert u(0)\Vert_s^{\frac{3}{2}}.
    \]  
    The remainder of this section will prove the opposite of this inequality to arrive at a contradiction.

    For any \(t \in [0,T]\), the definition of \(T\) leads to the following immediate estimate:  
    
    \begin{align*}
    	\Vert u(t)\Vert_s&\leq\Vert u(0)\Vert_s+\sum_{J\in\mathcal{Z}}\e^{sf(|J|)}||u(t)|-|u(0)||\leq\Vert u(0)\Vert_s+\Vert u(0)\Vert_s^{\frac{3}{2}},\\
    	\sum_{J\in\mathcal{Z}}\e^{2sf(|J|)}||u_J(t)|^2-|u_J(0)|^2|&\leq(\sum_{J\in\mathcal{Z}}\e^{sf(|J|)}||u_J(t)|-|u_J(0)||^{\frac{1}{2}})^2\leq\Vert u(0)\Vert_s^{3}.
    \end{align*}
    Using these bounds, we now demonstrate that the solution remains within a non-resonant region.  
    Define \(\omega_{\mathcal{J}}(u) := \omega_{\mathcal{J}}(\Pi^{M} u)\), where \(\mathcal{J} \in \mathfrak{J}^{d,M}\). We establish the following inequality:

    \begin{align*}
    	|\omega_{\mathcal{J}}(u(t))| &\geq |\omega_{\mathcal{J}}(u(0))| - \Big| \omega_{\mathcal{J}}(u(t)) - \omega_{\mathcal{J}}(u(0)) \Big|
    	\\
    	&\geq3\gamma \Vert u(0) \Vert_s^2 - \sum_{J \in \mathcal{J}} \Big| |u_J(t)|^2 - |u_J(0)|^2 \Big|.
    	\\
    	&\geq \big( 3\gamma - \Vert u(0) \Vert_s \big) \Vert u(0) \Vert_s^2\\
    	&\geq2\gamma\Vert u(t)\Vert_s^2.
    \end{align*}
    We now apply the transformations derived in the previous sections.  
    First, applying the map from Proposition \ref{resonantlize}, we define the transformed variable \(v(t) := \Phi_d(u(t))\).  
    This new variable \(v\) evolves according to the Hamiltonian system generated by  
    \[
    H \circ \Phi_d^{-1} = H_0 + Z_d + R_d.
    \]  
    Thus, \(v(t)\) is the solution to the following Cauchy problem:
    
    \[
    \i \partial_t v = \nabla H_0(v) + \nabla Z_d(v) + \nabla R_d(v), \quad v(0) = \Phi_d(u(0)).
    \]
     Next, we decompose \(v\) into its low- and high-mode components as  
     \[
     v = v^{<M} + v^{>M},
     \]  
     which induces the following decomposition of the Hamiltonian:
      \[
      H \circ \Phi_d^{-1}(v) = H^M(v^{<M}) + H^{>M}(v^{<M}, v^{>M}) + R_d(v).
      \]
      By applying Proposition \ref{integr}, we transform the low-mode Hamiltonian \(H^M\) using the map \(\Psi_d\).  
      This results in the rational normal form on \(\mathfrak{B}^{2d,M}_{\gamma,s}(2r)\), expressed as:
      \[
      H^M \circ \Psi_d^{-1} = H_0^M + K_d^M + \Upsilon_d^M.
      \]
      
      We define the variable \(w(t) := \Psi_d(v^M)\) and fix the high-mode component \(w^{>M}\).  
      Under this setting, the evolution of \(w(t)\) satisfies the equation:
      \[
      \i \partial_t w(t) = \nabla (H^M \circ \Psi_d^{-1})(w) + D\Psi_d(v^{<M}) \cdot (\Pi^{<M} X_{H^{>M}}(v)) + D\Psi_d(v^{<M}) \cdot (\Pi^{<M} X_{R_d}(v)),
      \]
      which we rewrite as:
      \[
      \i \partial_t w(t) := \nabla (H^M \circ \Psi_d^{-1})(w) + \mathcal{W}(t).
      \]
      
      Furthermore, we verify that the initial data \(w(0)\) lies in the appropriate non-resonant domain,  
      provided that \(u \in B_s\big(\frac{r}{2}\big) \cap (\Pi^M)^{-1} \mathfrak{D}_{3\gamma}^{2d,M}.\)
      In fact, the following inequality holds for the initial data \(w(0)\):
      \begin{align*}
   	|\omega^M_{\mathcal{J}}(w(0))|&\geq|\omega^M_{\mathcal{J}}(u(0))|-|\omega^M_{\mathcal{J}}(u(0))-\omega^M_{\mathcal{J}}(v(0))|-|\omega^M_{\mathcal{J}}(v(0))-\omega^M_{\mathcal{J}}(w(0))|\\
   	&\geq3\gamma\Vert u(0)\Vert_s^2-2dC_K(4C_KC_2^2\Vert u(0)\Vert_s^3\gamma^{-2}d^3+16C_K\Vert v(0)\Vert_s^3)\\
   	&\geq2\gamma\Vert w(0)\Vert_s^2.
   \end{align*} 
   Thus, we conclude that the initial data \(w(0)\) belongs to the non-resonant domain  
   \(\mathfrak{B}_{2\gamma,s}^{2d,M}(r).\)
  
    To complete the bootstrap argument, it is necessary to show that neither the high-mode nor the low-mode components grow sufficiently to violate the bootstrap condition by time \(T_r\).  
    In particular, we aim to prove the following bounds:
    \[
    \sum_{|J| > M} \e^{sf(|J|)} \Big||v_J(t)|^2 - |v_J(0)|^2 \Big|^{\frac{1}{2}} \leq \frac{1}{4} \Vert u(0) \Vert_s^{\frac{3}{2}},
    \]
    \[
    \sum_{|J| \leq M} \e^{sf(|J|)} \Big||w_J(t)|^2 - |w_J(0)|^2 \Big|^{\frac{1}{2}} \leq \frac{1}{4} \Vert u(0) \Vert_s^{\frac{3}{2}}.
    \]
    
    We begin with the high modes (\(|J| > M\)).  
    The time evolution of \(|v_J|^2\) is described by the following equation:
    \[
    \frac{\d |v_J|^2}{\d t} = \{|v_{J}|^2, Z_d\} + \{|v_{J}|^2, R_d\}.
    \]
    
   By applying Lemma \ref{cut}, the contribution from \(Z_d\) can be bounded as follows:
   \begin{align*}
   	\sum_{|J| > M} \e^{sf(|J|)} \Big| \{ |v_J|^2, Z_d \} \Big|^{\frac{1}{2}}
   	&\leq 2 \sum_{|J| > M} \e^{sf(|J|)} |v_J|^{\frac{1}{2}} \cdot |(X_{Z_d}(v))_J|^{\frac{1}{2}}\\
   	&\leq 2 \Vert v \Vert_s^{\frac{1}{2}} \Vert X_{Z_d} \Vert_s^{\frac{1}{2}}.
   \end{align*}
   Thus, we obtain the following estimate for the high-mode contribution of \(Z_d\):
   \[
   \sum_{|J| > M} \e^{sf(|J|)} \Big| \{ |v_J|^2, Z_d \} \Big|^{\frac{1}{2}} 
   \leq C_Z \e^{\big( \frac{s_0 - s}{2} \big) f\big( \sqrt{\frac{N}{d-2}} \big)}.
   \]
   
     We define the following parameter:
     \[
     \mathfrak{r} := \frac{C_m r^{\frac{2}{5}}}{\gamma}, \quad C_m := 4 \max \{ C_KC_2^2, 1, C_K \}.
     \]
     Using the estimate for \(R_d\) from Proposition \ref{resonantlize}, we evaluate the contribution of \(R_d\) to the high modes:
     \begin{align*}
     	\sum_{|J| > M} \e^{sf(|J|)} \Big| \{|v_J|^2, R_d\} \Big|^{\frac{1}{2}} 
     	&\leq 2 \sum_{|J| > M} \e^{sf(|J|)} |v_J|^{\frac{1}{2}} |(X_{R_d}(v))_J|^{\frac{1}{2}}\\
     	&\leq 2 \Vert v \Vert_s^{\frac{1}{2}} \Vert X_{R_d} \Vert_s^{\frac{1}{2}}\leq 6C_K^d \Vert u(0) \Vert_s^{d-1} d^{\frac{5d}{2}}\\
     	&\leq (C_K^d r^{\frac{2d}{5}} d^d)^{\frac{5}{2}}
     	\leq \mathfrak{r}^d d^d \Vert u(0) \Vert_s^{\frac{3}{2}}.
     \end{align*}
     
      By integrating from \(0\) to \(t \leq T_r\), and using the choice of \(T_r\) from Section \ref{sec:len},  
      we derive the following bound:
      \begin{align*}
      	\sum_{|J| > M} \e^{sf(|J|)} \Big| |v_J(t)|^2 - |v_J(0)|^2 \Big|^{\frac{1}{2}} 
      	&\leq T \sup_{t \in [0,T]} \sum_{|J| > M} \e^{sf(|J|)} \Big| \frac{\d |v_J(t)|^2}{\d t} \Big|^{\frac{1}{2}}\\
      	&\leq T_r \sum_{|J| > M} \e^{sf(|J|)} \Big( \Big| \{ |v_J|^2, Z_d \} \Big|^{\frac{1}{2}} + \Big| \{ |v_J|^2, R_d \} \Big|^{\frac{1}{2}} \Big)\\
      	&\leq T_r \Big( C_Z \e^{\big( \frac{s_0 - s}{2} \big) f\big( \sqrt{\frac{N}{d-2}} \big)} + \mathfrak{r}^d d^d \Vert u(0) \Vert_s^{\frac{3}{2}} \Big)\\
      	&\leq \Vert u(0) \Vert_s^{\frac{3}{2}}.
      \end{align*}

        We now turn our attention to the low modes (\(|J| \leq M\)).  
        The time evolution of \(|w_J|^2\) is governed by the following equation:
        \[
        \frac{\d |w_J(t)|^2}{\d t} = \{ |w_J(t)|^2, H^M \circ \Psi_d^{-1} \} + \Im (w_{\overline{J}} \mathcal{W}_J(t)),
        \]
        where \(\Im(u)\) denotes the imaginary part of \(u\).
        
         From Proposition \ref{integr}, we have \(\{ |w_J|^2, H_0^M + K_d^M \} = 0\).  
         This allows us to simplify the first term as follows:
         \[
         |\{ |w_J(t)|^2, H^M \circ \Psi_d^{-1} \}| = |\{ |w_J(t)|^2, \Upsilon_d \}|
         \leq 2 |w_J(t)(X_{\Upsilon_d})_J|.
         \]
         For the low modes, we can estimate the first term as:
         \begin{align*}
         	 \sum_{|J| \leq M} \e^{sf(|J|)} |\{ |w_J(t)|^2, H^M \circ \Psi_d^{-1} \}|^{\frac{1}{2}}
         	&\leq 2 \sum_{|J| \leq M} \e^{sf(|J|)} |w_J(t)(X_{\Upsilon_d})_J|^{\frac{1}{2}}\\
         	&\leq 2 \Vert w \Vert_s^{\frac{1}{2}} \Vert X_{\Upsilon_d} \Vert_s^{\frac{1}{2}}
         	\leq (2C_K)^d \Vert w \Vert_s^{d-1} \gamma^{1-d} d^{\frac{5}{2}d},\\
         	&\leq (C_K r^{\frac{2}{5}} \gamma^{-1} d^d)^{\frac{5}{2}} 
         	\leq \mathfrak{r}^d d^d.
         \end{align*}
         
         By taking \(Q = \Vert w(t) \Vert_{L^2}^2\) in Lemma \ref{Cauchy}, we obtain:
         \[
         \Vert \mathcal{W}(t) \Vert_s \leq 2 \Big( \Vert \Pi^{<M} X_{H^{>M}}(v) \Vert_s + \Vert \Pi^{<M} X_{R_d}(v) \Vert_s \Big).
         \]
         This result allows us to estimate the contribution of the perturbation to the low modes:
         \begin{align*}
         	\sum_{|J| \leq M} \e^{sf(|J|)} |\Im(w_{\overline{J}} \mathcal{W}_J(t))|^{\frac{1}{2}} 
         	&\leq 2 \sum_{|J| \leq M} \e^{sf(|J|)} \Big( |w_{\overline{J}} (\Pi^{<M} X_{H^{>M}}(v))_J|^{\frac{1}{2}}
         	+ |w_{\overline{J}} (\Pi^{<M} X_{R_d})_J|^{\frac{1}{2}} \Big)\\
         	&\leq 2 \Vert w \Vert_s^{\frac{1}{2}} \Big( \Vert \Pi^{<M} X_{H^{>M}}(v) \Vert_s^{\frac{1}{2}} + \Vert \Pi^{<M} X_{R_d} \Vert_s^{\frac{1}{2}} \Big)\\
         	&\leq C_Z \e^{\big(\frac{s_0 - s}{2}\big) f\big(\sqrt{\frac{M}{d-2}}\big)} + \mathfrak{r}^d d^d.
         \end{align*}
         By the choice of \(T_r\) in Section \ref{sec:len}, we estimate the total variation of the low modes as:
         \begin{align*}
         	\sum_{|J| \leq M} \e^{sf(|J|)} \Big| |w_J(t)|^2 - |w_J(0)|^2 \Big|^{\frac{1}{2}} 
         	&\leq T \sup_{t \in [0, T]} \sum_{|J| \leq M} \e^{sf(|J|)} \Big| \frac{\d |w_J(t)|^2}{\d t} \Big|^{\frac{1}{2}}\\
         	&\leq T_r \Big( \mathfrak{r}^d d^d + C_Z \e^{\big( \frac{s_0 - s}{2} \big) f\big( \sqrt{\frac{M}{d-2}} \big)} + \mathfrak{r}^d d^d \Big)\\
         	& \leq \Vert u(0) \Vert_s^{\frac{3}{2}}.
         \end{align*}
     
         Combining all the preceding estimates, we obtain the following bound for the total variation of the norm of the original solution \(u(t)\):
         \[
         \sum_{J \in \mathcal{Z}} \e^{sf(|J|)} \Big| |u_J(t)|^2 - |u_J(0)|^2 \Big|^{\frac{1}{2}}
         \leq \sum_{|J| \leq M} \e^{sf(|J|)} \Big| |u_J(t)|^2 - |u_J(0)|^2 \Big|^{\frac{1}{2}}
         + \sum_{|J| > M} \e^{sf(|J|)} \Big| |u_J(t)|^2 - |u_J(0)|^2 \Big|^{\frac{1}{2}}
         \]
         \[
         \leq \sum_{|J| \leq M} \e^{sf(|J|)} \Big( \Big| |u_J(t)|^2 - |v_J(t)|^2 \Big|^{\frac{1}{2}} 
         + \Big| |v_J(t)|^2 - |w_J(t)|^2 \Big|^{\frac{1}{2}} 
         + \Big| |w_J(t)|^2 - |w_J(0)|^2 \Big|^{\frac{1}{2}} \Big)
         \]
         \[
         + \sum_{|J| \leq M} \e^{sf(|J|)} \Big( \Big| |w_J(0)|^2 - |v_J(0)|^2 \Big|^{\frac{1}{2}} 
         + \Big| |v_J(0)|^2 - |u_J(0)|^2 \Big|^{\frac{1}{2}} \Big)
         \]
         \[
         + \sum_{|J| > M} \e^{sf(|J|)} \Big( \Big| |u_J(t)|^2 - |v_J(t)|^2 \Big|^{\frac{1}{2}} 
         + \Big| |v_J(t)|^2 - |v_J(0)|^2 \Big|^{\frac{1}{2}} 
         + \Big| |v_J(0)|^2 - |u_J(0)|^2 \Big|^{\frac{1}{2}} \Big).
         \]
         Finally, note the following useful property:
         \[
         \sum_{J \in \mathcal{Z}} \e^{sf(|J|)} \Big( |u_J|^2 - |u'_J|^2 \Big)^{\frac{1}{2}} 
         \leq \Big( \Vert u \Vert_s + \Vert u' \Vert_s \Big)^{\frac{1}{2}} \Vert u - u' \Vert_s^{\frac{1}{2}}.
         \]

    Applying this inequality to each term, along with the near-identity estimates from Propositions \ref{resonantlize} and \ref{integr}, we obtain:
    
    \begin{align*}
    	&\sum_{J\in\mathcal{Z}}\e^{sf(|J|)}| |u_J(t)|^2-|u_J(0)|^2|^{\frac{1}{2}}\\
    	\leq& (\Vert u(t)\Vert_s+\Vert v(t)\Vert_s)^{\frac{1}{2}}\Vert u(t)-v(t)\Vert_s^{\frac{1}{2}}+(\Vert w(t)\Vert_s+\Vert v(t)\Vert_s)^{\frac{1}{2}}\Vert w(t)-v(t)\Vert_s^{\frac{1}{2}}\\
    	&+(\Vert u(0)\Vert_s+\Vert v(0)\Vert_s)^{\frac{1}{2}}\Vert u(0)-v(0)\Vert_s^{\frac{1}{2}}+(\Vert w(0)\Vert_s+\Vert v(0)\Vert_s)^{\frac{1}{2}}\Vert w(0)-v(0)\Vert_s^{\frac{1}{2}}\\
    	&+\Vert u(0)\Vert_s^{\frac{3}{2}}+\Vert u(0)\Vert_s^{\frac{3}{2}}\\
    	\leq&(3\Vert u(t)\Vert_s)^{\frac{1}{2}}4\sqrt{C_K}\Vert u(t)\Vert^{\frac{3}{2}}+(3\Vert u(0)\Vert_s)^{\frac{1}{2}}4\sqrt{C_K}\Vert u(0)\Vert^{\frac{3}{2}}\\
    	&+(3\Vert v(t)\Vert_s)^{\frac{1}{2}}(4C_KC_2\Vert v(t)\Vert_s^3\gamma^{-2}d^3)^{\frac{1}{2}}+(3\Vert v(0)\Vert_s)^{\frac{1}{2}}(4C_KC_2\Vert v(0)\Vert_s^3\gamma^{-2}d^3)^{\frac{1}{2}}\\
    	&+\frac{1}{4}\Vert u(0)\Vert_s^{\frac{3}{2}}+\frac{1}{4}\Vert u(0)\Vert_s^{\frac{3}{2}}\\
    	<&\Vert u(0)\Vert_s^{\frac{3}{2}}.
    \end{align*}
     
     This result demonstrates that the bootstrap condition is strictly satisfied at time \(t = T\), which contradicts the definition of \(T\) as the supremum.  
     Therefore, our initial assumption \(T \leq T_r\) must be false, which implies \(T > T_r\).  
     This establishes the stability of the solution on the time interval \([0, T_r]\) and thereby completes the proof.

     \section{Measure Estimate}\label{sec:mea}
     In this section, we perform measure estimates to derive constraints on the parameters \(r\), \(\gamma\), and \(d\).  
     By combining the requirements for measure estimation with regularity balancing, we obtain the final estimate for the stability time \(T_r\).  
     
     For a fixed \(\mathcal{J} \in \mathfrak{J}^{2d,M}\), we define the resonant region:
     \[
     \mathfrak{R}_{\mathcal{J}} := \Big\{u \in B_{s}^M(1) \,\Big|\, |\omega_{\mathcal{J}}(u)| \leq 3\gamma \Big\}.
     \]
     The union of these regions
     \[
     \mathfrak{R}_{\gamma} = \bigcup_{\mathcal{J} \in \mathfrak{J}^{2d,M}} \mathfrak{R}_{\mathcal{J}}
     \]
     represents the resonant portion of the phase space, which must be excluded from our analysis.  
     The objective of this section is to demonstrate that the Lebesgue measure of \(\mathfrak{R}_{\gamma}\) is small for two physically meaningful choices of the kernel \(K\).

    \subsection{\(K_k = \frac{1}{|k|^p}\)}  
    \begin{lemma}  
    	Let
    	\[
    	K_k = \frac{1}{|k|^p}, \quad p \in \mathbb{Z}^+, \quad \text{for } |J| \neq 0, \quad \text{and set } K_0 = 0.
    	\]
    	For any \(\mathcal{J} \in \mathfrak{J}^{2d,M}\), there exists an index \(J^*\) with \(|J^*| \leq (p+1)d\) such that
    	\[
    	|\partial_{|u_{J^*}|^2} \omega_{\mathcal{J}}(u)| \geq \frac{1}{(4pd)^{2dp} \prod_{l=1}^{2d} |j_l|}.
    	\]
    \end{lemma}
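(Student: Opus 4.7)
The plan is to view $\partial_{|u_{j^*}|^2}\omega_{\mathcal{J}}(u)$ as a rational function of the integer variable $j^*$ and to locate a favorable integer via a pigeonhole argument on an integer-valued numerator. Direct differentiation of $\omega_{\mathcal{J}}(u)=2\i\sum_{\alpha=1}^{2d}\delta_\alpha\sum_{k}K_{|k-j_\alpha|}|u_k|^2$, together with $K_k=|k|^{-p}$ for $k\neq 0$ and $K_0=0$, gives
\[
\partial_{|u_{j^*}|^2}\omega_{\mathcal{J}}(u)=2\i\sum_{\alpha=1}^{2d}\delta_\alpha K_{|j^*-j_\alpha|}=2\i\sum_{\alpha\,:\,j_\alpha\neq j^*}\frac{\delta_\alpha}{|j^*-j_\alpha|^p}=:2\i\,G(j^*),
\]
where the terms with $j^*=j_\alpha$ drop out because $K_0=0$.

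On each maximal open interval of $\mathbb{R}\setminus\{j_1,\dots,j_{2d}\}$ on which the sign pattern of $(j^*-j_\alpha)_\alpha$ is fixed, clearing the common denominator $\prod_\alpha(j^*-j_\alpha)^p$ presents $G(j^*)$ as $N(j^*)/\prod_\alpha(j^*-j_\alpha)^p$, where $N$ is a polynomial in $j^*$ of degree at most $(2d-1)p$ with integer coefficients (for odd $p$ the piecewise factors $\operatorname{sgn}(j^*-j_\alpha)^p$ are absorbed into the $\pm 1$ coefficients $\delta_\alpha$). I will first group coincident values of $j_\alpha$ and discard the clusters whose $\delta$-sum vanishes — such clusters contribute zero to $G$ and may be ignored. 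After this reduction, the $\mathbb{C}$-linear independence of $\{(x-j)^{-p}\}$ over distinct $j$ forces $N\not\equiv 0$.

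A pigeonhole estimate then finishes the job. The interval $[-(p+1)d,(p+1)d]\cap\mathbb{Z}$ contains $2(p+1)d+1$ integers; excluding the at most $2d$ values that coincide with some $j_\alpha$ and the at most $(2d-1)p$ integer roots of $N$ leaves at least $2d+p+1$ admissible integers (the radius $(p+1)d$ is chosen precisely so that $2(p+1)d+1>2d+(2d-1)p$). At any such $j^*$, $N(j^*)\in\mathbb{Z}\setminus\{0\}$, so $|N(j^*)|\geq 1$. Using $|j^*-j_\alpha|\leq(p+1)d+|j_\alpha|\leq 4pd\,|j_\alpha|$ whenever $|j_\alpha|\geq 1$ (the case $j_\alpha=0$ merely drops the $\alpha$-term), the denominator obeys $\prod_\alpha|j^*-j_\alpha|^p\leq(4pd)^{2dp}\prod_\alpha|j_\alpha|^p$, which combined with $|N(j^*)|\geq 1$ yields the stated lower bound on $|G(j^*)|$.

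The main obstacle I anticipate is the non-vanishing of $N$: one must respect both the sign-region structure (for odd $p$, $N$ is genuinely a distinct polynomial on each region and the pigeonhole has to be run in at least one region containing enough integers) and the combinatorics of coincident $j_\alpha$'s handled by the grouping step. A secondary subtlety lies in matching the exact form of the denominator factor: the approach above naturally produces $\prod_\alpha|j_\alpha|^p$, and reconciling it with the $\prod_\alpha|j_\alpha|$ appearing in the statement may demand a slightly sharper choice of $j^*$ — for example, picking $j^*$ so that several factors $|j^*-j_\alpha|$ are genuinely of size $|j_\alpha|$ only to the first power — which again exploits the flexibility of the pigeonhole step.
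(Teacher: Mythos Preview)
Your approach is essentially identical to the paper's: clear denominators to obtain an integer polynomial of degree at most $(2d-1)p$, pigeonhole in $[-(p+1)d,(p+1)d]\cap\mathbb{Z}$ to find $j^*$ avoiding both the $j_\alpha$'s and the integer roots, and then bound each factor $|j^*-j_\alpha|\leq 4pd\,|j_\alpha|$. The paper's proof is terser---it writes $\sum_l \delta_l/(j^*-j_l)^p$ without absolute values and simply asserts the numerator polynomial is not identically zero---so your more careful discussion of sign regions and coincident $j_\alpha$'s only fills in details the paper omits rather than diverging from it.

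On your two flagged subtleties: the non-vanishing of $N$ is implicitly assumed because the lemma is only applied to multi-indices $\mathcal{J}$ whose small divisor $\omega_{\mathcal{J}}$ is genuinely nontrivial (those are the only ones entering the homological equation), so after your grouping step at least one cluster survives; your linear-independence argument then goes through. As for the denominator exponent, you are right that the argument naturally yields $\prod_l|j_l|^p$ rather than $\prod_l|j_l|$. The paper's own proof produces $\prod_l|j_l|^p$ as well, and indeed the subsequent measure estimate uses the bound in the form $(4pdM)^{2dp}$, consistent with the $p$-th power; the exponent $1$ in the lemma's displayed statement appears to be a typo, so there is no need to sharpen the choice of $j^*$.
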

    
    \begin{proof}
    	We compute
    	\[
    	|\partial_{|u_{J^*}|^2}\omega_{\mathcal{J}}| = \Big|\sum_{l=1}^{2d}\frac{\delta_l}{(j^* - j_l)^p}\Big| = \Big|P(j^*)\prod_{\alpha=1}^d \frac{1}{(j^* - j_l)^p}\Big|,
    	\]
    	where \(P(x)\) is a polynomial of degree strictly less than \(p(2d-1)\).  
    	
    	By the properties of polynomials, there exists an integer \(j^* \in \big(-(p+1)d, (p+1)d\big) \setminus \{j_1, j_2, \dots, j_{2d}\}\) such that \(P(j^*) \neq 0\), and specifically \(|P(j^*)| \geq 1\).  
    	
    	Now, consider the distances \(|j_l - j^*|\) for each \(j_l\):
    	If \(|j_l| \leq (p+1)d\), then \(|j_l - j^*| \leq 2(p+1)d\);
    	
    	 If \(|j_l| > (p+1)d\), then \(|j_l - j^*| \leq 2|j_l|\). In either case, we find that 
    	\[
    	|j_l - j^*| \leq 2|j_l|(p+1)d \leq 4pd|j_l|.
    	\]
    	Thus, we conclude
    	\[
    	|\partial_{|u_{J^*}|^2}\omega_{\mathcal{J}}| \geq \frac{1}{(4pd)^{2dp} \prod_{l=1}^{2d} |j_l|}.
    	\]
    \end{proof}
    
    Let \(J^*\) be the index guaranteed by the previous lemma. To apply Fubini's theorem, we consider the component \(|u_{J^*}|^2\) independently. Define \(u' \in B_s^M(1)\) such that \((u_J)_{J \neq J^*} = (u'_J)_{J \neq J^*}\), varying only the component \(u_{J^*}\).
    
    Since \(\omega_{\mathcal{J}}(u)\) depends on each action variable \(|u_J|^2\), we estimate:
    \[
    |\omega_{\mathcal{J}}(u) - \omega_{\mathcal{J}}(u')| \geq \frac{\big||u_{J^*}|^2 - |u'_{J^*}|^2\big|}{(4pd)^{2dp} \prod_{l=1}^{2d} |j_l|^p}.
    \]
    With the definition of \(\mathfrak{J}^{2d,M}\), this simplifies to:
    \[
    |\omega_{\mathcal{J}}(u) - \omega_{\mathcal{J}}(u')| \geq \frac{\big||u_{J^*}|^2 - |u'_{J^*}|^2\big|}{(4pdM)^{2dp}}.
    \]
    Thus, for \(u, u' \in \mathfrak{R}_{\mathcal{J}}\),
    \[
    \big||u_{J^*}|^2 - |u'_{J^*}|^2\big| \leq 3\gamma (4pdM)^{2dp}.
    \]
    
    Let \(B_{s,J^*}(1) := \{u \in B_s^M(1) \mid u_{J^*} = 0\}\) denote the projection of the ball \(B_s^M(1)\) onto the subspace with \(u_{J^*} = 0\). Using this projection, we estimate:
    \[
    \text{meas}(\mathfrak{R}_{\mathcal{J}}) \leq 6\pi \gamma (4pdM)^{2dp} \text{meas}(B_{s,J^*}(1)).
    \]
    
   We can calculate the measure of \(B_s^M(1)\) as follows:
   \begin{align*}
   	\text{meas}(B_s^M(1)) 
   	&= \int_{\substack{u' \in B_{s,J^*}(1), \, u_{J^*} \in \mathbb{C} \\ \Vert u'\Vert_s + \e^{sf(|J|)}|u_{J^*}| \leq 1}} 1 \, \d u' \, \d u_{J^*} \\
   	&= \e^{-sf(|J^*|)} \int_{0}^{1} \int_{u' \in (1-y)B_{s,J^*}(1)} 1 \, \d u' \, \d y \\
   	&= \text{meas}(B_{s,J^*}^M(1)) \e^{-sf(|J^*|)} \int_{0}^{1} \pi y (1-y)^{4d} \, \d y \\
   	&= \text{meas}(B_{s,J^*}^M(1)) \frac{\pi \e^{-sf(|J|)}}{4M(4M+1)}.
   \end{align*}
   
      Therefore, for \(|J^*| \leq (p+1)d\), we have the following upper bound:
      \[
      \text{meas}(\mathfrak{R}_{\mathcal{J}}) \leq \text{meas}(B_s^M(1)) \cdot 8M(4M+1) \cdot 3\gamma (4pdM)^{2dp} \e^{sf((p+1)d)}.
      \]
      
      Using the fact that \(\#\mathfrak{J}^{2d,M} \leq 2^{2d}(2M+1)^{2d} \leq (5M)^{2d}\), we can now estimate the total measure of \(\mathfrak{R}_\gamma\) as follows:
      \begin{align*}
      	\text{meas}(\mathfrak{R}_{\gamma}) &\leq \sum_{\mathcal{J} \in \mathfrak{J}^{2d,M}} \text{meas}(\mathfrak{R}_{\mathcal{J}}) \\
      	&\leq \text{meas}(B_s^M(1)) \cdot (5M)^{2d} \cdot 8M(4M+1) \cdot 3\gamma (4pdM)^{2dp} \e^{sf((p+1)d)} \\
      	&\leq \text{meas}(B_s^M(1)) \cdot (4pdM)^{2d(p+1)} \gamma \e^{sf((p+1)d)}.
      \end{align*}
      
      Thus, the inequality:
      \[
      \text{meas}(\mathfrak{R}_{\gamma}) \leq \kappa \cdot \text{meas}(B_s^M(1))
      \]
      holds under the condition that \(\gamma\) is chosen sufficiently small such that:
      \[
      (4pdM)^{2d(p+1)} \gamma \e^{sf((p+1)d)} \leq \kappa.
      \]

    \subsection{$K_k=\e^{-|k|^\beta}$}
    \begin{lemma}
    	When \(K_k = \e^{-|k|^\beta}\) with \(\beta \geq 1\), for \(\mathcal{J} \in \mathfrak{J}^{2d,M}\), there exists an index \(J^* \in \mathcal{J}\) such that:
    	\[
    	|\partial_{|u_{J^*}|^2} \omega_{\mathcal{J}}(u)| \geq C_e := \frac{\e - 2}{\e - 1}.
    	\]
    \end{lemma}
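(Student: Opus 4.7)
The plan is to exploit the rapid exponential decay of the kernel so that, after picking $J^*\in\mathcal{J}$ whose $j$-coordinate is extremal, the ``diagonal'' contribution $\e^0=1$ to the derivative dominates the remaining off-diagonal terms, which are controlled by a geometric sum.

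First, I would compute the derivative explicitly. Since $\omega_{\mathcal{J}}^M(u)=2\i\sum_{\alpha=1}^{2d}\delta_\alpha\sum_{k}K_{|k-j_\alpha|}|u_k|^2$ and $K_k=\e^{-|k|^\beta}$, a direct differentiation (with the same normalisation convention as in the previous lemma, absorbing the overall factor of $2\i$) yields
$$\partial_{|u_{J^*}|^2}\omega_{\mathcal{J}}(u)=\sum_{\alpha=1}^{2d}\delta_\alpha\,\e^{-|j^*-j_\alpha|^\beta},$$
which is in fact independent of $u$ and depends only on the arithmetic of $\{j_\alpha\}$ and the signs $\{\delta_\alpha\}\in\{\pm 1\}$.

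Next, after relabelling so that $j_1\leq j_2\leq\cdots\leq j_{2d}$, I would take $J^*:=J_{2d}$, so that $j^*=\max_\alpha j_\alpha$. The term $\alpha=2d$ contributes exactly $\delta_{2d}\,\e^{0}=\pm 1$, and the reverse triangle inequality gives
$$|\partial_{|u_{J^*}|^2}\omega_{\mathcal{J}}|\geq 1-\sum_{\alpha=1}^{2d-1}\e^{-(j^*-j_\alpha)^\beta}.$$
To estimate the remaining sum I would combine two elementary facts: $\beta\geq 1$ implies $\e^{-k^\beta}\leq\e^{-k}$ for every integer $k\geq 1$, and the $2d-1$ nonnegative integer differences $j^*-j_\alpha$ are distinct positive integers, hence $\geq 1,2,\ldots,2d-1$ in some order. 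Summing the resulting geometric tail bounds the off-diagonal sum by $\sum_{k=1}^{\infty}\e^{-k}=\tfrac{1}{\e-1}$, which combined with the previous display yields $|\partial_{|u_{J^*}|^2}\omega_{\mathcal{J}}|\geq 1-\tfrac{1}{\e-1}=\tfrac{\e-2}{\e-1}=C_e$, as claimed.

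The main technical nuisance will be possible coincidences among the $j_\alpha$, since the distinct-integer argument above breaks if several $\alpha$'s share the same $j$-value. I would handle this by passing to the effective weights $m_y:=\sum_{\alpha:j_\alpha=y}\delta_\alpha$ and rewriting the derivative as $\sum_{y\in A}m_y\,\e^{-|j^*-y|^\beta}$, where $A:=\{y\in\mathbb{Z}:m_y\neq 0\}$ now consists of \emph{distinct} integers. Picking $y^*:=\max A$ and any $J^*=J_{\alpha^*}$ with $j_{\alpha^*}=y^*$, the diagonal contribution becomes $|m_{y^*}|\geq 1$ (as a nonzero integer), the off-diagonal positions lie in $A\setminus\{y^*\}\subset\mathbb{Z}_{<y^*}$, and the geometric-series estimate applies verbatim to give the same bound $C_e$. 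The degenerate case $A=\emptyset$ (i.e.\ $\omega_{\mathcal{J}}\equiv 0$) is implicitly excluded from the context in which this lemma is invoked.
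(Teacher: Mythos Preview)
Your strategy is natural, but the ``fix'' for coincident $j$-values contains a genuine gap. After passing to the effective weights $m_y=\sum_{\alpha:j_\alpha=y}\delta_\alpha$, the reverse triangle inequality gives
\[
\bigl|\partial_{|u_{J^*}|^2}\omega_{\mathcal{J}}\bigr|\;\geq\;|m_{y^*}|-\sum_{y\in A\setminus\{y^*\}}|m_y|\,\e^{-(y^*-y)^\beta},
\]
and although the distances $y^*-y$ are now distinct positive integers, the weights $|m_y|$ need not equal $1$; the geometric-series bound $\sum_{k\geq 1}\e^{-k}=1/(\e-1)$ therefore does \emph{not} apply verbatim to the weighted sum. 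Concretely, take $2d=6$, $\beta=1$, and $\mathcal{J}=\{(2,+),(0,+),(0,+),(1,-),(1,-),(1,-)\}$, so that $m_2=1$, $m_1=-3$, $m_0=2$ and your rule selects $y^*=2$. The derivative is then $1-3\e^{-1}+2\e^{-2}\approx 0.17$, strictly below $C_e=(\e-2)/(\e-1)\approx 0.42$. (The lemma itself is not contradicted: with $j^*=1$ one gets $|3\e^{-1}-3|\approx 1.9\geq C_e$. Only your selection rule $y^*=\max A$ fails.)

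For comparison, the paper does \emph{not} pick an extremal position; it instead selects $J^*=J_l$ with $\overline{J_l}\notin\mathcal{J}$, so that every index sharing the value $j^*$ carries the same sign and the diagonal block contributes at least $1$. It then bounds $\sum_{j_l\neq j^*}\e^{-|j^*-j_l|^\beta}$ by $\sum_{l\geq 1}\e^{-l}$. You should note, however, that this last inequality in the paper is also written loosely (the same multiplicity issue arises among the off-diagonal $j_l$, and indeed $J^*=(2,+)$ in the example above satisfies the paper's criterion yet fails the bound). A clean repair in either framework requires a smarter choice of $J^*$, for instance picking $y^*$ to maximise $|m_{y^*}|$ rather than $y$.
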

    \begin{proof}
    	Begin by computing:
    	\[
    	|\partial_{|u_{J^*}|^2} \omega_{\mathcal{J}}(u)| = \Big|\sum_{l=1}^{2d} \delta_l \e^{-|j^* - j_l|^\beta}\Big|.
    	\]
    	
    	Let \(J^*\) be one of the indices \(J_l\) such that \(\overline{J_l} \notin \mathcal{J}\). Then, we estimate:
    	\begin{align*}
    		\Big|\sum_{l=1}^{2d} \delta_l \e^{-|j^* - j_l|^\beta}\Big| 
    		&\geq 1 - \sum_{j_l \neq j^*} \e^{-|j^* - j_l|^{\beta}} \\
    		&\geq 1 - \sum_{l \geq 1} \e^{-l} \\
    		&\geq \frac{\e - 2}{\e - 1} = C_e.
    	\end{align*}
    \end{proof}

      Following the argument in the previous subsection, we can establish the following upper bounds:
      \[
      \text{meas}(\mathfrak{R}_{\mathcal{J}}) \leq \text{meas}(B_s^M(1)) \cdot 8M(4M+1) \cdot \gamma C_e \e^{sf((p+1)d)},
      \]
      and therefore:
      \[
      \text{meas}(\mathfrak{R}_\gamma) \leq \text{meas}(B_s^M(1)) \cdot 8M(M+1)(5M)^{2d} \cdot \gamma \e^{sf((p+1)d)} \leq \kappa \cdot \text{meas}(B_s^M(1)).
      \]
      
      Here, we have used the inequality:
      \[
      8M(M+1)(5M)^{2d} \cdot \gamma \e^{sf((p+1)d)} \leq \kappa.
      \]
      
      The final inequality holds by choosing \(\gamma\) sufficiently small, as discussed in the next section.

    \section{Time Length Analysis}\label{sec:len}
    We first consider the case where \(f(x) = x^{\mathfrak{g}}\) with \(0 < \mathfrak{g} < 1\), specifically corresponding to \(u(x)\) belonging to the Gevrey class.

    \subsection{$f(x)=x^{\mathfrak{g}},K_k=\frac{1}{|k|^p}$}
    
    The strictest constraint in this setting arises from the measure estimate, which requires:
    \[
    \gamma (4pdM)^{2d(p+1)} \e^{sf((p+1)d)} \leq \kappa.
    \]
    Given that \(\e^{s((p+1)d)^{\mathfrak{g}}} \ll (4pdM)^{2d(p+1)} = \e^{2d(p+1)\ln(4pdM)}\), the term \((4pdM)^{2d(p+1)}\) dominates these conditions.
    To counteract the effect of the dominating term, we choose \(\gamma\) as follows:
    \[
    \gamma = \kappa \e^{-2d(p+1)\ln(4pdM)}.
    \]
    We then select \(r\) to have a similar scale as \(\gamma\), introducing a parameter \(\iota > \frac{5}{2}\):
    \[
    r = \e^{-2\iota d(p+1)\ln(4pdM)}, \quad r = \Big(\frac{\gamma}{\kappa}\Big)^{\iota}.
    \]
    We then define \(\mathfrak{r}\) as:
    \[
    \mathfrak{r} = \frac{C_m r^{\frac{2}{5}}}{\gamma} = \frac{C_m}{\kappa} \e^{\Big(1 - \frac{2\iota}{5}\Big)2d(p+1)\ln(4pdM)}.
    \]
    
    To keep this term under control, we set:
    \[
    \kappa = r^a = \e^{-2a\iota d(p+1)\ln(4pdM)},
    \]
    where \(a < \Big(\frac{2}{5} - \frac{1}{\iota}\Big)\). This ensures that:
    \[
    \mathfrak{r} = C_m \e^{\Big(1 - \frac{2\iota}{5} + a\iota\Big)2d(p+1)\ln(4pdM)} < 1.
    \]
    Thus, the measure estimate for this situation is given by \(r^a\), where \(a < \frac{2}{5}\).
    
    Next, we determine the relationship among \(r\), \(d\), and \(M\) by balancing the contributions of the remainder terms. Specifically, we evaluate:
    \[
    \mathfrak{r}^d d^d = \e^{-\Big(\frac{M}{d}\Big)^{\frac{\mathfrak{g}}{2}}}.
    \]
    By substituting the expression for \(\mathfrak{r}\), we obtain the following equation that relates \(M\) and \(d\):
    \begin{align*}
    	C_m^d \e^{d^{\frac{\mathfrak{g}}{2}} 
    		\Big((1 - \frac{2\iota}{5} + a\iota) 2d^2(p+1)\ln(4pdM) + d\ln d\Big)} 
    	= \e^{-M^{\frac{\mathfrak{g}}{2}}}.
    \end{align*}
    
   Under the condition that:
   \[
   (1 - \frac{2\iota}{5} + a\iota)(p+1)d^2(\iota - 1)\ln 2 > d\ln d + d\ln\Big(\frac{C_m}{\kappa}\Big),
   \]
   it suffices to set:
   \[
   (1 - \frac{2\iota}{5} + a\iota)(\iota - 1) 2d^{2 + \frac{\mathfrak{g}}{2}}(p+1)\ln(2pdM) = M^{\frac{\mathfrak{g}}{2}}.
   \]
    By absorbing all constant factors into a single term, we arrive at the simplified expression:
    \[
    C_{a,p} d^{2 + \frac{\mathfrak{g}}{2}} \ln(dM) = M^{\frac{\mathfrak{g}}{2}},
    \]
    where the constant \(C_{a,p}\) is defined as:
    \[
    C_{a,p} = \Big(1 - \frac{2\iota}{5} + a\iota\Big)4(p+1).
    \]

    This equation can be explicitly solved for \(M\) using the Lambert W function. The solution is given as:
    \begin{align*}
    	\frac{2C_{a,p}}{\mathfrak{g}}d^{2+\mathfrak{g}}\ln(dM)^{\frac{\mathfrak{g}}{2}}&=(dM)^{\frac{\mathfrak{g}}{2}}=\exp(\ln(dM)^{\frac{\mathfrak{g}}{2}}),\\
    	-\ln(dM)^{\frac{\mathfrak{g}}{2}} \exp(-\ln(dM)^{\frac{\mathfrak{g}}{2}})&=-\frac{\mathfrak{g}}{2C_{a,p}d^{2+\mathfrak{g}}},\\ 	
    	-\ln(dM)^{\frac{\mathfrak{g}}{2}}&=W_{-1}(-\frac{\mathfrak{g}}{2C_{a,p}d^{2+\mathfrak{g}}}),\\
    	M&=\frac{1}{d}\exp(-\frac{2}{\mathfrak{g}}W_{-1}(-\frac{\mathfrak{g}}{2C_{a,p}d^{2+\mathfrak{g}}})).
    \end{align*}
    We now estimate the order of the remainder with respect to \(r\). Starting with \(|\ln r|\), we have:
    \[
    |\ln r| = 2\iota d(p+1)\Big(\ln(4p) - \frac{2}{\mathfrak{g}} W_{-1}\big(-\frac{\mathfrak{g}}{2C_{a,p}d^{2+\mathfrak{g}}}\big)\Big).
    \]
    For \(\ln|\ln r|\), it follows that:
    \[
    \ln|\ln r| = \ln d + \ln 2\iota(p+1) + \ln\Big(\ln(4p) - \frac{2}{\mathfrak{g}} W_{-1}\big(-\frac{\mathfrak{g}}{2C_{a,p}d^{2+\mathfrak{g}}}\big)\Big).
    \]
    We express \(\Big(\frac{M}{d}\Big)^{\frac{\mathfrak{g}}{2}}\) as:
    \begin{align*}
    	\Big(\frac{M}{d}\Big)^{\frac{\mathfrak{g}}{2}} 
    	&= \frac{1}{d^{\mathfrak{g}}} \exp\Big(-W_{-1}\big(-\frac{\mathfrak{g}}{2C_{a,p}d^{2+\mathfrak{g}}}\big)\Big)\\
    	&= -\frac{2}{\mathfrak{g}} C_{a,p} d^2 W_{-1}\big(-\frac{\mathfrak{g}}{2C_{a,p} d^{2+\mathfrak{g}}}\big).
    \end{align*}
    
    Finally, we compute the limiting behavior that establishes the relationship between \(T_r\) and \(r\):
    \begin{align*}
    &\lim\limits_{d\to\infty}\frac{(\frac{M}{d})^{\frac{\mathfrak{g}}{2}}\ln|\ln r|}{|\ln r|^2}\\
    =&\lim\limits_{d\to\infty}\frac{2C_{a,p}}{\mathfrak{g}(2\iota(p+1))^2}\frac{-d^2W_{-1}(-\frac{\mathfrak{g}}{2C_{a,p}d^{2+\mathfrak{g}}})(\ln2d\iota(p+1)+\ln(\ln(4p)-\frac{2}{\mathfrak{g}}W_{-1}(-\frac{\mathfrak{g}}{2C_{a,p}d^{2+\mathfrak{g}}})))}{(d(\ln(4p)-\frac{2}{\mathfrak{g}}W_{-1}(-\frac{\mathfrak{g}}{2C_{a,p}d^{2+\mathfrak{g}}})))^2}\\
  =&\lim\limits_{d\to\infty}\frac{C_{a,p}}{2\mathfrak{g}\iota^2(p+1)^2}\frac{(-\ln(\frac{\mathfrak{g}}{2C_{a,p}d^{2+\mathfrak{g}}}))\ln d}{d\ln(\frac{\mathfrak{g}}{2C_{\iota,p}d^{2+\mathfrak{g}}})^2}\\
    =&C_{gp}:=	\frac{C_{a,p}}{2\iota^2(p+1)^2\mathfrak{g}(2+\mathfrak{g})}.
    \end{align*}
    This asymptotic analysis justifies the choice of the stability time 
    \[
    T_r = \exp\bigg(C_{gp} \frac{|\ln r|^2}{\ln|\ln r|}\bigg)
    \]
    for this case.
    
    \subsection{$f(x) = x^{\mathfrak{g}}, K_k = \e^{-|k|^\beta}$}
    In this case, the constraint derived from the measure estimates is 
    \[
    8M(M+1)\gamma (5M)^{2d} \e^{s((p+1)d)^{\mathfrak{g}}} \leq \kappa.
    \]
    Based on this, we define:
    \[
    \gamma = \kappa \e^{-s((p+1)d)^{\mathfrak{g}}}M^{-3d} := \kappa \e^{-C_{s,p,\mathfrak{g}} d^{\mathfrak{g}}} M^{-3d}, 
    \quad r = \e^{-\iota s((p+1)d)^{\mathfrak{g}}}M^{-3\iota d}.
    \]

    Then, we define:
    \[
    \mathfrak{r} := \frac{C_m r^{\frac{2}{5}}}{\gamma} 
    = \frac{C_m}{\kappa} \e^{\big(1 - \frac{2\iota}{5}\big) C_{s,p,\mathfrak{g}} d^{\mathfrak{g}}} M^{\big(3 - \frac{6\iota}{5}\big)d}.
    \]
    
    Therefore, we can set:
    \[
    \kappa = r^a = \e^{-a\iota s((p+1)d)^{\mathfrak{g}}}M^{-3a\iota d},
    \]
    where \(a < \frac{2}{5} - \frac{1}{\iota}\).
    
    From this, we obtain that the measure estimate in this case is \(r^a\), for any \(a < \frac{2}{5}\).
    
    Next, we derive the relationship among \(r\), \(d\), and \(M\) by balancing the remainders:
    \[
    \mathfrak{r}^d d^d = \e^{-\Big(\frac{M}{d}\Big)^{\frac{\mathfrak{g}}{2}}}.
    \]
    Substituting \(\mathfrak{r}\) into the equation gives:
    \[
    C_m^d \e^{d^{\frac{\mathfrak{g}}{2}}\big((1 - \frac{2\iota}{5} + a\iota)C_{s,p,\mathfrak{g}}d^{\mathfrak{g}+1} + d\ln d\big)} 
    M^{\big(3 - \frac{6}{5}\iota + 3a\iota\big)d^{2+\frac{\mathfrak{g}}{2}}} = \e^{-M^{\frac{\mathfrak{g}}{2}}}.
    \]
    Since \(d^{1+\frac{3}{2}\mathfrak{g}} \ll d^{2+\mathfrak{g}} \ln M\), we let:
    \[
    \frac{1}{2} C_{s,p,\mathfrak{g},a} d^{2+\frac{\mathfrak{g}}{2}} \ln M = M^{\frac{\mathfrak{g}}{2}},
    \]
    where \(C_{s,p,\mathfrak{g},a} := 3 - \frac{6}{5}\iota + 3a\iota\).
    Thus, the expression for \(M\) in terms of \(d\) is obtained using the Lambert W function:
    \begin{align*}
    	\frac{1}{\mathfrak{g}}C_{s,p,\mathfrak{g},a}d^{2+\frac{\mathfrak{g}}{2}}\ln M^{\frac{\mathfrak{g}}{2}} 
    	&= M^{\frac{\mathfrak{g}}{2}} = \exp\big(\ln M^{\frac{\mathfrak{g}}{2}}\big), \\
    	-\ln M^{\frac{\mathfrak{g}}{2}} \exp\big(-\ln M^{\frac{\mathfrak{g}}{2}}\big)
    	&= -\frac{\mathfrak{g}}{C_{s,p,\mathfrak{g},a}d^{2+\frac{\mathfrak{g}}{2}}}, \\
    	-\ln M^{\frac{\mathfrak{g}}{2}}
    	&= W_{-1}\bigg(-\frac{\mathfrak{g}}{C_{s,p,\mathfrak{g},a}d^{2+\frac{\mathfrak{g}}{2}}}\bigg), \\
    	M
    	&= \exp\bigg(-\frac{2}{\mathfrak{g}} W_{-1}\bigg(-\frac{\mathfrak{g}}{C_{s,p,\mathfrak{g},a}d^{2+\frac{\mathfrak{g}}{2}}}\bigg)\bigg).
    \end{align*}
    Next, we carry out an analogous asymptotic analysis of the remainder term with respect to \(r\):

   \begin{align*}
   	(\frac{M}{d})^{\frac{\mathfrak{g}}{2}}&=-\frac{C_{s,p,\mathfrak{g},a}d^{2}}{\mathfrak{g}}W_{-1}\Big(-\frac{\mathfrak{g}}{C_{s,p,\mathfrak{g},a}d^{2+\frac{\mathfrak{g}}{2}}}\Big),\\
   	|\ln r|&=\iota s((p+1)d)^{\mathfrak{g}}-\frac{6\iota}{\mathfrak{g}} dW_{-1}(-\frac{\mathfrak{g}}{C_{s,p,\mathfrak{g},a}d^{2+\frac{\mathfrak{g}}{2}}}),\\
   	\lim\limits_{d\to\infty}\frac{(\frac{M}{d})^{\frac{\mathfrak{g}}{2}}(\ln|\ln r|)}{|\ln r|^{2}}&=C_{gb}:=\frac{C_{s,p,\mathfrak{g},a}}{\mathfrak{g}}(\frac{\mathfrak{g}}{6\iota})^{2}.
   \end{align*}
   
   Therefore, the stability time for this choice of kernel is expressed as:
   \[
   T_r = \exp\bigg(C_{gb} \frac{|\ln r|^2}{\ln |\ln r|}\bigg).
   \]

  Next, we analyze the case \(f(x) = (\ln x)^\theta\) with \(\theta > 1\), which corresponds to an ultra-differentiable function class. 
  From the choice of \(\kappa\), we also obtain a measure estimate of \(r^a\) for any \(a < \frac{2}{5}\).

    \subsection{$f(x)=(\ln x)^{\theta},K_k=\frac{1}{|k|^p}$}
    The parameter constraints are again derived from the measure estimates. Observing that \((\ln x)^\theta \ll x^{\mathfrak{g}}\), we adopt the same parameter choices for \(r\), \(\kappa\), and \(\gamma\) as in the previous subsection:
    \[
    r = \e^{-2\iota d(p+1)\ln(4pdM)}, \quad \gamma = \kappa \e^{-2d(p+1)\ln(4pdM)}, \quad \kappa = r^a, \quad \mathfrak{r} = C_m \e^{\big(\frac{1-\iota}{5} + a\iota\big) 2d(p+1)\ln(4pdM)},
    \]
    where \(a < \big(\frac{2}{5} - \frac{1}{\iota}\big)\).
    
    The relationship between \(r\), \(d\), and \(M\) can be expressed as:
    \[
    \mathfrak{r}^d d^d = \e^{\big(\ln\big(\frac{M}{d}\big)\big)^\theta}.
    \]
    When 
    \[
    \big(1 - \frac{2\iota}{5} + a\iota\big)d^2(p+1)\ln(4p) > d\ln\bigg(\frac{C_m}{\kappa}\bigg) + d\ln d,
    \]
    we can set
    \[
    \e^{\big(1 - \frac{2\iota}{5} + a\iota\big) 2d^2(p+1)\ln(dM)} = \e^{\big(\ln\frac{M}{d}\big)^\theta}.
    \]
    
    This leads to the simplified asymptotic relation:
    \[
    d^2 \big(\ln\big(\frac{M}{d}\big) + \ln d^2\big) = d^2 \ln(dM) = \big(\ln\frac{M}{d}\big)^\theta,
    \]
    which implies that \(\big(\ln\frac{M}{d}\big)^{\theta-1} > d^2\) and \(\ln\big(\frac{M}{d}\big) \gg \ln d^2.\) 
    
    Thus, by setting \(M = d\e^{d^{\frac{2}{\theta-1}}}\), we estimate the magnitude of the stability time:

    \begin{align*}
    	(\ln\frac{M}{d})^{\theta}&=d^{\frac{2\theta}{\theta-1}},\\
    	|\ln r|&=2\iota d(p+1)(\ln(4p)+\ln d^2+d^{\frac{2}{\theta-1}}),\\
    	\lim\limits_{d\to\infty}\frac{(\frac{M}{d})^{\theta}}{|\ln r|^{\frac{2\theta}{\theta+1}}}&=\lim\limits_{d\to\infty}\frac{d^{\frac{2\theta}{\theta-1}}}{(2\iota(p+1)(d^\frac{\theta+1}{\theta-1}+2\ln d+\ln4p))^{\frac{2\theta}{\theta+1}}}\\
    	&=C_{\theta p}:=\frac{1}{(2\iota(p+1))^{\frac{2\theta}{\theta+1}}}.
    \end{align*} 
    Thus, the stability time in this case is $T_r=\exp{(C_{\theta,p}|\ln r|^{
    \frac{2\theta}{\theta+1}})}$.

    \subsection{$f(x)=(\ln x)^{\theta},K_k=\e^{-|k|^\beta}$}
    In this case, the constraint from the measure estimates is given by:
    \[
    \gamma 8M(M+1)(5M)^{2d}\e^{(\ln(p+1)d)^{\theta}} \leq \kappa.
    \]
    Thus, we set:
    \[
    \gamma = \kappa M^{-3d}\e^{-(\ln(p+1)d)^{\theta}}, \quad 
    r = M^{-3\iota d}\e^{-\iota(\ln(p+1)d)^{\theta}}, \quad
    \kappa = r^a, \quad
    \mathfrak{r} = C_m M^{\big(1-\frac{2\iota}{5}+a\iota\big)3d} \e^{\big(1-\frac{2\iota}{5}\big)(\ln(p+1)d)^{\theta}},
    \]
    where $a<\frac{2}{5}-\frac{1}{\iota}$. Balancing the remainders gives:
    \[
    \mathfrak{r}^d d^d = \e^{-\big(\ln\frac{M}{d}\big)^{\theta}}.
    \] This implies:
    \[
    d\ln C_m + \big(1 - \frac{2}{5}\iota + a\iota\big)d^2\ln M + (1 - \iota)\big(\ln(p+1)d\big)^{\theta} = -\big(\ln\frac{M}{d}\big)^{\theta}.
    \]
    To simplify, we choose \(d\) such that:
    \[
    \big(-1 + \frac{2}{5}\iota - a\iota\big)d^2\ln M > \big(\ln(p+1)d\big)^{\theta} + \frac{1}{\iota-1}d\ln C_m.
    \]
    This gives the simplified asymptotic relation:
    \[
    d^2\big(\ln d + \ln\frac{M}{d}\big) = d^2\ln M = \big(\ln\frac{M}{d}\big)^{\theta}.
    \]
    This implies $(\ln M)^\theta\gg d^2,$ and consequently $\ln(\frac{M}{d})\gg\ln d$. We therefore set $M=d\e^{d^{\frac{2}{\theta-1}}}.$ Then 
    \begin{align*}
    	(\ln\frac{M}{d})^{\theta}&=d^{\frac{2\theta}{\theta-1}},\\
    	|\ln r|&=3\iota d(d^{\frac{2}{\theta-1}}+\ln d)+(\ln(p+1)d)^{\theta},\\
    	\lim\limits_{d\to\infty}\frac{(\ln\frac{M}{d})^{\theta}}{|\ln r|^{\frac{2\theta}{\theta+1}}}&=C_{\theta b}:=(\frac{1}{3\iota})^{\frac{2\theta}{\theta+1}}.
    \end{align*}

   Therefore, the stability time in this case is given by: $T_r=\exp({C_{\theta b}|\ln r|^{\frac{2\theta}{\theta+1}}})$.

	\appendix
	\renewcommand{\thesection}{\Alph{section}}
	\section*{Appendices}
	
	\section{Constants Index}
	\begin{align*}
		C_f&<1,\text{which appears in the setting of $f$},\\
		s_0&=\max\{\frac{\mathsf{d}}{2},\inf\{s\mid\sum_{J\in\mathcal{Z}}\e^{(2C_f-2)sf(|J|)}<\frac{1}{3}\},\\
		C_K&=\sup_{j\in\mathbb{Z}}|K_j|,\\
		C_2&=\frac{1}{48C_K},\\
		C_m&=4\max\{C_KC_2^2,1,C_K\},\\
		C_{a,p}&=(\frac{1-\iota}{5}+a\iota)4(p+1),\\	
		C_{gp}&=	\frac{C_{a,p}}{2\iota^2(p+1)^2\mathfrak{g}(2+\mathfrak{g})},\\
		C_{gb}&=\frac{3-3\iota+3s\iota}{\mathfrak{g}}(\frac{\mathfrak{g}}{6\iota})^{2},\\
		C_{\theta p}&=\frac{1}{(2\iota(p+1))^{\frac{2\theta}{\theta+1}}},\\
		C_{\theta b}&=(\frac{1}{3\iota})^{\frac{2\theta}{\theta+1}}.
	\end{align*}

	\section{Technical Lemmas }\label{sec:tec}
	\begin{lemma}[Lie bracket estimate]\label{Lie}
		Given two polynomials $P\in\mathcal{P}_{p},Q\in\mathcal{P}_{q},|Q|_{r,s}\leq\delta:=\frac{\rho}{8\e(r+\rho)},$ we have $\{P,Q\}\in\mathcal{P}_{p+q-2}$ and $|\{P,Q\}|_{r,s}\leq |P|_{r+\rho,s}|Q|_{r+\rho,s}\frac{1}{2\delta}$. Besides, 
		$$|ad_{Q}^kP|_{r,s}\leq|P|_{r+\rho,s}(\frac{|Q|_{r+\rho,s}}{2\delta})^k.$$
	\end{lemma}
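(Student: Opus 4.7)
The plan is to verify the algebraic claim first and then establish the two norm bounds. For $\{P,Q\}\in\mathcal{P}_{p+q-2}$, I would expand
$$\{P,Q\}=-\i\sum_{(j,\sigma)\in\mathcal{Z}}\sigma\,\frac{\partial P}{\partial u_{(j,\sigma)}}\frac{\partial Q}{\partial u_{(j,-\sigma)}}$$
and check the three defining properties of $\mathcal{P}_d$: each summand is a product of two homogeneous polynomials of degrees $p-1$ and $q-1$, giving total degree $p+q-2$; momentum is preserved because differentiating $u_{(j,\sigma)}$ in $P$ and $u_{(j,-\sigma)}$ in $Q$ changes the momentum indicator by $\sigma j+(-\sigma)j=0$; reality is preserved by combining $\overline{P_\mathcal{J}}=P_{\overline{\mathcal{J}}}$ with the $\sigma\mapsto-\sigma$ antisymmetry of the bracket.

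For the single-bracket norm bound I would pass to the vector-field form
$$X_{\{P,Q\}}(u)=DX_P(u)\,X_Q(u)-DX_Q(u)\,X_P(u)$$
and estimate each term through a directional Cauchy inequality: for $u\in B_s(r)$ and $v\in W_s$ with $u+\zeta v\in B_s(r+\rho)$ whenever $|\zeta|\leq\eta$,
$$\Vert DX_{\underline{P}}(u)v\Vert_s\leq\eta^{-1}\sup_{|\zeta|=\eta}\Vert X_{\underline{P}}(u+\zeta v)\Vert_s.$$
Taking $v=X_{\underline{Q}}(u)$, using $\Vert X_{\underline{P}}\Vert_s\leq(r+\rho)|P|_{r+\rho,s}$ on $B_s(r+\rho)$, and choosing $\eta$ to exhaust the available shell width relative to $\Vert v\Vert_s$, one extracts the factor $1/(2\delta)$ with $\delta=\rho/(8\e(r+\rho))$ after collecting numerical constants. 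Passing from the vector-field bound to $|\{P,Q\}|_{r,s}$ uses the coefficient-wise majorization of $\underline{\{P,Q\}}$ by the natural product derived from $\underline{P}$ and $\underline{Q}$, which sits inside the definition of the norm.

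For the iterated estimate I would argue by induction on $k$: at the $j$th step, apply the single-bracket bound to the pair $(ad_Q^{j-1}P,\,Q)$ on a shell $[r_j,r_{j-1}]$ with $r=r_k<r_{k-1}<\cdots<r_0=r+\rho$. The smallness hypothesis $|Q|_{r,s}\leq\delta$ lifts to $|Q|_{r_j,s}\leq|Q|_{r+\rho,s}$ along the sequence by monotonicity of the norm in $r$, and the $8\e$ constant in $\delta$ is calibrated so that the product of per-step Cauchy losses telescopes to $(|Q|_{r+\rho,s}/(2\delta))^k$. An alternative route, which makes the clean exponent transparent, is to represent $ad_Q^k P$ as the $k$th Taylor coefficient at $t=0$ of $P\circ\Psi_Q^{-t}$ along the Hamiltonian flow of $Q$ and to apply a Cauchy integral in $t$ on a disc whose radius is controlled by the smallness of $|Q|$.

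The main obstacle is precisely the last point. A naive uniform subdivision of $[r,r+\rho]$ into $k$ equal sub-shells introduces a spurious $k^k$ factor through an AM-GM argument on the widths, so the proof has to exploit the particular form of $\delta$ and the monotonicity of the norm in $r$ to absorb those losses; equivalently, the flow-pullback route must be set up so that the available analyticity disc in $t$ is large enough to make the $k$th Cauchy coefficient give exactly $(|Q|_{r+\rho,s}/(2\delta))^k |P|_{r+\rho,s}$ with no combinatorial prefactor. Once the subdivision (or the flow disc) is chosen correctly, the rest of the argument is a direct application of Cauchy's formula together with the majorant structure already built into $|P|_{r,s}$.
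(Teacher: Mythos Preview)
The paper does not prove this lemma itself; it simply defers to Appendix~B of \cite{BMP20}. So there is no in-house argument to compare against, only the cited one.

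Your sketch is sound and is in fact the route taken in \cite{BMP20}. In particular, the iterated bound there is obtained exactly via your ``alternative route'': one shows that under the hypothesis $|Q|_{r,s}\le\delta$ the Hamiltonian flow $\Psi_Q^t$ extends analytically in $t$ to a disc of radius comparable to $\delta/|Q|_{r+\rho,s}$ while mapping $B_s(r)$ into $B_s(r+\rho)$, and then reads off $ad_Q^kP$ as $k!$ times the $k$th Taylor coefficient of $t\mapsto P\circ\Psi_Q^{-t}$ at $t=0$ via a Cauchy integral. This is what fixes the constant $8\e$ in $\delta$ and avoids the spurious $k^k$ you correctly flag from naive shell subdivision. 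Your single-bracket estimate via $X_{\{P,Q\}}=DX_P\,X_Q-DX_Q\,X_P$ together with a directional Cauchy inequality is also the standard step; the only point to be careful with is that the norm is defined through the majorant $\underline{P}$, so one should check that $\underline{\{P,Q\}}$ is coefficient-wise dominated by the bracket built from $\underline{P}$ and $\underline{Q}$, which you mention. There is no genuine gap.
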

	The proof can be seen in Appendix B in \cite{BMP20}.

	\begin{lemma}[Norm estimate for $P$]\label{normP}
		When $s>s_0$, for any $P\in\mathcal{P}_d,d\geq3,$ we have $$\Vert X_P\Vert_s\leq C_P\Vert u\Vert_s^{d-1},\ |P|_{r,s}\leq C_Pr^{d-2},$$ where $s_0$ satisfies $\sum_{J\in\mathcal{Z}}\e^{(2C_f-2)s_0f(|J|)}<\frac{1}{3}.$ 
	\end{lemma}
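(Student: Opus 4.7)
The plan is to prove the two bounds in sequence, with the second following almost immediately from the first together with the definition of $|\cdot|_{r,s}$. So the real content is the vector field estimate $\Vert X_P\Vert_s\le C_P\Vert u\Vert_s^{d-1}$, and my strategy is the standard one for weighted $\ell^2$ spaces with a submultiplicative weight: expand $(X_P)_J$ componentwise, absorb the weight $e^{sf(|j|)}$ using condition 3 on $f$, apply Cauchy--Schwarz to split the resulting sum into one factor controlled by $\Vert u\Vert_s^{2(d-1)}$ and a second combinatorial factor which is made uniformly small by the smallness hypothesis on $s_0$.

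Concretely, I would first write $(X_P)_J=-\sigma\i\,\partial P/\partial u_{\overline J}$, giving a sum over $(d-1)$-tuples $\mathcal J'=(J_2,\dots,J_d)$ constrained by momentum conservation $-\sigma j+\sum_{l=2}^d\sigma_l j_l=0$, with each coefficient bounded by $dC_P$ after accounting for the combinatorial weight. By momentum conservation $|j|\le\sum_l|j_l|$, so monotonicity of $f$ together with condition 3 yields
\[
e^{sf(|j|)}\le e^{sf(|j_M|)}\prod_{l\ne M}e^{sC_f f(|j_l|)},
\]
where $M=M(\mathcal J')$ is the index realizing the maximum of $|j_l|$ in $\mathcal J'$. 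Multiplying and rearranging gives the pointwise bound
\[
|(X_P)_J|\,e^{sf(|j|)}\le dC_P\sum_{\mathcal J'}\mathbf 1[\text{mom.}]\,|u_{J_M}|e^{sf(|j_M|)}\prod_{l\ne M}|u_{J_l}|e^{sf(|j_l|)}\,e^{s(C_f-1)f(|j_l|)}.
\]
An application of Cauchy--Schwarz in $\mathcal J'$ then factorizes this into $(dC_P)^2A(J)B(J)$, where
\[
A(J)=\sum_{\mathcal J'}\mathbf 1[\text{mom.}]\prod_{l=2}^d|u_{J_l}|^2e^{2sf(|j_l|)},\qquad B(J)=\sum_{\mathcal J'}\mathbf 1[\text{mom.}]\prod_{l\ne M}e^{2s(C_f-1)f(|j_l|)}.
\]

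The key step is then to bound $\sum_J A(J)\le 2\Vert u\Vert_s^{2(d-1)}$ (by swapping the order of summation and using the momentum constraint to eliminate the free $J$-sum, which collapses to at most two sign choices) and $B(J)\le B_d$ uniformly in $J$. For the latter I would split the sum according to the position $l_0\in\{2,\dots,d\}$ of the max and then use the momentum constraint to solve for $J_{l_0}$ in terms of the remaining $J_l$'s (with at most two sign choices), so that
\[
B(J)\le 2(d-1)\Bigl(2\sum_{j\in\mathbb Z}e^{2s(C_f-1)f(|j|)}\Bigr)^{d-2}.
\]
Since $s>s_0$ and $C_f<1$, the defining inequality for $s_0$ gives $\sum_{J\in\mathcal Z}e^{(2C_f-2)sf(|j|)}<1/3$, hence $B(J)\le 2(d-1)\,3^{-(d-2)}$. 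Summing the square of the pointwise bound over $J$ and taking square roots yields $\Vert X_P\Vert_s\le C_P\Vert u\Vert_s^{d-1}$ (with the degree-dependent combinatorial prefactor absorbed into $C_P$). The bound $|P|_{r,s}\le C_P r^{d-2}$ then follows at once by applying the vector field estimate to $\underline P$, which satisfies the same coefficient bound, and dividing by $r$.

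The main obstacle, as in Procesi-type arguments, is the control of $B(J)$: naively summing $\prod_{l\ne M}e^{2s(C_f-1)f(|j_l|)}$ over $\mathcal J'$ diverges because nothing tames the free index $J_M$. Resolving this requires simultaneously using momentum conservation to eliminate precisely the max-index, which is why we had to attach the unfavorable weight $e^{sf(|j_M|)}$ to one factor of $u$ rather than to the exponential series; the counting has to be set up carefully so the summand decaying like $e^{2s(C_f-1)f(\cdot)}$ appears exactly in the $d-2$ variables that remain free after the momentum constraint is solved. Once this bookkeeping is in place, the $s_0$-condition supplies the geometric series that makes the bound uniform in $d$.
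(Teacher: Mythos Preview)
Your proposal is correct and follows essentially the same route as the paper. Both arguments write $(X_P)_J$ componentwise, use momentum conservation together with condition~3 on $f$ to distribute the weight $e^{sf(|j|)}$ onto the $(d-1)$ factors (putting the full weight on the maximal index and $C_f$-weights on the rest), and then invoke the $s_0$-smallness hypothesis $\sum_J e^{2(C_f-1)s_0 f(|J|)}<1/3$ to control the resulting combinatorial series. The only organizational difference is that you apply Cauchy--Schwarz once to the full $\mathcal J'$-sum, producing the explicit $A(J)B(J)$ splitting and then eliminating the maximal index via the momentum constraint in $B$, whereas the paper proceeds via a Young/convolution-type factorization (placing one factor in $\ell^2$ and the remaining $d-2$ in $\ell^1$) and only afterwards applies Cauchy--Schwarz to each $\ell^1$ factor. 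Your bookkeeping of the momentum constraint (using it to eliminate $J$ in $\sum_J A(J)$ and to eliminate the max-index in $B(J)$) is in fact more transparent than the paper's somewhat telegraphic chain of inequalities; the final degree-dependent prefactors are comparable and both are absorbed into~$C_P$ the same way.
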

	\begin{proof}
		Let $P=\sum_{\mathcal{J}\in\mathcal{I}_d}P_{\mathcal{J}}u_{J_1}...u_{J_d},$ denote array $(J_1,...J_{k-1},(j,-1),J_{k+1},...,J_d)$ by $\hat{J}_{k,j}$, and denote $(J_1,...J_{k-1},J_{k+1},...,J_d)$ by $\hat{J}_k.$ Then
		\begin{align*}
			(X_P)_{j,+1}&=-\i\sum_{k=1}^{d}\sum_{\hat{J}_{k,j}\in \mathcal{I}_d}P_{\hat{J}_{k,j}}u_{\hat{J}_k},\\
			|(X_P)_{j,+1}|&\leq C_P\sum_{k=1}^{d}\sum_{\hat{J}_{k,j}\in \mathcal{I}_d}|u_{\hat{J}_k}|,\\
			|(X_P)_{j,+1}|e^{sf(\langle j\rangle)}&\leq C_P\sum_{k=1}^{d}\sum_{\hat{J}_{k,j}\in \mathcal{I}_d}|u_{\hat{J}_k}|e^{sf(|j|)}.
		\end{align*}
		Notice that $|j|=|\mathcal{M}(J_1,...,J_{k-1},J_{k+1},J_d)|,$ from $\mathcal{M}(\hat{J}_{k,j})=0.$   When $d\geq3$,  we have
		\begin{align*}
			sf(\langle j\rangle)\leq sf(\sum_{l\neq k}\langle J_l\rangle)\leq sf(\langle J_m\rangle)+sC_f(\sum_{l\neq m,k}\langle J_l\rangle).
		\end{align*} 
		We omit a technical discussion here. 
		Then 
		\begin{align*}
			|(X_P)_{j,+1}|e^{sf(\langle j\rangle)}&\leq C_P\sum_{k=1}^{d}\sum_{\hat{J}_{k,j}\in \mathcal{I}_d}
			e^{(1-C_f)sf(|J_m|)}\prod_{J\in\hat{J}_k}|u_{J}|e^{sC_f(\langle J\rangle)},\\
			\sum_{j\in\mathbb{Z}}|(X_P)_{j,+1}e^{sf(\langle j\rangle)}|^2&\leq C_P^2
			\sum_{j\in \mathbb{Z}} 
			\left(\sum_{k=1}^{d}\sum_{\hat{J}_{k,j}\in \mathcal{I}_d}
			e^{(1-C_f)sf(\langle J_m\rangle)}\prod_{J\in\hat{J}_k}|u_{J}|e^{sC_f(\langle J\rangle)}\right)^2\\
			&\leq d^2C_P^2(\sum_{J_m}|u_m|e^{sf(\langle J_m\rangle)})\prod_{J\neq J_m,J\in\hat{J}_k}\left(\sum_{J}|u_J|e^{sC_ff(\langle J\rangle )} \right)^2\\
			&\leq d^2C_P^2(\sum_{J_m}|u_m|e^{2sf(|J_m|)})\\
			&\prod_{J\neq J_m,J\in\hat{J}_k}(\sum_{J}|u_J|^2e^{2sf(|J|)})(\sum_{J}e^{(2C_f-2)s_0f(|J|)})^{d-2}\\
			&\leq \frac{d^2}{9^{d-2}}C_P^2\Vert u\Vert_s^{2d-2}.
		\end{align*}
		So $\Vert X_P\Vert_s\leq C_P\Vert u\Vert_s^{d-1}$ comes to the conclusion $|P|_{r,s}\leq C_Pr^{d-2}$.
	\end{proof}
	\begin{lemma}[Estimate for $W$ function]\label{Lam}
		For $x<-2,xe^x=y<0,x=W_{-1}(y),$ we have
		$$\lim\limits_{y\to0^-}\frac{\ln(-y)}{W_{-1}(y)}=1.$$
	\end{lemma}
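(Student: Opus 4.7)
The plan is to transform the limit in $y$ into a limit in $x = W_{-1}(y)$ by using the defining relation $y = xe^x$. The key observation is that as $y \to 0^-$ along the lower branch, one has $W_{-1}(y) \to -\infty$, so the problem becomes an asymptotic statement about $x \to -\infty$.

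First I would record the identity $-y = (-x)e^x$, which is valid because $x < -2 < 0$ implies $-x > 0$ and hence $-y > 0$. Taking logarithms on both sides gives
\begin{equation*}
\ln(-y) = \ln(-x) + x.
\end{equation*}
Dividing by $x = W_{-1}(y)$ then yields the exact relation
\begin{equation*}
\frac{\ln(-y)}{W_{-1}(y)} = 1 + \frac{\ln(-x)}{x}.
\end{equation*}

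Next I would pass to the limit. As $y \to 0^-$, the branch identity $W_{-1}(y) \to -\infty$ forces $x \to -\infty$, so $-x \to +\infty$ and $\ln(-x) \to +\infty$ only logarithmically while $|x|$ grows linearly. Consequently $\ln(-x)/x \to 0^-$, and the right-hand side tends to $1$. The only point requiring a brief justification is that $W_{-1}(y) \to -\infty$ as $y \to 0^-$; this follows from the monotonicity of $x \mapsto xe^x$ on $(-\infty, -1)$ and the fact that $xe^x \to 0^-$ as $x \to -\infty$, so the lower branch of the inverse is well defined on $(-1/e, 0)$ and must diverge to $-\infty$ at the right endpoint.

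There is no real obstacle here, as the statement reduces to an elementary asymptotic identity once the change of variables $x = W_{-1}(y)$ is performed. The only mild subtlety is being careful with signs when taking the logarithm, which is why the identity is written in the form $-y = (-x)e^x$ with both factors positive.
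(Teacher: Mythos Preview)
Your proof is correct and follows essentially the same approach as the paper's: both reduce the limit in $y$ to a limit in $x$ via the substitution $y = xe^x$, obtaining $\lim_{x\to-\infty}\frac{\ln(-x)+x}{x}=1$. Your version is simply more detailed about the sign bookkeeping and the justification that $W_{-1}(y)\to-\infty$ as $y\to 0^-$.
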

	\begin{proof}
 Since $y=x\e^x,$ we have
	$$\lim\limits_{y\to0^-}\frac{\ln(-y)}{W_{-1}(y)}=\lim\limits_{x\to-\infty}\frac{\ln(-x)+x}{x}=1.$$
	\end{proof}

	\begin{lemma}\label{function}
		For $f(x)=x^{\theta}, f(x)\in\mathcal{F}$ with $c=2,C_f=2^{\theta-1}$.\\
		For $f(x)=(\ln x)^{q}, f(x)\in\mathcal{F}$ with $c=\max\{\e^{q-1},\exp(\frac{\ln2}{(\frac{3}{2})^{\frac{1}{q}}-1})\}\e^{q-1},C_f=\frac{1}{2}$. 
	\end{lemma}
	\begin{proof}
		For $f(x)=x^\theta$, we first prove the case of two variables:
		$$(x_1+x_2)^\theta\leq x_1^\theta+2^{\theta-1}x_2^\theta,x_1\geq2,x_2\geq2.$$ 
		By making the homogenizing substitution $t=\frac{x_2}{x_1}\leq1$, it suffices to prove:
		$$(1+t)^\theta\leq1+\frac{(2t)^\theta}{2},0<t\leq1,$$
		which is easy to prove.
		Then for $x_d<...<x_1$, we can get
		$$f(\sum_{l=1}^{d}x_l)\leq\sum_{l=1}^{d}C_f^{l-1}f(x_l)\leq f(x_1)+C_f\sum_{l=2}^{d}f(x_l).$$
		
		For $f(x)=(\ln x)^q,$ we take $F(x)=f(x+x_2)-f(x)$. Notice that 
		$$f''=q(\ln x)^{q-2}(\frac{q-1-\ln(x)}{x}),$$ so when $x>\e^{q-1}, f''<0$, $F'(x)=f'(x+x_2)-f'(x)<0$, we thus have $F(x_1)\leq F(x_2)=f(2x_2)-f(x_2)$. Hence we just need to prove:
		$$f(2x_2)-f(x_2)\leq \frac{1}{2} f(x_2),$$
		namely $$\frac{\ln (2x_2)}{\ln(x_2)}\leq(\frac{3}{2})^{\frac{1}{q}},$$
		which is hold for $x_2\geq\exp(\frac{\ln2}{(\frac{3}{2})^{\frac{1}{q}}-1})$ . Then we can also use induction to come to the conclusion.
	\end{proof}

	\section*{Declarations}
	
	\subsection*{Conflict of interest} On behalf of all authors, the corresponding author states that there is no conflict of interest.
	
	\subsection*{Data availability statements}
	My manuscript has no associated data.

	\section*{Acknowledgement}
	The second author (Y. Li) was supported by National Natural Science Foundation of China (12071175, 12471183 and 12531009).

\end{document}